\documentclass[12pt,letterpaper,final,twoside,leqno]{amsart}
\usepackage{eucal, mathrsfs}
\usepackage{amsmath,amssymb,amsthm,
amscd,amsopn}
\usepackage{hyperref}
\usepackage{times}
\usepackage{xspace}
\usepackage{tikz}
\usepackage{epsfig,epic,eepic,latexsym,color}
\usepackage[all]{xy}
\usepackage{paralist}
\usepackage{stmaryrd}
\usepackage{enumitem}
\usepackage{color}
\usepackage{soul}
\usepackage{mathtools}
\usepackage{rotating}

\catcode`~=11 \def\UrlSpecials{\do\~{\kern -.15em\lower .7ex\hbox{~}\kern .04em}} \catcode`~=13

\newcommand{\urlwofont}[1]{\urlstyle{same}\url{#1}}

\binoppenalty=10000
\relpenalty=10000
\newcommand{\widepagestyle}{
\voffset=0in
\hoffset=0in
\marginparwidth=0.7in
\oddsidemargin=0in
\evensidemargin=0in
\textwidth=6.5in
\textheight=8.5in
\topmargin=0in
\headheight=0in
\headsep=0.2in
\footskip=0.5in
}

\newcounter{are-there-sections}
\setcounter{are-there-sections}{1}

\DeclareMathAlphabet{\smallchanc}{OT1}{pzc}%
                                 {m}{it}
\DeclareFontFamily{OT1}{pzc}{}
\DeclareFontShape{OT1}{pzc}{m}{it}%
             {<-> s * [1.100] pzcmi7t}{}
\DeclareMathAlphabet{\mathchanc}{OT1}{pzc}%
                                 {m}{it}


\newcommand{\mcH}{\mathchanc{H}}


\newcommand{\mcm}{\mathchanc{m}}

\newcommand{\mco}{\mathchanc{o}}


\newcommand{\osX}{\overline{\sX}}

\newcommand{\tf}{\tilde{f}}
\newcommand{\tg}{\tilde{g}}

\DeclareFontFamily{OMS}{rsfs}{\skewchar\font'60}
\DeclareFontShape{OMS}{rsfs}{m}{n}{<-5>rsfs5 <5-7>rsfs7 <7->rsfs10 }{}
\DeclareSymbolFont{rsfs}{OMS}{rsfs}{m}{n}
\DeclareSymbolFontAlphabet{\scr}{rsfs}


\newcommand{\sA}{\scr{A}}
\newcommand{\sB}{\scr{B}}
\newcommand{\sC}{\scr{C}}

\newcommand{\sE}{\scr{E}}
\newcommand{\sF}{\scr{F}}
\newcommand{\sG}{\scr{G}}

\newcommand{\sI}{\scr{I}}

\newcommand{\sK}{\scr{K}}
\newcommand{\sL}{\scr{L}}

\newcommand{\sO}{\scr{O}}

\newcommand{\sT}{\scr{T}}

\newcommand{\sX}{\scr{X}}
\newcommand{\sY}{\scr{Y}}
\newcommand{\sZ}{\scr{Z}}



\newcommand{\bG}{\mathbb{G}}

\newcommand{\bL}{\mathbb{L}}

\newcommand{\bQ}{\mathbb{Q}}

\newcommand{\bZ}{\mathbb{Z}}

\newcommand{\fF}{\mathfrak{F}}

\newcommand{\fM}{\mathfrak{M}}

\newcommand{\fT}{\mathfrak{T}}

\newcommand{\fX}{\mathfrak{X}}
\newcommand{\fY}{\mathfrak{Y}}

\newcommand{\ofM}{\overline{\mathfrak{M}}}
\newcommand{\ofU}{\overline{\mathfrak{U}}}

\newcommand{\ff}{\mathfrak{f}}

\newcommand{\ug}{\underline{g}}
\newcommand{\uh}{\underline{h}}
\newcommand{\um}{\underline{m}}

\newcommand{\uX}{\underline{X}}

\newcommand{\usX}{\underline{\sX}}



\newcommand{\ol}{\overline}



\newcommand{\Def}{\mathfrak Def}

\newcommand{\Sch}{\mathfrak Sch}




\DeclareMathOperator{\Art}{{Art}}

\DeclareMathOperator{\qc}{{qc}}

\DeclareMathOperator{\Diff}{Diff}

\DeclareMathOperator{\codim}{codim}

\DeclareMathOperator{\Ker}{{Ker}}

\DeclareMathOperator{\depth}{{depth}}

\DeclareMathOperator{\Hom}{Hom}

\newcommand{\sHom}[0]{{\mcH\mco\mcm}}

\DeclareMathOperator{\im}{{im}}

\DeclareMathOperator{\Isom}{Isom}

\DeclareMathOperator{\Gor}{{Gor}}
 
\DeclareMathOperator{\Proj}{{Proj}}

\DeclareMathOperator{\red}{red}

\DeclareMathOperator{\rk}{{rk}}

\DeclareMathOperator{\Spec}{{Spec}}
\DeclareMathOperator{\supp}{{supp}}
\DeclareMathOperator{\Supp}{{Supp}}


\newcommand{\factor}[2]{\left. \raise 2pt\hbox{\ensuremath{#1}} \right/
        \hskip -2pt\raise -2pt\hbox{\ensuremath{#2}}}

\def\coh#1.#2.#3.{H^{#1}(#2,#3)}
\def\dimcoh#1.#2.#3.{h^{#1}(#2,#3)}
\def\hypcoh#1.#2.#3.{\mathbb H_{\vphantom{l}}^{#1}(#2,#3)}
\def\loccoh#1.#2.#3.#4.{H^{#1}_{#2}(#3,#4)}
\def\dimloccoh#1.#2.#3.#4.{h^{#1}_{#2}(#3,#4)}
\def\lochypcoh#1.#2.#3.#4.{\mathbb H^{#1}_{#2}(#3,#4)}
\def\ses#1.#2.#3.{0  \longrightarrow  #1   \longrightarrow 
 #2 \longrightarrow #3 \longrightarrow 0} 
\def\sesshort#1.#2.#3.{0
 \rightarrow #1 \rightarrow #2 \rightarrow #3 \rightarrow 0}
\def\dist#1.#2.#3.{  #1   \longrightarrow 
 #2 \longrightarrow #3 \stackrel{+1}{\longrightarrow} } 
\def\CDdist#1.#2.#3.{  #1   @>>>  #2  @>>>   #3 @>+1>> }  
\def\shortses#1.#2.#3.{0  \rightarrow  #1   \rightarrow 
 #2  \rightarrow   #3 \rightarrow  0}
\def\shortdist#1.#2.#3.{  #1   \rightarrow 
 #2  \rightarrow   #3 \stackrel{+1}{\rightarrow} }  
\def\ddist#1.#2.#3.#4.#5.#6.{\CD
#1 @>>> #2 @>>> #3 @>+1>> \\
@VVV @VVV @VVV \\
#4 @>>> #5 @>>> #6 @>+1>> 
\endCD}
\def\ddistun#1.#2.#3.#4.#5.#6.{\CD
#1 @>>> #2 @>>> #3 @>+1>> \\
@. @VVV @VVV  \\
#4 @>>> #5 @>>> #6 @>+1>> 
\endCD}
\def\Iff#1#2#3{
\hfil\hbox{\hsize =#1
\vtop{\noin #2}
\hskip.5cm 
\lower.5\baselineskip\hbox{$\Leftrightarrow$}\hskip.5cm
\vtop{\noin #3}}\hfil\medskip}
\newcommand{\union}\cup
\newcommand{\intersect}\cap
\newcommand{\Union}\bigcup
\newcommand{\Intersect}\bigcap
\def\myoplus#1.#2.{\underset #1 \to {\overset #2 \to \oplus}}



\newcommand{\of}{\overline{f}}

\newcommand{\ox}{\overline{x}}
\newcommand{\oy}{\overline{y}}

\newcommand{\oD}{\overline{D}}

\newcommand{\oF}{\overline{F}}

\newcommand{\oS}{\overline{S}}

\newcommand{\oX}{\overline{X}}
\newcommand{\oY}{\overline{Y}}

\makeatletter

\renewcommand\subsection{
  \renewcommand{\sfdefault}{pag}
  \@startsection{subsection}%
  {2}{0pt}{-\baselineskip}{.2\baselineskip}{\raggedright
    \sffamily\itshape\small\bfseries
  }}
\renewcommand\section{
  \renewcommand{\sfdefault}{phv}
  \@startsection{section} %
  {1}{0pt}{\baselineskip}{.2\baselineskip}{\centering
    \sffamily
    \scshape
    \bfseries
}}
\newcounter{lastyear}\setcounter{lastyear}{\the\year}
\addtocounter{lastyear}{-1}


\newcommand\noin{\noindent}

\newcommand\input /home/kovacs/tex/latex/{\input /home/kovacs/tex/latex/} 



\newtheoremstyle{bozont}{8pt}{10pt}%
     {\itshape}
     {}
     {\bfseries}
     {.}
     {.5em}
     {\thmname{#1}\thmnumber{ #2}\thmnote{ \rm #3}}
\newtheoremstyle{bozont-sf}{3pt}{3pt}%
     {\itshape}
     {}
     {\sffamily}
     {.}
     {.5em}
     {\thmname{#1}\thmnumber{ #2}\thmnote{ \rm #3}}
\newtheoremstyle{bozont-sc}{3pt}{3pt}%
     {\itshape}
     {}
     {\scshape}
     {.}
     {.5em}
     {\thmname{#1}\thmnumber{ #2}\thmnote{ \rm #3}}
\newtheoremstyle{bozont-remark}{8pt}{15pt}%
     {}
     {}
     {\scshape}
     {}
     {.5em}
     {\thmname{#1}\thmnumber{ #2.}\thmnote{\ \  ( #3)}}
\newtheoremstyle{bozont-def}{8pt}{15pt}%
     {}
     {}
     {\bfseries}
     {.}
     {.5em}
     {\thmname{#1}\thmnumber{ #2}\thmnote{ \rm #3}}
\newtheoremstyle{bozont-reverse}{3pt}{3pt}%
     {\itshape}
     {}
     {\bfseries}
     {.}
     {.5em}
     {\thmnumber{#2.}\thmname{ #1}\thmnote{ \rm #3}}
\newtheoremstyle{bozont-reverse-sc}{3pt}{3pt}%
     {\itshape}
     {}
     {\scshape}
     {.}
     {.5em}
     {\thmnumber{#2.}\thmname{ #1}\thmnote{ \rm #3}}
\newtheoremstyle{bozont-reverse-sf}{3pt}{3pt}%
     {\itshape}
     {}
     {\sffamily}
     {.}
     {.5em}
     {\thmnumber{#2.}\thmname{ #1}\thmnote{ \rm #3}}
\newtheoremstyle{bozont-remark-reverse}{3pt}{3pt}%
     {}
     {}
     {\sc}
     {.}
     {.5em}
     {\thmnumber{#2.}\thmname{ #1}\thmnote{ \rm #3}}
\newtheoremstyle{bozont-def-reverse}{3pt}{3pt}%
     {}
     {}
     {\bfseries}
     {.}
     {.5em}
     {\thmnumber{#2.}\thmname{ #1}\thmnote{ \rm #3}}
\newtheoremstyle{bozont-def-newnum-reverse}{3pt}{3pt}%
     {}
     {}
     {\bfseries}
     {}
     {.5em}
     {\thmnumber{#2.}\thmname{ #1}\thmnote{ \rm #3}}
\theoremstyle{bozont}    
\ifnum \value{are-there-sections}=0 {%
  \newtheorem{proclaim}{Theorem}
} 
\else {%
  \newtheorem{proclaim}{Theorem}[section]
} 
\fi
\newtheorem{thm}[proclaim]{Theorem}

\newtheorem{cor}[proclaim]{Corollary} 
 
\newtheorem{lem}[proclaim]{Lemma} 
\newtheorem{prop}[proclaim]{Proposition}

\theoremstyle{bozont-sc}
\newtheorem{proclaim-special}[proclaim]{\specialthmname}

\theoremstyle{bozont-remark}

\newtheorem{rem}[proclaim]{Remark}


\newtheorem*{SubHeading*}{\SubHeadingName}%
\newtheorem{SubHeading}[proclaim]{\SubHeadingName}
\newtheorem{sSubHeading}[equation]{\sSubHeadingName}
\newenvironment{demo-r}[1]{\def\SubHeadingName{#1}\begin{SubHeading-r}}
  {\end{SubHeading-r}}%
\newenvironment{subdemo-r}[1]{\def\sSubHeadingName{#1}\begin{sSubHeading-r}}
  {\end{sSubHeading-r}} %
\newenvironment{demo*}[1]{\def\SubHeadingName{#1}\begin{SubHeading*}}
  {\end{SubHeading*}}%




\newtheorem{defn-thm}[proclaim]{Definition--Theorem}  

\theoremstyle{bozont-def}    
\newtheorem{defn}[proclaim]{Definition}
\newtheorem{notation}[proclaim]{Notation} 

\theoremstyle{bozont-reverse}    

\theoremstyle{bozont-reverse-sc}
\newtheorem{proclaimr-special}[proclaim]{\specialthmname}
{\def\specialthmname{#1}\begin{proclaimr-special}}%
{\end{proclaimr-special}}
\theoremstyle{bozont-remark-reverse}

\newtheorem{SubHeading-r}[proclaim]{\SubHeadingName}
\newtheorem{sSubHeading-r}[equation]{\sSubHeadingName}
\newtheorem{SubHeadingr}[proclaim]{\SubHeadingName}

\theoremstyle{bozont-def-newnum-reverse}    

\theoremstyle{bozont-def-reverse}

\newtheorem{newnumspecial}[proclaim]{\specialnewnumname}

\numberwithin{equation}{proclaim}

\numberwithin{figure}{section}


\newenvironment{enumerate-p}{
  \begin{enumerate}}
  {\setcounter{equation}{\value{enumi}}\end{enumerate}}
\newenvironment{enumerate-cont}{
  \begin{enumerate}
    {\setcounter{enumi}{\value{equation}}}}
  {\setcounter{equation}{\value{enumi}}
  \end{enumerate}}



\newlength{\swidth}
\setlength{\swidth}{\textwidth}
\addtolength{\swidth}{-,5\parindent}


\makeatother

\widepagestyle

\setcounter{tocdepth}{1}

\address{Zsolt Patakfalvi, Princeton University, Department of Mathematics, Fine Hall, Washington Road,
Princeton, NJ-08544-1000, USA
}
\email{pzs@math.princeton.edu}
\urladdr{http://www.math.princeton.edu/\~{}pzs}

\newtheorem*{thm_vanishing_log_canonical}{Theorem \ref{thm:vanishing_log_canonical}}

\title{Fibered stable varieties}
\author{ Zsolt Patakfalvi}

\begin{document}

\begin{abstract}
We show that if a stable variety (in the sense of Koll\'ar and Shepherd-Barron) admits a fibration with stable fibers and base, then this fibration structure deforms (uniquely) for all small deformations. During our proof we obtain  a Bogomolov-Sommese type vanishing for vector bundles and reflexive differential $n-1$-forms as well. 
\end{abstract}

\maketitle

\tableofcontents

\section{Introduction}
\label{sec:introduction}

The moduli space $\ofM_h$ of stable varieties (or equivalently of semi-log canonical models) with Hilbert polynomial $h$  is the natural generalization of the widely investigated space $\ofM_g$ of stable curves of genus $g$ \cite{Kollar_Moduli_of_varieties_of_general_type}, \cite{Kollar_Shepher_Barron_Threefolds_and_deformations}, \cite{Kollar_Projectivity_of_complete_moduli}. It parametrizes (possibly reducible) varieties with semi-log canonical singularities and ample canonical bundle. In \cite{Bhatt_Ho_Patakfalvi_Schnell_Moduli_of_products_of_stable_varieties} connected components containing products of stable varieties were described very precisely. It turned out that if a stable variety admits a product structure, then so do all its deformations. Instead of having a product structure, one can look at the weaker condition: having a fibration structure with stable fibers and base.  Then the fibration structure does not extend to all deformations as a product structure, because of certain monodromy issues 
in the limit at infinity \cite{Abramovich_Vistoli_Compactifying_the_space_of_stable_maps}. However, according to the main result of the paper, the fibration structure does extend to small deformations. 


\begin{thm}
\label{thm:vague}
Let $k$ be an algebraically closed field of characteristic zero. If a stable variety $X$ admits a fibration structure $f \colon X \to Y$  with stable fibers and base, then
\begin{enumerate}
\item \label{itm:vague:non_Q_Gorenstein} For every deformation $X'$ of $X$ over an Artinian local $k$-algebra $A$ there is a unique deformation of $f \colon X \to Y$ over $A$ of the form $f' \colon X' \to Y'$ such that $Y'_A \cong Y$ and $f'_A = f$ via this isomorphism.
\item  \label{itm:vague:Q_Gorenstein} If further the above deformation $X' \to \Spec A$ is a stable deformation then  both $f'$ and $Y' \to \Spec A$ are stable families. Here stable family means that it also satisfies Koll\'ar's condition, that is, the reflexive powers of the relative canonical sheaves commute with base change.
\end{enumerate}
\end{thm}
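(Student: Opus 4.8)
The plan is to prove part~(1) by exhibiting the fibration $f$ as the datum of an intrinsic subsheaf of the reflexive differentials of $X$, proving that this subsheaf is rigid and deforms uniquely along every deformation of $X$, and reconstructing a fibration from the deformed subsheaf; part~(2) will then be extracted from a base-change analysis of reflexive powers of relative canonical sheaves. Since $A$ is Artinian, everything can be done one infinitesimal step at a time, along small extensions $0\to I\to A\to A_0\to 0$ with $I\cong k$, so only first-order (tangent) and second-order (obstruction) information is needed.

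Because $Y$ is a stable variety, $\omega_Y$ is ample, and the differential of $f$ produces a saturated subsheaf $\sF\subseteq\Omega_X^{[1]}$ of rank $d=\dim Y$ which agrees with $f^{*}\Omega_Y$ wherever $f$ is smooth; its reflexive determinant $\det\sF$ then agrees in codimension one with $f^{*}\omega_Y$, hence is big with $\kappa(\det\sF)=d$. This realizes $f$ as the equality case of a Bogomolov--Sommese type vanishing: Theorem~\ref{thm:vanishing_log_canonical} bounds the Iitaka dimension of a line subsheaf of $\Omega_X^{[p]}$ on an slc variety by $p$, and its higher-rank (vector-bundle) form does the same for $\det$ of a reflexive subsheaf of $\Omega_X^{[1]}$; in this borderline situation the vanishing degenerates and pins down $\sF$, so that $Y$ is recovered from the pair $(X,\sF)$ as $\Proj\bigoplus_{m\ge 0}H^0\big(X,(\det\sF)^{[m]}\big)$ (using that $Y$ is its own canonical model) and $f$ as the associated morphism. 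The reflexive $(n-1)$-forms mentioned in the abstract enter here indirectly: by Serre duality the deformation and obstruction groups of $X$ and of $f$ are expressible through $\Omega_X^{[n-1]}$, and a Bogomolov--Sommese vanishing for those is what controls them.

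Given a deformation $X'\to\Spec A$ of $X$, the reflexive differentials $\Omega_{X'/A}^{[1]}$ restrict to $\Omega_X^{[1]}$ (this uses the base-change results for reflexive differentials on slc varieties), and I would lift $\sF$ to a saturated subsheaf $\sF'\subseteq\Omega_{X'/A}^{[1]}$: over a small extension the obstruction lies in $\Ext^1_X\big(\sF,\Omega_X^{[1]}/\sF\big)$ and, when it vanishes, the lifts form a torsor under $\Hom_X\big(\sF,\Omega_X^{[1]}/\sF\big)$. Identifying $\Omega_X^{[1]}/\sF$ up to torsion with $\Omega_{X/Y}^{[1]}$ and $\sF$ with $f^{*}\Omega_Y$, these groups are computed from $f^{*}T_Y$ and $\Omega_{X/Y}^{[1]}$; Theorem~\ref{thm:vanishing_hom_from_cotangent} gives the vanishing of the $\Hom$, hence uniqueness of $\sF'$, and combined with the semi-negativity Theorem~\ref{thm:semi_negative} --- which supplies the positivity of the direct images $f_*\omega_{X/Y}^{[m]}$ and of their twists needed to kill the obstruction --- it also gives the vanishing of the $\Ext^1$, hence existence. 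Then put $Y'=\Proj_{A}\bigoplus_m (f')_*(\det\sF')^{[m]}$, with $f'\colon X'\to Y'$ defined by $(\det\sF')^{[m]}$ for $m\gg 0$; over the closed point this is $f^{*}\omega_Y$, which is ample on $Y$, so $Y'\to\Spec A$ is flat with closed fibre the stable variety $Y$ and $f'_A=f$. Any two deformations of $f$ over $A$ induce, via uniqueness of $\sF'$, the same subsheaf, hence are isomorphic, and the isomorphism is unique because $\Hom_X(f^{*}\Omega_Y,\cO_X)=H^0(X,f^{*}T_Y)=H^0(Y,T_Y)=0$ for the stable variety $Y$, the last equality from $f_*\cO_X=\cO_Y$ and finiteness of $\Aut(Y)$.

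For part~(2), suppose $X'\to\Spec A$ is a stable family, so $\omega_{X'/A}^{[m]}$ is flat over $A$ with formation commuting with base change for all $m$. Reflexive adjunction along $f'$ gives $\omega_{X'/A}^{[m]}\cong\omega_{X'/Y'}^{[m]}\otimes(f')^{*}\omega_{Y'/A}^{[m]}$; restricting to fibres of $f'$ (which are stable varieties) and using the fibrewise criterion shows $\omega_{X'/Y'}^{[m]}$ commutes with base change over $Y'$, and then applying $(f')_*$ and $f'_*\cO_{X'}=\cO_{Y'}$ transfers the base-change property to $\omega_{Y'/A}^{[m]}$; fibrewise ampleness of $\omega_{X'/Y'}$ and ampleness of $\omega_{Y'/A}$ are immediate, so $f'$ and $Y'\to\Spec A$ are stable families. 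The main obstacle is the cohomological input and its interplay, i.e. Theorems~\ref{thm:semi_negative}, \ref{thm:vanishing_hom_from_cotangent} and \ref{thm:vanishing_log_canonical}: one needs a workable theory of reflexive differentials, semi-negativity of Kodaira--Spencer kernels, and a Bogomolov--Sommese vanishing with its degenerate case and a higher-rank version, all on a variety that is only semi-log canonical --- hence possibly reducible and non-normal --- rather than klt. A secondary difficulty is carrying the reconstruction of $Y'$ through a family over the Artinian base: controlling base change of reflexive differentials and of the section algebra of $(\det\sF')^{[m]}$, and verifying flatness and the slc property of $Y'$, since the smooth-case integration of foliations is unavailable.
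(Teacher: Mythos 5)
Your proposal takes a genuinely different route from the paper, but both halves have gaps that I do not think can be repaired along the lines you sketch. The paper does not encode $f$ as a subsheaf of $\Omega^{[1]}_X$ at all: for part~(\ref{itm:vague:non_Q_Gorenstein}) it proves that $R^1f_*\sO_X$ is an anti-nef vector bundle (Proposition~\ref{thm:semi_negative}), deduces $\Hom_Y(\Omega_Y,R^1f_*\sO_X)=0$ from Theorem~\ref{thm:vanishing_hom_from_cotangent}, and feeds this into the Horikawa-type criteria of \cite[Propositions 3.9 and 3.10]{Bhatt_Ho_Patakfalvi_Schnell_Moduli_of_products_of_stable_varieties}, which directly give the unique extension of $f$ along any flat deformation of $X$. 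Your replacement groups are not controlled by the theorems you cite: with your identifications, $\Hom_X(\sF,\Omega^{[1]}_X/\sF)\cong H^0\bigl(Y, T_Y\otimes f_*\Omega^{[1]}_{X/Y}\bigr)$, and $f_*\Omega^{[1]}_{X/Y}$ is \emph{not} anti-nef (already for a family of stable curves it contains the nef, generically positive sheaf $f_*\omega_{X/Y}$), so Theorem~\ref{thm:vanishing_hom_from_cotangent} does not apply to it; similarly Theorem~\ref{thm:semi_negative} says nothing about $\Ext^1_X(\sF,\Omega^{[1]}_X/\sF)$. Beyond that, the reconstruction of $Y'$ and $f'$ from a deformed subsheaf is an algebraic-integrability statement for a ``foliation'' on a non-normal, possibly reducible slc scheme over an Artinian base; you assert it as a degenerate case of Bogomolov--Sommese, but no such statement is proved in the paper or supplied by you, and this is a serious missing ingredient rather than a routine step.

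For part~(\ref{itm:vague:Q_Gorenstein}) the gap is more structural. The entire point of Section~\ref{sec:deformation} of the paper is that Koll\'ar's condition is not a fibrewise condition: a flat family with stable fibers need not satisfy it, and the only handle on it is the passage to index-one covering stacks (Theorem~\ref{thm:tower_equivalence}, Propositions~\ref{prop:two_index_ones}, \ref{prop:equivalence}, \ref{prop:conclusion}) together with the stacky vanishing $\Hom_{\sY}(\Omega_{\sY},R^1\tg_*\sO_{\sX})=0$ of Lemma~\ref{lem:vanishing_from_schemes_to_stacks}, which needs its own argument because $\sX$ is not the index-one cover of $X$. Your base-change sketch starts from the adjunction isomorphism $\omega_{X'/A}^{[m]}\cong\omega_{X'/Y'}^{[m]}\otimes(f')^{*}\omega_{Y'/A}^{[m]}$; in the paper this is Lemma~\ref{lem:relative_dualizing_reflexive}, proved only for fibrations already known to satisfy Koll\'ar's condition on both levels (it needs flatness and relative $S_2$-ness of $\omega_{X'/Y'}^{[m]}$ over $A$, which is essentially what you are trying to establish), so the argument is circular. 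Likewise the appeal to a ``fibrewise criterion'' for base change of $\omega_{X'/Y'}^{[m]}$ begs the question. To fix part~(\ref{itm:vague:Q_Gorenstein}) you would have to reproduce something equivalent to the index-one-cover comparison of deformation functors.
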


Point \eqref{itm:vague:Q_Gorenstein} of Theorem \ref{thm:vague} is equivalent to the following moduli theoretic statement. To state it, fix three $\bZ \to \bZ$ functions  $h_1$, $h_2$ and $h$, and consider the following pseudo-functor $\fF \fM_{(h_1, h_2), h}$: for a test $k$-scheme $B$, $\fF \fM_{(h_1, h_2), h}(B)$  consists of fibrations $ X \to Y \to B$, where both $Y \to B$ and $X \to Y$ are stable families and  the Hilbert functions of $Y \to B$, $X \to Y$ and of $X \to B$ are $h_1$, $h_2$ and $h$, respectively.   We will prove that $\fF \fM_{(h_1,h_2), h}$ is a DM-stack locally of finite type over $k$.  Furthermore, there is a forgetful map $F \colon \fF \fM_{(h_1,h_2), h} \to \ofM_h$ obtained by forgetting the fibration structure of $X$. Then the equivalent rewording of point \ref{itm:vague:Q_Gorenstein} of Theorem \ref{thm:vague} is:

\begin{thm}
\label{thm:main}
The forgetful morphism $F \colon \fF \fM_{(h_1,h_2), h} \to \ofM_h$ is \'etale.
\end{thm}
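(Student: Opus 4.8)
The plan is to deduce Theorem~\ref{thm:main} from Theorem~\ref{thm:vague} via the infinitesimal criterion for étaleness, so the genuine work is already contained in Theorem~\ref{thm:vague} (and, beneath it, in the Bogomolov--Sommese type vanishing controlling the deformation theory of the fibration). First I would record the ambient facts: $\fF \fM_{(h_1,h_2), h}$ is a Deligne--Mumford stack locally of finite type over $k$ and $\ofM_h$ is as well, so $F$ is a morphism of algebraic stacks locally of finite presentation over $k$; and $F$ is well defined because a fibration $X \to Y \to B$ whose pieces $Y \to B$ and $X \to Y$ are stable families has $X \to B$ a stable family with the prescribed Hilbert function. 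Since $k$ is algebraically closed, it then suffices to verify that $F$ is formally étale on square-zero extensions $A \twoheadrightarrow A_0$ of Artinian local $k$-algebras with residue field $k$: the non-étale locus is closed, a nonempty closed substack locally of finite type over $k$ contains a $k$-point, and at $k$-points formal étaleness unwinds — by dévissage of the kernel — into exactly such lifting problems.

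So fix such an extension $A \twoheadrightarrow A_0$, an object $\xi_0 = (X_0 \to Y_0 \to \Spec A_0)$ of $\fF \fM_{(h_1,h_2), h}(A_0)$, an object $\eta = (X \to \Spec A)$ of $\ofM_h(A)$, and an identification $X_0 \cong X \times_A A_0$ compatible with $F(\xi_0)$; I must produce a lift $\xi = (X \to Y \to \Spec A)$ in $\fF \fM_{(h_1,h_2), h}(A)$, unique up to unique isomorphism. When $A_0 = k$ this is precisely Theorem~\ref{thm:vague}: part~\eqref{itm:vague:non_Q_Gorenstein}, applied to the flat deformation $X$ of $X_0$, provides the unique flat deformation $f \colon X \to Y$ of $X_0 \to Y_0$ with the prescribed restriction, and part~\eqref{itm:vague:Q_Gorenstein}, applied to the stable deformation $\eta$, guarantees that $f$ and $Y \to \Spec A$ are stable families, so $\xi := (X \to Y \to \Spec A)$ is a genuine object of $\fF \fM_{(h_1,h_2), h}(A)$. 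For a general Artinian $A_0$ I would rerun this with $\Spec k$ replaced by $\Spec A_0$ — the proof of Theorem~\ref{thm:vague} uses only that the base is the spectrum of an Artinian local $k$-algebra — obtaining the unique flat fibration over $\Spec A$ restricting to $\xi_0$, and part~\eqref{itm:vague:Q_Gorenstein} again promotes it to a stable family because $\eta$ is one (or, less elegantly, induct on $\length_k A_0$).

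For unramifiedness of the diagonal: an automorphism of such a lift $\xi$ inducing the identity on $\xi \times_A A_0$ and lying over $\id_\eta$ is in particular an automorphism of the flat family $f \colon X \to Y$ over $\Spec A$ that restricts to the identity over $\Spec A_0$, hence is the identity by the uniqueness-up-to-unique-isomorphism in Theorem~\ref{thm:vague}\eqref{itm:vague:non_Q_Gorenstein}; thus lifts exist and are unique up to unique isomorphism, so $F$ is formally smooth and formally unramified, hence formally étale, and — being locally of finite presentation — étale. Granting Theorem~\ref{thm:vague}, the deduction is essentially formal; the two points that genuinely need attention are (a) the relative form of Theorem~\ref{thm:vague} over a non-reduced Artinian base, which I expect to come out of its proof with no change, and (b) the well-definedness of $F$, namely that a fibration whose base and fibres are stable families (satisfying Koll\'ar's condition) is itself a stable family — the one place where ``fibered stable varieties'' must really behave like stable varieties.
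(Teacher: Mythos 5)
Your formal scaffolding is fine: reducing \'etaleness of $F$ to a lifting property along square-zero extensions of Artinian local $k$-algebras, and getting unramifiedness from uniqueness of the lift, is exactly the kind of d\'evissage the paper runs inside Proposition~\ref{prop:equivalence} and Proposition~\ref{prop:conclusion}. The problem is that your reduction target, Theorem~\ref{thm:vague}\eqref{itm:vague:Q_Gorenstein}, is not an independent input: the introduction states that point~\eqref{itm:vague:Q_Gorenstein} \emph{is} Theorem~\ref{thm:main} in moduli-theoretic language, and the paper proves the two simultaneously (the proof environment is literally ``Proof of Theorem~\ref{thm:main} and equivalently of point~\eqref{itm:vague:Q_Gorenstein} of Theorem~\ref{thm:vague}''). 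So what you have written is a careful verification of the asserted equivalence, not a proof of either statement; relative to the paper's logical structure it is circular.

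The genuine gap is hiding in the sentence ``part~\eqref{itm:vague:Q_Gorenstein} again promotes it to a stable family because $\eta$ is one.'' That promotion is the entire content of the theorem. Deformations of $X$ as an object of $\ofM_h$ are not flat deformations of the scheme $X$ but flat deformations of its index-one covering stack (Theorem~\ref{thm:abramovich_hassett}), so point~\eqref{itm:vague:non_Q_Gorenstein} --- which does have an independent proof via $\Hom_Y(\Omega_Y, R^1f_*\sO_X)=0$ on the schemes themselves --- only hands you a flat deformation $f'\colon X'\to Y'$, with no control over Koll\'ar's condition for $Y'\to\Spec A$ or for $f'$. The paper's actual route is: Proposition~\ref{prop:conclusion} reduces \'etaleness of $F$ to the vanishing $\Hom_{\sY}(\Omega_{\sY}, R^1\tg_*\sO_{\sX})=0$ for the \emph{relative} index-one cover $(\sX\to\sY\to B)$, using the equivalence of $\fF\fM_{\um}$ with admissible fibrations of stable stacks (Theorem~\ref{thm:tower_equivalence}), the comparison between the relative and absolute index-one covers of $X$ (Proposition~\ref{prop:two_index_ones}), and Proposition~\ref{prop:equivalence}; Lemma~\ref{lem:vanishing_from_schemes_to_stacks} then deduces that vanishing from the anti-nefness of $R^1g_*\sO_X$ (Proposition~\ref{thm:semi_negative}) and the Bogomolov--Sommese type vanishing (Theorem~\ref{thm:vanishing_hom_from_cotangent}). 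None of this stack-level work --- admissibility, Lemma~\ref{lem:admissible_open}, the fact that $\sX$ is not the index-one cover of $X$ --- appears in your proposal, and without it the lift you construct has no reason to land in $\fF\fM_{(h_1,h_2),h}(A)$.
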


An immediate consequence of Theorem \ref{thm:main} is as follows.

\begin{cor}
\label{cor:dense_image}
 The image of $F \colon \fF \fM_{(h_1,h_2), h} \to \ofM_h$ is dense in every component it
intersects. 
\end{cor}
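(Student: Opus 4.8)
The plan is to read this off directly from Theorem~\ref{thm:main} via the elementary fact that the image of an \'etale morphism is open. First I would note that an \'etale morphism of Deligne--Mumford stacks is in particular smooth, and that smooth morphisms of algebraic stacks are open. This can be checked after passing to an \'etale atlas $U \to \ofM_h$ by a scheme — such an atlas exists since $\ofM_h$ is a DM-stack locally of finite type over $k$ — where one is reduced to the classical statement that a morphism of schemes which is flat and locally of finite presentation is open. Applying this to the base change $\fF\fM_{(h_1,h_2),h}\times_{\ofM_h} U \to U$, which is \'etale by Theorem~\ref{thm:main}, one concludes that $F\bigl(\fF\fM_{(h_1,h_2),h}\bigr)$ is an open substack of $\ofM_h$, i.e.\ its preimage in $U$ is open.

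Next, let $Z \subseteq \ofM_h$ be an irreducible component with $Z \cap F\bigl(\fF\fM_{(h_1,h_2),h}\bigr) \neq \varnothing$. Since $F\bigl(\fF\fM_{(h_1,h_2),h}\bigr)$ is open, the intersection $Z \cap F\bigl(\fF\fM_{(h_1,h_2),h}\bigr)$ is a nonempty open subset of $Z$; and a nonempty open subset of an irreducible topological space is dense. Hence $F\bigl(\fF\fM_{(h_1,h_2),h}\bigr)$ is dense in $Z$, and as $Z$ was an arbitrary component meeting the image, the corollary follows.

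There is no real obstacle here: the statement is a formal consequence of Theorem~\ref{thm:main}. The only points worth making explicit are that one should work on an \'etale atlas to give meaning to ``open image'' and ``irreducible component'' for the stack $\ofM_h$, and that $\ofM_h$ being locally Noetherian (it is locally of finite type over $k$) ensures the usual good behaviour of its decomposition into irreducible components.
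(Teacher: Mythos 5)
Your proof is correct and follows exactly the route the paper intends: the corollary is stated there as an immediate consequence of Theorem~\ref{thm:main}, with no written proof, and the implicit argument is precisely yours — \'etale morphisms are open (flat and locally of finite presentation), so the image is an open subset meeting the component in a nonempty, hence dense, open subset. Your explicit reduction to an \'etale atlas is a reasonable way to make the topological statements precise for the stack $\ofM_h$.
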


In the special cases, when $\deg h_1 = \deg h_2 = 1$, i.e., when the fibers and the base of $f$ are curves, a compactification  of each connected component of $\fF  
\fM_{(h_1, h_2), h}$ can be obtained as $ \sK_{g_1,0}^{\mathrm{bal}}(\ofM_{g_2},d) $ for adequate values of $g_1$, $g_2$ and $d$. Here $\ofM_{g_2}$ is the usual space of stable curves with genus $g_2$ and $\sK_{g_1, 0}^{\mathrm{bal}} ( \_, d)$ is the
 Abramovich-Vistoli space of stable maps  \cite{Abramovich_Vistoli_Compactifying_the_space_of_stable_maps}. Note also that $g_1$ and $g_2$ are just the genera given by $h_1$ and $h_2$. On the other hand $d$ depends on the actual connected component of $\fF \fM_{(h_1, h_2), h}$ considered. One can show now  that $F$ extends naturally to a forgetful morphism
$\oF \colon \sK_{g_1,0}^{\mathrm{bal}}(\ofM_{g_2},d) \to \ofM_{h}$ \cite[Notation 7.2]{Patakfalvi_Arakelov_Parshin_rigidity_of_towers_of_curve_fibrations}. Since every component of both $\sK_{g_1,0}^{\mathrm{bal}}(\ofM_{g_2},d)$ and $\ofM_h$ is proper, and the image of $\oF$ is dense in the relevant components according to Corollary \ref{cor:dense_image}, $\oF$ is surjective onto every irreducible component that it intersects. Therefore, the one parameter degenerations of stable surfaces admitting a stable fibration structure are coarse moduli spaces of stacks admitting a twisted stable fibration structure in the sense of Abramovich-Vistoli. 
Note that these observations are crucial for the results of \cite{Patakfalvi_Arakelov_Parshin_rigidity_of_towers_of_curve_fibrations}. To generalize these considerations to higher dimensions it would be necessary to  generalize the Abramovich-Vistoli construction itself to higher dimensions \cite{Abramovich_Vistoli_Compactifying_the_space_of_stable_maps}. Note that Alexeev defined \emph{non-twisted} stable maps from surfaces in \cite{Alexeev_Moduli_spaces_M_g_n_W_for_surfaces}. It would be interesting to extend that to the stack target and possibly to arbitrary dimensional source case.


Note that questions similar to Theorem \ref{thm:vague} have been considered by Catanese, e.g.,  \cite{Catanese_Moduli_and_classification_of_irregular_Kahler_manifolds_with_Albanese_general_type_fibrations,Catanese_Fibered_surfaces_varieties_isogenous_to_a_product_and_related_moduli_spaces}. In  \cite{Catanese_Moduli_and_classification_of_irregular_Kahler_manifolds_with_Albanese_general_type_fibrations} it is shown that fibration structures $f \colon X \to Y$ extend to small deformations if $X$ is smooth, projective and $Y$ is a smooth curve of genus at least two (or generally a variety of maximal Albanese dimension). This is in fact stronger statement than ours in the  $\dim Y=1$ case, since $f$ is allowed to have arbitrarily bad special fibers. One of the main reason for this difference is that the methods of \cite{Catanese_Moduli_and_classification_of_irregular_Kahler_manifolds_with_Albanese_general_type_fibrations} are topological: it is shown that a fibration structure as above is a 
topological 
property. On the other hand our methods are purely deformation theoretic. In particular, our methods not only yield that every nearby variety has a similar fibration structure, but also that for families the fibration structure extends for the whole family after an \'etale base-change. 

Further note that similar deformation theoretic arguments as ours were used in \cite[Theorem 33]{Kollar_Deformations_of_elliptic_Calabi_Yau_manifolds}, which yields considerably more general statement when  $X$  has rational and  $Y$ has canonical singularities. 

%

%

During the proof of Theorem \ref{thm:main}, the following  two vanishing results are obtained, the first of which is implied by the second one. Note that Theorem \ref{thm:vanishing_log_canonical} is a vector bundle version of a  special case of the Bogomolov-Sommese vanishing for reflexive differentials \cite[Theorem 7.2]{Greb_Kebekus_Kovacs_Peternell_Differential_forms_on_log_canonical_spaces}. 

\begin{thm}
\label{thm:vanishing_hom_from_cotangent}
If  $X$ is a  stable variety, and $\sE$ a anti-nef vector bundle on $X$,
then $\Hom_X(\Omega_{X},  \sE)$ $=\Hom_X(\bL_{X},  \sE)=0$ (here $\bL_X$ is the cotangent complex of $X$). 
\end{thm}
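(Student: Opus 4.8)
Here is the plan.

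First, since $\sE$ is a coherent sheaf (in degree $0$), $\bL_X$ is concentrated in non‑positive degrees and $\mathcal{H}^0(\bL_X)\cong\Omega_X$, the truncation triangle $\tau_{\leq-1}\bL_X\to\bL_X\to\Omega_X\xrightarrow{+1}$ gives $\Hom_X(\bL_X,\sE)\cong\Hom_X(\Omega_X,\sE)$. So it suffices to show there is no nonzero $\mathcal{O}_X$‑linear map $\phi\colon\Omega_X\to\sE$; suppose one exists.

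Second, I would reduce to the normal case. Let $\pi\colon\bar X\to X$ be the normalization with reduced conductor $\bar D\subseteq\bar X$; then $(\bar X,\bar D)$ is log canonical, $K_{\bar X}+\bar D\sim_{\bQ}\pi^*K_X$ is ample, and $\pi^*\sE$ is again anti‑nef (the dual of a nef bundle pulled back by a finite morphism is nef). The crucial point is local along the double locus: as $X$ is nodal there, the relation $x\,dy+y\,dx$ that defines $\Omega_X$ at a node $\{xy=0\}$ forces $\phi(dy)\in(y)\sE$ and $\phi(dx)\in(x)\sE$. Hence $\pi^*\phi$ kills the torsion of $\pi^*\Omega_X$; the induced map on $(\pi^*\Omega_X)/(\mathrm{torsion})$ --- which agrees with $\Omega_{\bar X}$ in codimension $\leq1$ --- sends $dt$ into $t\cdot\pi^*\sE$ along each component of $\bar D$ cut out by $t$; and, extending over the codimension‑$\geq2$ singular locus by reflexivity of $\pi^*\sE$, we obtain a nonzero map $\bar\phi\colon\Omega^{[1]}_{\bar X}(\log\bar D)\to\pi^*\sE$. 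Thus I may assume $X$ is normal, write $(X,\Delta)$ for the resulting log canonical pair with $\Delta=\bar D$ reduced and $K_X+\Delta$ ample, and have a nonzero $\bar\phi\colon\Omega^{[1]}_X(\log\Delta)\to\sE$ with $\sE$ anti‑nef (this is a vector‑bundle Bogomolov--Sommese statement on $(X,\Delta)$, in the spirit of Theorem~\ref{thm:vanishing_log_canonical}).

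Third, I would reduce to the rank‑one Bogomolov--Sommese vanishing by taking determinants. Let $r\geq1$ be the generic rank of the image of $\bar\phi$. Then $\wedge^r\bar\phi$ extends by reflexivity to a nonzero map $\Omega^{[r]}_X(\log\Delta)\to\wedge^r\sE$ whose image has rank $1$; saturating this image gives a reflexive rank‑one sheaf $\mathcal{O}_X(L)\subseteq\wedge^r\sE$. Since $\wedge^r\sE$ is anti‑nef, this inclusion defines a rational section of $\mathbb{P}\big((\wedge^r\sE)^\vee\big)\to X$ pulling back the (nef) tautological line bundle to $\mathcal{O}_X(-L)$; resolving this rational map and pushing forward shows that $-L$ is pseudo‑effective. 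On the other hand, dualizing $\Omega^{[r]}_X(\log\Delta)\to\mathcal{O}_X(L)$ and using the perfect pairing $\Omega^{[r]}_X(\log\Delta)\otimes\Omega^{[n-r]}_X(\log\Delta)\to\omega_X(\Delta)$ on the snc locus ($n=\dim X$) produces an injection $\mathcal{O}_X\big(K_X+\Delta-L\big)\hookrightarrow\Omega^{[n-r]}_X(\log\Delta)$.

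Finally, I would apply the Bogomolov--Sommese vanishing for reflexive differentials on log canonical pairs, \cite[Theorem~7.2]{Greb_Kebekus_Kovacs_Peternell_Differential_forms_on_log_canonical_spaces}, to the saturation of $\mathcal{O}_X(K_X+\Delta-L)$ inside $\Omega^{[n-r]}_X(\log\Delta)$: this yields $\kappa(K_X+\Delta-L)\leq n-r$. But $K_X+\Delta-L=(K_X+\Delta)+(-L)$ is the sum of an ample class and a pseudo‑effective class, hence big, so $\kappa(K_X+\Delta-L)=n>n-r$ since $r\geq1$ --- a contradiction. Therefore $\phi=0$. I expect the main obstacle to be the second step: correctly relating $\Omega_X$ on the demi‑normal scheme $X$ to reflexive logarithmic differentials on the normalization, in particular producing a map \emph{out of} $\Omega^{[1]}_{\bar X}(\log\bar D)$ rather than merely out of $\Omega^{[1]}_{\bar X}$ --- it is precisely the admissible log pole along $\bar D$ that contributes the extra $\Delta$ needed to make $K_X+\Delta-L$, rather than the a priori non‑big $K_X-L$, big. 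The other ingredients --- the determinant/duality bookkeeping, pseudo‑effectivity of rank‑one subsheaves of anti‑nef bundles, and \cite[Theorem~7.2]{Greb_Kebekus_Kovacs_Peternell_Differential_forms_on_log_canonical_spaces} --- are essentially formal.
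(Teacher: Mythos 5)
Your first two steps are sound and essentially reproduce the paper's argument: the truncation triangle for $\bL_X$ is the paper's first reduction verbatim, and your local computation at a node (the relation $x\,dy+y\,dx=0$ forcing $\phi(dx)\in(x)\sE$, hence an admissible log pole along the conductor) is precisely the content of Lemma \ref{lem:vector_fields}, which the paper states dually as an inclusion $\sT_X\hookrightarrow\pi_*\sT_{\oX}(-\log\oD)$ and proves via the same \'etale-local model $\Spec A[x,y]/(xy)$. You are also right that the log pole along $\oD$ is exactly what makes the ample class $K_{\oX}+\oD=\pi^*K_X$, rather than $K_{\oX}$ alone, enter the picture; this is how the paper feeds Theorem \ref{thm:vanishing_log_canonical} with the anti-ample $\sL=\omega_{\oX}(\oD)^*$.

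The gap is in your final step. \cite[Theorem 7.2]{Greb_Kebekus_Kovacs_Peternell_Differential_forms_on_log_canonical_spaces} is stated for a \emph{$\bQ$-Cartier} reflexive rank-one subsheaf $\sA\subseteq\Omega^{[p]}_X(\log\lfloor D\rfloor)$, and that hypothesis is used essentially in its proof (it is what allows $\sA$ to be pulled back to a resolution). Your sheaf $\sO_X(K_X+\Delta-L)$, and a fortiori its saturation, has no reason to be $\bQ$-Cartier: $K_X+\Delta$ is, but $L$ is the class of the saturated image of $\wedge^r\bar\phi$ inside $\wedge^r\sE$, over which you have no control. So the theorem you invoke does not apply as stated. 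This is exactly why the paper does not run the determinant argument on $X$ itself: in the proof of Theorem \ref{thm:vanishing_log_canonical} it first reduces to the case where the twisting $\bQ$-line bundle is an honest line bundle (by a cyclic cover), then passes to a log resolution $Y\to X$ using the extension theorem \cite[Theorem 1.5]{Greb_Kebekus_Kovacs_Peternell_Differential_forms_on_log_canonical_spaces} to identify the relevant $H^0$ with a space of honest log forms on $Y$, and only there performs the rank-$r$/wedge/pseudo-effectivity bookkeeping --- where every rank-one reflexive sheaf is a line bundle and the pseudo-effectivity of the subsheaf of $\pi^*\wedge^r\sE$ is immediate from \cite[Lemma 1.4.1]{Viehweg_Weak_positivity} --- concluding with the classical Bogomolov vanishing \cite[Corollary 6.9]{Esnault_Viehweg_Lectures_on_vanishing_theorems}. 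Your argument is repaired by the same move: resolve first, then take determinants; as written, the appeal to the lc Bogomolov--Sommese theorem on the singular $X$ is not justified.
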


\begin{thm}
\label{thm:vanishing_log_canonical}
If $X$ is a  projective variety of dimension $n$, $D \geq 0$ a $\bQ$-divisor on $X$
such that $(X,D)$ is log canonical, $\sL$ an anti-ample $\bQ$-line bundle, $\sE$ an
anti nef vector bundle, then
\begin{equation}
\label{eq:vanishing_log_canonical:vanishing}
 H^0(X, \Omega^{[n-1]}_X (\log \lfloor D \rfloor )  [\otimes] \sL \otimes
\sE ) = 0 .
\end{equation}
\end{thm}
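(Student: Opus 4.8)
The plan is to argue by contradiction, producing a violation of the Bogomolov--Sommese vanishing theorem for reflexive differentials on log canonical pairs, \cite[Theorem 7.2]{Greb_Kebekus_Kovacs_Peternell_Differential_forms_on_log_canonical_spaces}.

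\smallskip

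\noindent\textbf{Reduction to a morphism, and the rank-one case.} First I would reformulate the statement. Since $\sE$ is locally free and $\sL$ (after clearing denominators, or throughout with $\bQ$-divisors) behaves like a line bundle, reflexive $\Hom$--$\otimes$ adjunction turns a nonzero element of $H^0\bigl(X,\Omega^{[n-1]}_X(\log\lfloor D\rfloor)[\otimes]\sL\otimes\sE\bigr)$ into a nonzero sheaf homomorphism $\varphi\colon \sW:=\sL^{-1}\otimes\sE^\vee\longrightarrow \Omega^{[n-1]}_X(\log\lfloor D\rfloor)$. As $\sE^\vee$ is nef and $\sL^{-1}$ is ample, $\sW$ is an ample vector bundle. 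If $\sE$ — hence $\sW$ — has rank one, we are done at once: the image of $\varphi$ saturates to a rank-one reflexive subsheaf $\sA\subseteq\Omega^{[n-1]}_X(\log\lfloor D\rfloor)$ into which the ample line bundle $\sW$ injects, so $\kappa(\sA)\ge\kappa(\sW)=n$, contradicting $\kappa(\sA)\le n-1$ from \cite[Theorem 7.2]{Greb_Kebekus_Kovacs_Peternell_Differential_forms_on_log_canonical_spaces}.

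\smallskip

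\noindent\textbf{Arbitrary rank via the determinant of the image.} For general $\sE$ I would try to reduce to a rank-one situation. Let $\sI:=\im\varphi\subseteq\Omega^{[n-1]}_X(\log\lfloor D\rfloor)$, a torsion-free subsheaf of some rank $r$ with $1\le r\le n$, and let $\det\sI$ be its rank-one reflexive determinant. On the one hand, $\sI$ is a torsion-free quotient of the ample bundle $\sW$ and $\wedge^r\sW$ is again ample, so a standard positivity argument (symmetric powers of an ample bundle become globally generated after an ample twist, hence a rank-one torsion-free image of an ample bundle is big) shows that $\det\sI$ is big. On the other hand, the reflexive exterior-power identity $\wedge^{[r]}\Omega^{[n-1]}_X(\log\lfloor D\rfloor)\cong\Omega^{[n-r]}_X(\log\lfloor D\rfloor)[\otimes]\bigl(\omega_X(\lfloor D\rfloor)\bigr)^{[\otimes(r-1)]}$ — valid since both sheaves are reflexive and agree on the big open locus where $X$ is smooth and $\lfloor D\rfloor$ is snc — exhibits $\det\sI[\otimes]\bigl(\omega_X(\lfloor D\rfloor)\bigr)^{[\otimes -(r-1)]}$ as a rank-one reflexive subsheaf of $\Omega^{[n-r]}_X(\log\lfloor D\rfloor)$, whose Kodaira--Iitaka dimension is $\le n-r$ by \cite[Theorem 7.2]{Greb_Kebekus_Kovacs_Peternell_Differential_forms_on_log_canonical_spaces}. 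For $r=1$ this is already the contradiction.

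\smallskip

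\noindent\textbf{The main obstacle.} The hard part is to reconcile these two facts when $r\ge 2$: the exterior-power identity introduces a twist by $(r-1)\bigl(K_X+\lfloor D\rfloor\bigr)$, which is itself big in the cases that matter (e.g.\ when $X$ is a stable variety), so bigness of $\det\sI$ alone is not yet a contradiction. One therefore has to extract more from the positivity of $\sI$ than bigness of its determinant — essentially that the whole subsheaf $\sI$ is ample inside $\Omega^{[n-1]}_X(\log\lfloor D\rfloor)$ — and feed this into a higher-rank form of the Bogomolov--Sommese estimate. I expect the cleanest route is to pull $\varphi$ back along a log resolution $\pi\colon\wt{X}\to X$ and invoke the extension theorem of \cite{Greb_Kebekus_Kovacs_Peternell_Differential_forms_on_log_canonical_spaces}, which identifies $\pi_*\Omega^{p}_{\wt{X}}(\log\wt{D})$ with $\Omega^{[p]}_X(\log\lfloor D\rfloor)$ for the reduced snc divisor $\wt{D}$ formed by the strict transform of $\lfloor D\rfloor$ together with all exceptional divisors; this reduces the assertion to the analogous vanishing on a log-smooth pair twisted by the nef and big vector bundle $\pi^*\sW$, where one can bring the residue sequence together with Akizuki--Nakano--Viehweg type vanishing to bear. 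The bookkeeping with the boundary $\lfloor D\rfloor$ and the $\bQ$-line bundle $\sL$, and above all keeping the positivity intact through these reductions, is where the real work lies.
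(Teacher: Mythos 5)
Your proposal does not prove the theorem: you yourself flag the case $r\ge 2$ (the rank of the image) as ``the main obstacle'' and leave it unresolved, and that case is the actual content of the statement --- the rank-one case is, as you say, immediate from \cite[Theorem 7.2]{Greb_Kebekus_Kovacs_Peternell_Differential_forms_on_log_canonical_spaces}. The difficulty you identify is real: taking determinants of a rank-$r$ image inside $\Omega^{[n-1]}_X(\log\lfloor D\rfloor)$ introduces a twist by $(r-1)\bigl(K_X+\lfloor D\rfloor\bigr)$, and bigness of $\det\sI$ alone does not contradict the Bogomolov--Sommese bound. Your suggested escape (residue sequences plus Akizuki--Nakano--Viehweg vanishing on a log resolution) is not developed, and it is not clear it would close the gap; as written your argument only establishes the theorem when the image of $\varphi$ has rank one.

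For comparison, the paper works dually: after reducing to the case where $\sL$ is an honest line bundle, a nonzero section is converted by the wedge pairing into a nonzero map $\phi\colon\Omega^1_Y(\log\widetilde D)\to\omega_Y(\widetilde D)\otimes\pi^*\sL\otimes\pi^*\sE$ on a log resolution; with $r=\rk(\im\phi)$ one gets a nonzero map $\Omega^r_Y(\log\widetilde D)\to(\wedge^r\im\phi)^{**}$, the line bundle $\sK:=(\wedge^r\im\phi)^{**}\otimes\omega_Y(\widetilde D)^*\otimes\pi^*\sL^*$ is argued to be the inverse of a pseudo-effective line bundle via an embedding into $\pi^*\wedge^r\sE$, and then $\Hom\bigl(\Omega^{r}_Y(\log\widetilde D),\omega_Y(\widetilde D)\otimes\pi^*\sL\otimes\sK\bigr)\cong H^0\bigl(Y,\Omega^{n-r}_Y(\log\widetilde D)\otimes\pi^*\sL\otimes\sK\bigr)\ne 0$ contradicts Bogomolov vanishing, since $\pi^*\sL\otimes\sK$ is the dual of a big line bundle. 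The extra canonical twist you worry about is exactly what that one step has to absorb, so if you want to complete your argument you should pass to this dual formulation rather than manipulating $\det\sI$ directly. Separately, you cannot dismiss the reduction from a $\bQ$-line bundle to a line bundle with a parenthetical: $\sL$ is only reflexive, so neither the adjunction producing $\varphi$ nor its pullback to a log resolution behaves the way you use it. The paper spends roughly half of the proof on a cyclic covering trick --- checking that log canonicity is preserved and that $\Omega^{n-1}_U(\log\lfloor D\rfloor)$ splits off the pushforward of the log differentials from the cover --- precisely to legitimize this step.
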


Further, in the above statements  anti-nef  could be easily replaced by the more technical term of weakly-negative. We keep the anti-nef version to avoid unnecessary technicalities. It is also expected that most results of the paper hold in the log case as well, i.e., when  stable varieties are replaced by stable pairs. However, we made the decision to keep the log-free versions since the deformation theory part, i.e., Section \ref{sec:deformation}, would have been considerably longer in the log case. This is partially due to the fact that even the starting point of our deformation theory considerations (i.e., \cite{Abramovich_Hassett_Stable_varieties_with_a_twist}) uses the non-log setting. On the other hand,  Theorem \ref{thm:vanishing_log_canonical} is already phrased in the log-setting.

\subsection{Idea of the proof and organization}

Consider a fibration $f \colon X \to Y$ as in Theorem \ref{thm:vague}. According to \cite[Propositions 3.9 and 3.10]{Bhatt_Ho_Patakfalvi_Schnell_Moduli_of_products_of_stable_varieties}, to equate the (unconstrained) deformation theory of the fibration and of $X$, the most important step is to prove that $\Hom_{Y} ( \Omega_{Y}, R^1 f_* \sO_X)=0$  (see also \cite[Theorem 8.1 and Theorem 8.2]{Horikawa_On_deformations_of_holomorphic_maps_III}). One can obtain this from Proposition \ref{thm:semi_negative} and Theorem \ref{thm:vanishing_hom_from_cotangent}. However, there is a subtlety in the deformation theory of stable varieties which makes things considerably harder. The deformation theory of an object in $\ofM_h$ is not given by the unconstrained deformation theory of the corresponding stable variety, but the deformation theory of its index-one covering stack \cite{Abramovich_Hassett_Stable_varieties_with_a_twist}. This index-one covering stack is a finite, 
birational cover whose canonical bundle  is a line bundle. Therefore, one has to pass to index-one 
covers and then apply 
\cite[Propositions 3.9 and 3.10]{Bhatt_Ho_Patakfalvi_Schnell_Moduli_of_products_of_stable_varieties}. There is a further subtlety  at this point. The natural cover to consider for the  deformation theory of $f \colon X \to Y$ as a stable family is the relative index-one-cover of $X$ over the index-one-cover of $Y$. First, this is somewhat hard to deal with because the base of this relative index-one-cover is a stack. So, equating the deformation theory of it to the stable deformation theory of $f \colon X \to Y$ is considerably longer than the absolute case in \cite{Abramovich_Hassett_Stable_varieties_with_a_twist}. Second, this relative index-one-cover does not agree with the absolute index-one-cover of $X$. On the other hand the former does map to the later and one can prove that their deformation theories are the same via this map. However, this yields another layer of extra technicalities to the discussion. 

The passage to index-one covers is worked out in Section \ref{sec:deformation}. Section \ref{sec:definition} contains the precise definition of the objects of Theorems \ref{thm:main}. Sections \ref{sec:vanishing} is devoted to the proofs of the above mentioned Proposition \ref{thm:semi_negative} and Theorem \ref{thm:vanishing_hom_from_cotangent}, while the proof of Theorem \ref{thm:main} is finished in Section \ref{sec:main_theorem}.

\subsection{Acknowledgement}


The author is thankful to Fabrizio Catanese and J\'anos Koll\'ar for the useful remarks. 

\subsection{Notation}
\label{subsec:notations}

We work over an algebraically closed field $k$ of characteristic zero. All schemes and stacks are noetherian and separated over $k$. 
A noetherian scheme $X$ is \emph{relatively $S_d$} over $B$, if $X_b$ is $S_d$ for every $b \in B$. In the same situation if $X_b$ is Gorenstein in codimension one for all $b \in B$, then $X$ is \emph{relatively $G_1$} over $B$. The absolute version of these and of all the other following notions is obtained by simply taking $B= \Spec k$. Since depth of a point and being Gorenstein are formal local properties, being $S_d$ or Gorenstein can be defined for DM-stacks by requiring them on \'etale covers by schemes. Then the above notions do make sense for DM-stacks.

For an arbitrary coherent sheaf $\sF$ on a scheme $X$, the \emph{reflexive hull} of $\sF$ is $\sF^{**}$. \emph{Reflexive power, pullback, tensor product, etc} is defined by taking power, pullback, tensor product, etc and then reflexive hull. E.g., the second reflexive power $\sF^{[2]}$ is $\left( \sF^{\otimes 2}\right)^{**}$. Reflexive operations are denoted by putting square brackets around the usual operation signs. E.g., reflexive pullback is denoted by $f^{[*]}$ and reflexive tensor product by $[\otimes ]$. Reflexive (log-)differentials are denoted by $\Omega_X^{[i]}(\log D)$ and coherently with the above discussion are $(\Omega_X^i(\log D))^{**}$.
 Let $X$ be flat and relatively $S_2$, $G_1$ over $B$. The sheaf $\sF$ on $X$   is a $\bQ$-line bundle, if it is reflexive, a line bundle in relative codimension one, and  $\sF^{[m]}$ is a line bundle for some $m \neq 0$. In particular, by \cite[Proposition 3.6]{Hassett_Kovacs_Reflexive_pull_backs} then $\sF^{[im]} \cong (\sF^{[m]})^i$. A $\bQ$-line bundle is nef, relatively ample, etc. if $\sF^{[m]}$ is nef for any $m$ such that $\sF^{[m]}$ is a line bundle. By the discussion above, this definition does make sense.

\emph{Vector bundle} means a locally free sheaf of finite rank. \emph{Line bundle} means a locally free sheaf of rank one. When it does not cause any misunderstanding, pullback is denoted by lower index. E.g., if $\sF$ is a sheaf on $X$, and $X \to Y$ and $Z \to Y$ are morphisms, then $\sF_Z$ is the pullback of $\sF$ to $X \times_Y Z$. This unfortunately is also a source of some confusion: $\sF_y$ can mean both the stalk and the fiber of the sheaf $\sF$ at the point $y$. Since both are frequently used notation in the literature, we opt to use both and hope that it will always be clear from the context which one we mean.

A \emph{representable} morphism of stacks means representable by schemes. A proper DM-stack with a coarse moduli space is projective if and only if so is its coarse moduli space. A $\bQ$-line bundle or a $\bQ$-Cartier divisor $L$ on a DM-stack $\sX$ is (relatively) ample, if the descent of a high enough multiple of $L$ to the coarse moduli space (given that that exits) is (relatively) ample. This is equivalent to saying that for any finite cover $Y$ of $\sX$ by a scheme the pullback of $L$ to $Y$ is (relatively) ample. Note that this definition really works in the relative case only if the base is a scheme. If it is a stack, then we pull back our family via an \'etale cover of the base, and we apply the above definition there.
Since taking coarse moduli space commutes with base change \cite[Lemma 2.3.3]{Abramovich_Vistoli_Compactifying_the_space_of_stable_maps}, if $\sX$ is projective over the base, then $L$ is relatively ample if and only if it is  ample over every fiber over ever $k$-point of the base (this works even if the base is a DM-stack as well) \cite[Theorem 1.7.8]{Lazarsfeld_Positivity_in_algebraic_geometry_I}. The category $\Sch_k$ is the category of schemes over $k$. Square brackets around quotients, e.g., $[P/G]$, means stack quotient.

All derived category computations of the article take place in $D_{\qc}(X)$, the derived category of unbounded complexes with quasi-coherent cohomology  sheaves. In our situation this is equivalent to the derived category of complexes of quasi-coherent modules via the natural embedding of the latter into $D_{\qc}(X)$. Furthermore the derived functors behave compatibly with this equivalence \cite[page 207]{Neeman_The_Grothendieck_duality_theorem}. Also, the  usual bounded derived categories are full subcategories of $D_{\qc}(X)$, again with agreeing derived functors. We need to use the unbounded derived category, because the cotangent complex 
$\bL_X$ of a scheme (or DM-stack) is unbounded (from below). If $\sC \in D_{\qc}(X)$, then $h^i(\sC)$ is the $i$-th cohomology sheaf and $H^i(X,\sC)$ is the $i$-th hypercohomology of $\sC$. If $f \colon X \to Y$ is a morphism, $R^{< i}f_* \sC$ and $R^{\leq i }f_* \sC$ mean the adequate truncations of $R f_* \sC$. 

The abbreviations \emph{lc} and \emph{slc} mean log canonical and semi-log canonical, respectively. If $S$ is a reduced divisor on a (demi-)normal scheme, $0 \leq \Delta$ a $\bQ$-divisor, and $S$ a reduced divisor with normalization $S^n$, then  $\Diff_S \Delta$ and $\Diff_{S^n} \Delta$ denote the different \cite[Different 4.2]{Kollar_Singularities_of_the_minimal_model_program}.

\section{Definition of the moduli spaces and forgetful maps}
\label{sec:definition}

In this section we define precisely the moduli space $\fF \fM_{(h_1,h_2),h}$, and then after some technical preparation we define the functor $F \colon \fF \fM_{(h_1, h_2),h} \to \ofM_h$ of Theorem \ref{thm:main}. 

\subsection{The moduli spaces}

First, shortly we recall the definition of stable varieties, and define the moduli space $\fF \fM_{(h_1,h_2),h}$.

\begin{defn}
A noetherian scheme is \emph{demi-normal}, if it is $S_2$ and nodal in codimension one \cite[Definition 5.1]{Kollar_Singularities_of_the_minimal_model_program}. Here nodal is meant in the sense of \cite[1.41]{Kollar_Singularities_of_the_minimal_model_program}.
\end{defn}

\begin{defn}
\label{defn:slc}
Let $X$ be a demi-normal scheme and $\pi \colon \oX \to X$ its normalization. Then the (reduced) double locus of $\pi$ on $\oX$ is of pure codimension $1$ and is called the \emph{conductor} of $X$. Denote it by $\oD$. The scheme $X$ is \emph{semi-log canonical} (or shortly \emph{slc}), if $K_X$ is $\bQ$-Cartier and $(\oX, \oD)$ is log canonical \cite[Definition-Lemma 5.10]{Kollar_Singularities_of_the_minimal_model_program}. 

If there is also a $\bQ$-Weil divisor $\Delta$ given on $X$, which avoids the codimension one singular point of $X$, then we can also define when the pair $(X, \Delta)$ is slc. In this situation $\Delta$ is $\bQ$-Cartier in codimension one, and then $\ol{\Delta}:=\pi^* \Delta$ is defined as the unique extension of the pullback over the $\bQ$-Cartier locus of $\Delta$. Then, $(X, \Delta)$ is  slc, if $K_X + \Delta$ is $\bQ$-Cartier and $(\oX, \oD + \ol{\Delta})$ is log canonical (see \cite[Definition-Lemma 5.10]{Kollar_Singularities_of_the_minimal_model_program}, and note that it works for non-effective $\Delta$ as well).

\end{defn}

\begin{notation}
If $X$ is a demi-normal scheme, then saying that $\pi\colon (\oX , \oD) \to X$ is the normalization means that $\oD$ is the conductor divisor, on the normalization $\oX$ of $X$. The (reduced) divisor of the double locus on $X$, i.e., $(\pi_* \oD)_{\red}$, is also called the conductor, since it is defined by the same ideal $\sI \subseteq \sO_X$ as $\oD$ (this ideal lies a priori in $\pi_* \sO_{\oX}$, but it happens to be contained in $\sO_X$). 
\end{notation}


\begin{defn}
\label{defn:stable_variety}
A \emph{stable} variety is an equidimensional, connected, proper, slc scheme over a field, such that $\omega_X$ is ample. The function $h(m):=\chi \left( \omega_X^{[m]} \right)$ is called the \emph{Hilbert function} of $X$.
\end{defn}

\begin{defn}
\label{defn:stable_family}
A family of stable varieties is a flat morphism $f \colon X \to B$ satisfying \emph{Koll\'ar's condition}. That is, for all $m \in \bZ$ and $b \in B$, $X_b$ is a stable variety and $\omega_{X/B}^{[m]}$ is flat, and for every base change $\tau \colon B' \to B$ and the induced morphism $\rho\colon X_{B'} \to X$, the natural homomorphism
\begin{equation}
\label{eq:stable_family:Kollar_condition}
\rho^*  \left( \omega_{X/B}^{[m]} \right) \to \omega_{X_{B'}/B'}^{[m]}
\end{equation}
is an isomorphism.
\end{defn}

\begin{notation}
\label{notation:stable_moduli_space}
One may consider then the category of all  stable families with fixed Hilbert function $h$. One can show that this forms a proper DM-stack of finite type over $k$ \cite[Theorem 2.8]{Bhatt_Ho_Patakfalvi_Schnell_Moduli_of_products_of_stable_varieties}, and it is denoted by $\ofM_h$ in this article. The category of all stable families of relative dimension $m$ is denoted by $\ofM_m$. That is, $\ofM_m= \bigcup_{\deg h = m} \ofM_h$.
\end{notation}

\begin{defn}
\label{defn:fibration_of_stable_varieties}
A  \emph{fibration of stable varieties with Hilbert function vector $\uh=(h_1,h_2)$} over a base scheme $B$   is a commutative diagram
\begin{equation}
\label{eq:fibration_of_stable_schemes}
\xymatrix{
    X_2 = X  \ar@/^2pc/[rr]^f  \ar[r]_{f_2=g} & X_1 = Y \ar[r]_{f_1} &  X_0= B
} ,
\end{equation}
such that $f_i$ is a family of stable varieties (satisfying Koll\'ar's condition), and $\chi \left(  \omega_{(X_i)_y}^{[m]} \right) = h_i(m)$ for every $m \in \bZ$, $1 \leq i \leq 2$ and $y \in X_{i-1}$. Define the category fibered in groupoids $\fF \fM_{\uh}$ over $\Sch_k$ to have such fibrations as objects over $B$, and natural Cartesian pullbacks as morphisms. I.e. a morphism  of $(X \to Y \to B)$ to $(X' \to Y' \to B')$ is a diagram as follows with all the squares being Cartesian.
\begin{equation*}
\xymatrix{
X \ar[r] \ar[d] &  X' \ar[d] \\
Y \ar[r] \ar[d] &  Y' \ar[d] \\
B \ar[r]  &  B'  \\
}
\end{equation*}
Sometimes we further require the Hilbert function of $f$ to be a fixed polynomial $h$. We denote the category obtained that way by $\fF \fM_{(h_1,h_2),h}$. For a vector of integers $\um=(m_1,m_2)$ define also the category  of all fibrations with dimension vector $\um$ as follows (with the morphisms being only the ones induced from $\fF \fM_{\uh}$).
\begin{equation*}
\fF \fM_{\um}:= \bigcup_{\uh=(h_1,h_2), \deg h_i = m_i} \fF \fM_{\uh}
\end{equation*}
\end{defn}

\begin{notation}
\label{notation:tower_of_stable_schemes}
Given a fibration as in \eqref{eq:tower_of_stable_schemes}, we use the short notations $( X \to Y \to B)$, $\uX$ or $(X_i,f_i)$ for it. 
\end{notation}

\begin{prop}
\label{prop:stack_isomorphism}
Let $\ofM_n$ denote the moduli stack of all stable varieties of dimension $n$, and $\ofU_n$ the universal family over it. Then, 
\begin{equation}
\label{eq:stack_isomorphism:statement}
\fF \fM_{(m_1,m_2)} \cong \underline{\Hom}_{\ofM_{m_1}}( \ofU_{m_1}, \ofM_{m_2} \times \ofM_{m_1}) 
\end{equation}
and hence it is a DM-stack locally of finite type.
\end{prop}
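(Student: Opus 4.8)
\textbf{Proof proposal for Proposition \ref{prop:stack_isomorphism}.}

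The plan is to produce the isomorphism \eqref{eq:stack_isomorphism:statement} by giving mutually inverse morphisms of categories fibered in groupoids over $\Sch_k$ and then deduce the structural statement from known representability of Hom-stacks. First I would unwind both sides functor-theoretically. Over a test scheme $B$, an object of $\fF \fM_{(m_1,m_2)}(B)$ is a tower $X \to Y \to B$ with $Y \to B$ and $X \to Y$ both stable families of relative dimensions $m_1$ and $m_2$. The datum $Y \to B$ is exactly a morphism $B \to \ofM_{m_1}$ (by the modular interpretation of $\ofM_{m_1}$, using that $\ofM_n = \bigcup_{\deg h = n}\ofM_h$ and Notation \ref{notation:stable_moduli_space}), and under this correspondence $Y = B \times_{\ofM_{m_1}} \ofU_{m_1}$ canonically. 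Then a stable family $X \to Y$ whose fibers have dimension $m_2$ is, again by the modular interpretation, the same as a morphism $Y \to \ofM_{m_2}$. Packaging the two together: an object of $\fF \fM_{(m_1,m_2)}(B)$ is the same as a pair $(\beta, \gamma)$ where $\beta \colon B \to \ofM_{m_1}$ and $\gamma \colon B\times_{\ofM_{m_1}} \ofU_{m_1} \to \ofM_{m_2}$; and such a pair is precisely a $B$-point of $\underline{\Hom}_{\ofM_{m_1}}(\ofU_{m_1}, \ofM_{m_2}\times \ofM_{m_1})$, where the map to the second factor $\ofM_{m_1}$ is the structural morphism of $\ofU_{m_1}$ and records $\beta$, while the map to $\ofM_{m_2}$ records $\gamma$. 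I would check both assignments (tower $\mapsto$ pair, pair $\mapsto$ tower) are functorial in $B$, compatible with pullback, and manifestly inverse to each other — the reconstruction of $X$ from $\gamma$ being $X = Y\times_{\ofM_{m_2}}\ofU_{m_2}$, which recovers a stable family over $Y$ because $\ofU_{m_2}\to\ofM_{m_2}$ is the universal stable family (so Kollár's condition holds after base change automatically). One also checks the morphisms of the two fibered categories match: a morphism in $\fF \fM_{\uh}$ is a pair of Cartesian squares, which corresponds exactly to a $2$-commutative triangle of maps into the Hom-stack.

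For the final clause, I would invoke the standard representability result that for a proper flat DM-stack $U \to S$ and a DM-stack $T \to S$ locally of finite type (here $S = \ofM_{m_1}$, $U = \ofU_{m_1}$, $T = \ofM_{m_2}\times \ofM_{m_1}$), the Hom-stack $\underline{\Hom}_S(U, T)$ is a DM-stack locally of finite type over $S$, hence over $k$; this uses that $\ofM_{m_1}$ and $\ofM_{m_2}$ are DM-stacks of finite type over $k$ (Notation \ref{notation:stable_moduli_space}) and $\ofU_{m_1}\to\ofM_{m_1}$ is proper and flat, being a universal stable family. Since the isomorphism \eqref{eq:stack_isomorphism:statement} is then an equivalence of categories fibered in groupoids over $\Sch_k$, the left-hand side inherits the property of being a DM-stack locally of finite type over $k$.

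The main obstacle I expect is not the formal bijection of objects — which is essentially bookkeeping of the modular interpretations — but making sure the dictionary is \emph{clean at the level of the relative canonical sheaves}, i.e. that "stable family" in Definition \ref{defn:stable_family} with Kollár's condition is exactly captured by a map to $\ofM_{m_2}$ without any compatibility loss under the base change $Y = B\times_{\ofM_{m_1}}\ofU_{m_1}$; since $\ofM_{m_2}$ is defined as the stack of exactly such families, this is true by construction, but it should be spelled out that the Hilbert-function decomposition $\ofM_n = \bigsqcup_h \ofM_h$ causes no set-theoretic issue (the fiber dimension $m_2$ is locally constant on $B$, but the Hilbert polynomial of $X\to Y$ need only be locally constant on $Y$, which is why one works with $\ofM_{m_2}$ rather than a single $\ofM_{h_2}$). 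A secondary technical point is verifying that $2$-morphisms correspond correctly, so that the equivalence is an equivalence of $2$-categories (stacks), not merely of underlying groupoid-valued functors; this follows because both sides' isomorphisms are pullback-Cartesian data and the Yoneda-type argument identifying towers with Hom-stack points is itself $2$-functorial.
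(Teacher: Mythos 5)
Your proposal is correct and follows essentially the same route as the paper: both translate the second stable family $X \to Y$ into a morphism $Y = B \times_{\ofM_{m_1}} \ofU_{m_1} \to \ofM_{m_2}$ via the modular interpretation, identify this data with a $B$-point of $\underline{\Hom}_{\ofM_{m_1}}(\ofU_{m_1}, \ofM_{m_2}\times\ofM_{m_1})$ naturally in $B$, and then invoke Olsson's representability theorem for Hom-stacks (extended to a DM-stack base by passing to an \'etale atlas) to conclude the DM-stack statement. The only cosmetic difference is that the paper organizes the identification fiberwise over $[Y\to B]\in\ofM_{m_1}$ via the forgetful map, whereas you work directly over test schemes $B$; the content is the same.
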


\begin{proof}
 There is a forgetful map $\pi \colon \fF \fM_{(m_1,m_2)} \to \ofM_{m_1}$ remembering only $Y$ of a fibration in \eqref{eq:fibration_of_stable_schemes}. We prove \eqref{eq:stack_isomorphism:statement}, by showing an isomorphism over $\ofM_{m_1}$, using $\pi$ as the structure map on the left and the natural projection on the right. So, fix $[Y \to B] \in \ofM_{m_1}$.  Given an element of $( X \to Y \to B) \in \fF \fM_{(m_1, m_2)}$ over $[Y \to B]$,  yields a family of stable varieties over $Y$ of dimension $m_2$.
Hence, $\uX=(X \to Y \to B)$ defines a morphism $\nu_{\uX} \colon Y \to \ff \fM_{(m_1,m_2)}$. Furthermore, since $\fF \fM_{(m_1, m_2)}$ represents the moduli problem of fibrations with dimension vector $(m_1,m_2)$, automorphisms of $\uX$ over $[Y \to B]$ and automorphisms of $\nu_{\uX}$ also match up. Hence we obtain the following string of isomorphisms of groupoids.
\begin{multline}
\label{eq:stack_isomorphism:long}
\fF \fM_{(m_1, m_2)}([Y \to B]) \cong  \Hom( Y, \ofM_{m_2}) \cong \Hom_{B}( Y, \ofM_{m_2} \times B) \\ \cong \Hom_{B}( (\ofU_{m_1})_B, \ofM_{m_2} \times B) =_{\mathrm{def}}  \underline{\Hom}_{\ofM_{m_1}}( \ofU_{m_1}, \ofM_{m_2} \times \ofM_{m_1})([Y \to B]) ,
\end{multline}
where
\begin{itemize}
\item $\Hom$ means the groupoid of functors over the base space 
\item $\underline{\Hom}$ is the $\Hom$-stack \cite{Olsson_Home_stacks_and_restriction_of_scalars} and
\item putting $([Y \to B])$ after a category means the fiber over $[Y \to B]$. 
\end{itemize}
The isomorphisms of \eqref{eq:stack_isomorphism:long} are all natural with respect to Cartesian maps 
\begin{equation*}
\xymatrix{
Y' \ar[r] \ar[d] & Y \ar[d] \\
B' \ar[r] & B .
}
\end{equation*}
Hence, \eqref{eq:stack_isomorphism:long} really yields an isomorphism as in \eqref{eq:stack_isomorphism:statement} over $\ofM_{m_1}$.
 
To prove the DM-stack statement it is enough to show that $\underline{\Hom}_{\fM}(\fX,\fY)$ is a DM-stack locally of finite type over $k$ whenever $\fM$, $\fX$ and $\fY$ are DM-stacks locally of finte type over $k$ and $\fX$ is proper, flat and representable over $\fM$. This is shown in \cite[Theorem 1.1]{Olsson_Home_stacks_and_restriction_of_scalars}, when $\fM$ is an algebraic space. To deduce it for a DM-stack $\fM$, one replaces $\fM$ with one of its \'etale atlases. This finishes our proof.
\end{proof}

\subsection{Adjunction}
\label{sec:adjunction}

Having defined the moduli spaces of Theorem \ref{thm:main}, the last goal of Section \ref{sec:definition} is to define the forgetful morphism $F \colon \fF \fM_{\um} \to \ofM_m$ of Theorem \ref{thm:main}. 
We need to show that the composite morphism $f$ of \eqref{eq:fibration_of_stable_schemes} is a family of stable varieties. In particular this involves showing that the total space of a family of slc varieties over an slc base is slc. The technical tool for this is inversion of adjunction, which relates the singularities of a divisor to the singularities of the total space close to the divisor. Since we are not aware of a good reference of inversion of adjunction for reducible total spaces,  we include it here.

For inductional reasons we need to use at certain places slc pairs, not only varieties.

\begin{lem}
\label{lem:inversion_of_adjunction}
Let $X$ be a demi-normal scheme, $\oD$ its conductor divisor, $S$ a reduced 
divisor with normalization $S^n \to S$ and $\Delta \geq 0$ a $\bQ$-divisor such that no
two of $\oD$, $S$ and $\Delta$ have common components and $K_X + S + \Delta$ is
$\bQ$-Cartier. In this case, $(X,S + \Delta)$ is slc near $S$ if and only if $(S^n,
\Diff_{S^n} (\Delta) )$ is lc.
\end{lem}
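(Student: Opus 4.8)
The plan is to reduce the statement to the normal case, where inversion of adjunction is available, by passing to the normalization $\pi \colon (\oX, \oD) \to X$. Write $\oS$ and $\ol{\Delta}$ for the strict transforms (equivalently, the divisorial pullbacks away from the singular locus) of $S$ and $\Delta$ on $\oX$; since no two of $\oD$, $S$, $\Delta$ share a component, the divisors $\oD$, $\oS$, $\ol{\Delta}$ have no common components either, and $\pi^*(K_X + S + \Delta) = K_{\oX} + \oD + \oS + \ol{\Delta}$ by the definition of the different / conductor (\cite[Definition-Lemma 5.10]{Kollar_Singularities_of_the_minimal_model_program}). By the very definition of slc (Definition \ref{defn:slc}, extended to pairs), $(X, S+\Delta)$ is slc near $S$ if and only if $(\oX, \oD + \oS + \ol{\Delta})$ is lc near $\oS$. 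So it suffices to prove: \emph{$(\oX, \oD + \oS + \ol{\Delta})$ is lc near $\oS$ if and only if $(S^n, \Diff_{S^n}\Delta)$ is lc}, where I must check that the different computed on $S^n$ from the pair $(\oX, \oD + \ol{\Delta})$ along $\oS$ agrees with $\Diff_{S^n}\Delta$ as defined in the statement.

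The first key step is therefore the bookkeeping of differents. The normalization of $S$ factors as $S^n \to \oS \to S$, and $\oS \to S^n$ need not be an isomorphism, but it is finite and birational, with $\oS$ demi-normal (being a reduced divisor on the normal scheme $\oX$, hence $S_2$ and nodal in codimension one, after possibly noting $\oS$ itself may still be reducible but is normal crossing-type in codimension one). Applying usual (normal) adjunction on $\oX$ along each component of $\oS$: standard adjunction gives $K_{\oS} + \Diff_{\oS}(\oD + \ol{\Delta}) = (K_{\oX} + \oS + \oD + \ol{\Delta})|_{\oS}$, and then a further normalization/different computation passing from $\oS$ to $S^n$ absorbs the conductor of $\oS \to S^n$ together with the part of $\oD$ meeting $\oS$; the outcome is the identity $\Diff_{S^n}(\oD + \ol{\Delta})$ computed on $\oX$ equals $\Diff_{S^n}(\Delta)$ as defined on $X$ in the statement (this is the compatibility of the different with normalization, and is exactly how $\Diff_{S^n}\Delta$ is set up in \cite[4.2]{Kollar_Singularities_of_the_minimal_model_program}). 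I would state this as a sub-claim and verify it in codimension one on $S^n$, where everything is a nodal-curve-to-smooth-curve computation, then invoke $S_2$-ness to extend the coefficient identity everywhere.

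The second key step is the actual inversion of adjunction in the normal setting: for the normal variety $\oX$ with the reduced divisor $\oS$ (whose components may individually be non-normal, but we pass to $S^n$) and boundary $\oD + \ol{\Delta} \geq 0$ having no component in common with $\oS$, one has that $(\oX, \oS + \oD + \ol{\Delta})$ is lc near $\oS$ if and only if $(S^n, \Diff_{S^n}(\oD + \ol{\Delta}))$ is lc. This is precisely \cite[Theorem 4.9 / inversion of adjunction]{Kollar_Singularities_of_the_minimal_model_program} (or the original Kawakita/Odaka--Xu form), and it applies verbatim since $\oX$ is normal and $K_{\oX} + \oS + \oD + \ol{\Delta}$ is $\bQ$-Cartier (pulled back from the $\bQ$-Cartier class on $X$). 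Combining this with the different-compatibility sub-claim and the definition of slc yields the lemma.

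The main obstacle I anticipate is the different-bookkeeping in the presence of the conductor: one must be careful that the contribution of $\oD$ along $\oS$ is exactly what is needed so that the different on $S^n$ coming from $(\oX, \oD + \ol{\Delta})$ along $\oS$ matches the divisor $\Diff_{S^n}\Delta$ that appears in the statement (which is computed without reference to $\oX$). This is where reducibility of $X$ genuinely enters — $\oS \to S^n$ is not an isomorphism in general, and the nodes of $S$ lying on the double locus of $X$ must be tracked — so I would do this computation carefully in codimension one on $S^n$ and not merely cite the normal-case adjunction as a black box. Everything else ($S_2$ extension of coefficient identities, the equivalence of slc with lc-on-normalization, and the $\bQ$-Cartier pullback) is routine.
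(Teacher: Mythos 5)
Your reduction to the normalization and your worry about the different bookkeeping are both on target, and the latter is handled exactly as you suggest (the identity $\Diff_{S^n}(\Delta)=\Diff_{S^n}(\oD+\ol{\Delta})$ via \cite[5.7]{Kollar_Singularities_of_the_minimal_model_program} is the paper's first step). But there is a genuine gap in the step you treat as definitional: the equivalence ``$(X,S+\Delta)$ is slc near $S$ $\Leftrightarrow$ $(\oX,\oD+\oS+\ol{\Delta})$ is lc near $\oS$'' is false in the direction you need for the backward implication. Slc-ness of $(X,S+\Delta)$ at a point $x\in S\cap D$ requires lc-ness of the normalized pair at \emph{every} preimage of $x$ in $\oX$, and when $x$ lies on the double locus these preimages include points on branches of $\oX$ that do not meet $\oS$ at all; no neighborhood of $\oS$ contains them. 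So inversion of adjunction on $\oX$, which only yields lc-ness near $\oS$, does not by itself give slc-ness of $(X,S+\Delta)$ near $S$. What is actually needed is lc-ness near the full preimage $\pi^{-1}\pi(\oS)$.

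This propagation from $\oS$ to $\pi^{-1}\pi(\oS)$ is the real content of the lemma and is absent from your proposal. The paper supplies it by a second application of inversion of adjunction, this time along the conductor: for $P\in\pi^{-1}\pi(\oS)$ with $P\in\oD$, lc-ness of $(\oX,\oD+\oS+\ol{\Delta})$ at $P$ is equivalent to lc-ness of $(\oD^n,\Diff_{\oD^n}(\oS+\ol{\Delta}))$ at points over $P$; this pair is invariant under the gluing involution $\tau$ on $\oD^n$ \cite[Proposition 5.12]{Kollar_Singularities_of_the_minimal_model_program}, and by \cite[(5.3), proof of (5.33)]{Kollar_Singularities_of_the_minimal_model_program} every point over $P$ is connected to a point mapping into $\oS$ by a finite $\tau$-orbit, so lc-ness propagates backwards along that orbit. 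You should either add this argument or restate your intermediate equivalence with ``near $\pi^{-1}\pi(\oS)$'' in place of ``near $\oS$'' and then prove the strengthening; as written, the backward direction of the lemma is not established.
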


\begin{proof}
Let $\pi \colon (\oX,\oD + \oS +  \ol{\Delta}) \to (X, S+ \Delta)$ be the normalization (i.e., $\oD$ is the conductor) of $X$, $n\colon \oD^n \to \oD$ the normalization of the conductor and $\tau \colon \oD^n \to \oD^n$ the order two automorphism exchanging the preimages of the nodes \cite[5.2]{Kollar_Singularities_of_the_minimal_model_program}. Note then that by the arguments of \cite[5.7]{Kollar_Singularities_of_the_minimal_model_program}
\begin{equation}
\label{eq:inversion_of_adjunction:two}
 \Diff_{S^n}(\Delta) = \Diff_{S^n}(\oD + \ol{\Delta}).
\end{equation}
First, assume that $(X, S + \Delta)$ is slc near $S$. Then by \cite[Definition-Lemma 5.10]{Kollar_Singularities_of_the_minimal_model_program}, $(\oX, \oD  + \oS  + \ol{\Delta})$ is lc near $\oS$. Therefore by adjunction $(S^n,
\Diff_{S^n} (\oD + \ol{\Delta}) )$ is lc. Finally using \eqref{eq:inversion_of_adjunction:two} yields  the forward direction of the lemma.

To prove the backwards direction, assume that $(S^n,
\Diff_{S^n} (\Delta) )$ is lc. Then by \eqref{eq:inversion_of_adjunction:two}, so is $(S^n,
\Diff_{S^n} (\oD + \ol{\Delta)} )$. Hence by inversion of adjunction on normal varieties \cite[Theorem]{Kawakita_Inversion_of_adjunction_on_log_canonicity}, $(\oX, \oD +\oS + \ol{\Delta})$ is lc around $\oS$. \emph{We claim that in fact it is lc around $\pi^{-1} \pi(\oS)$.} This will yield the backwards direction by again using \cite[Definition-Lemma 5.10]{Kollar_Singularities_of_the_minimal_model_program}. 

To prove the claim, fix a point $P \in \pi^{-1} \pi(\oS)$. If $P \not\in D$, then the claim is immediate. Hence we assume $P \in D$. By inversion of adjunction on normal varieties \cite[Theorem]{Kawakita_Inversion_of_adjunction_on_log_canonicity}, $(X, \oD + \oS + \ol{\Delta})$ is lc around $P$ if and only if so is $(\oD^n, \Diff_{\oD^n}( \oS + \ol{\Delta} ))$ at points over $P$. However, by \cite[(5.3), proof of (5.33)]{Kollar_Singularities_of_the_minimal_model_program} and the fact that $P \in \pi^{-1} \pi(\oS)$, for every point $Q$ over $P$, there is a finite sequence $Q=Q_1,\dots,Q_r$, such that $\tau(Q_i) = Q_{i+1}$ and $n(Q_r) \in \oS$. Further, $(\oD^n, \Diff_{\oD^n}( \oS + \ol{\Delta} ))$ is $\tau$-invariant by \cite[Proposition 5.12]{Kollar_Singularities_of_the_minimal_model_program}. Hence by downward induction on $i$, using that $\tau$ is an isomorphism, $(\oD^n, \Diff_{\oD^n} (\oS + \ol{\Delta}))$ is lc at every $Q_i$.

\end{proof}

\begin{cor}
\label{cor:inversion_of_adjunction}
Let both $X$ and the effective divisor $S \subseteq X$ be demi-normal schemes. Furthermore, let $\Delta \geq 0$ a $\bQ$-divisor on $X$, which avoids the codimension 0 points of $S$, and the singular codimension 1 points of $X$ and $S$. Assume also that $K_X + S + \Delta$ is $\bQ$-Cartier. Then 
\begin{equation*}
 (S,\Diff_S(\Delta)) \textrm{ is slc } \Leftrightarrow (X, S+ \Delta) \textrm{ is slc near } S .
\end{equation*}
\end{cor}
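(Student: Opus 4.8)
The plan is to deduce this directly from Lemma \ref{lem:inversion_of_adjunction}, of which it is a reformulation: the only real task is to rewrite the condition ``$(S^n,\Diff_{S^n}(\Delta))$ is lc'' occurring there into the intrinsic condition ``$(S,\Diff_S(\Delta))$ is slc'' on the demi-normal divisor $S$, and then to check that the hypotheses of the corollary supply those of the lemma.

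For the rewriting I would first introduce the normalization $\nu\colon S^n\to S$ and its conductor $D_S\subseteq S^n$, so that $\nu^*K_S\sim K_{S^n}+D_S$. Since $\Delta$ avoids the codimension one singular points of $S$, the divisor $\Diff_S(\Delta)$ is defined, avoids the generic points of $D_S$, and $(S,\Diff_S(\Delta))$ is a pair in the sense of Definition \ref{defn:slc}; in particular $\nu^*\Diff_S(\Delta)$ is the birational transform $\ol{\Diff_S(\Delta)}$. Computing the different in two stages $X\rightsquigarrow S\rightsquigarrow S^n$ (see \cite[Different 4.2, 5.7]{Kollar_Singularities_of_the_minimal_model_program}) then gives
\[
\Diff_{S^n}(\Delta)=D_S+\ol{\Diff_S(\Delta)} .
\]
Moreover, by the definition of the different, $K_S+\Diff_S(\Delta)$ is $\bQ$-linearly equivalent to the restriction to $S$ of the $\bQ$-Cartier divisor $K_X+S+\Delta$, hence is itself $\bQ$-Cartier. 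Applying Definition \ref{defn:slc} to $S$ we conclude that $(S,\Diff_S(\Delta))$ is slc if and only if $(S^n,D_S+\ol{\Diff_S(\Delta)})=(S^n,\Diff_{S^n}(\Delta))$ is lc, which is exactly the divisor-side condition of Lemma \ref{lem:inversion_of_adjunction}.

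It then remains to see that the corollary's hypotheses give those of Lemma \ref{lem:inversion_of_adjunction}. That $X$ is demi-normal with conductor $\oD$, that $S$ is a reduced divisor (being demi-normal) admitting a normalization, that $\Delta\geq 0$, and that $K_X+S+\Delta$ is $\bQ$-Cartier are all immediate. For the pairwise ``no common component'' conditions: $S$ and $\Delta$ share none since $\Delta$ avoids the codimension zero points of $S$; $\oD$ and $\Delta$ share none since $\Delta$ avoids the codimension one singular points of $X$, which by demi-normality of $X$ are exactly the generic points of $\oD$; and $S$ and $\oD$ share none, as $S$ contains no component of the conductor of $X$ (implicit in $\Diff_S$ being defined for $S\subseteq X$). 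With all hypotheses verified, Lemma \ref{lem:inversion_of_adjunction} yields ``$(S^n,\Diff_{S^n}(\Delta))$ is lc $\Leftrightarrow$ $(X,S+\Delta)$ is slc near $S$'', and chaining this with the equivalence of the previous paragraph gives the corollary.

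I do not expect a genuine obstacle here: all the substance of inversion of adjunction is already carried by Lemma \ref{lem:inversion_of_adjunction}, and what is left is the bookkeeping above. The two points to watch are that the disjointness ``$S$ and $\oD$ have no common component'' is genuinely in force (so that the lemma applies), and that in the two-stage computation of the different the conductor $D_S$ enters with coefficient exactly one, so that $(S^n,\Diff_{S^n}(\Delta))$ is literally the pair appearing in Definition \ref{defn:slc} for the demi-normal scheme $S$.
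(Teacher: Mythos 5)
Your proposal is correct and follows essentially the same route as the paper: both reduce to Lemma \ref{lem:inversion_of_adjunction} via the identity $\Diff_{S^n}(\Delta)=E+\rho^*\Diff_S(\Delta)$ (your $D_S+\ol{\Diff_S(\Delta)}$) from \cite[5.7]{Kollar_Singularities_of_the_minimal_model_program} and the normalization criterion of \cite[Definition-Lemma 5.10]{Kollar_Singularities_of_the_minimal_model_program}. Your explicit verification of the ``no common components'' hypotheses is a welcome addition that the paper leaves implicit.
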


\begin{proof}
Let $\rho\colon (S^n,E) \to S$ be the normalization of $S$. Similarly to \eqref{eq:inversion_of_adjunction:two}, using \cite[(5.7.2)]{Kollar_Singularities_of_the_minimal_model_program}, one can show that
\begin{equation}
\label{eq:inversion_of_adjunction:diff}
\Diff_{S^n}(\Delta)= \rho^* \Diff_S(\Delta) + E .
\end{equation}
That is, the following diagram of implications conclude our proof. 
\begin{equation*}
\xymatrix{
 (S,\Diff_S(\Delta)) \textrm{ is slc } \ar@{<=>}[rrrr]^{\textrm{\cite[Definition-Lemma 5.10]{Kollar_Singularities_of_the_minimal_model_program}}} 
& & & & (S^n, E + \rho^* \Diff_S(\Delta)) \textrm{ is lc } \ar@{<=>}[d]^{\textrm{\eqref{eq:inversion_of_adjunction:diff}}} \\
(X,S + \Delta) \textrm{ is lc near } S \ar@{<=>}[rrrr]_{\textrm{Lemma \ref{lem:inversion_of_adjunction}}}
& & & & 
(S^n,  \Diff_{S^n}(\Delta)) 
}
\end{equation*}

\end{proof}

Finally, the next lemma shows that the total space of a family of slc schemes over slc schemes is slc. Note that if one has no boundary divisors then assumption \eqref{itm:stable_fibration_total_space:singular_codim_one_pts} is vacuous. Further,  assumption \eqref{itm:stable_fibration_total_space:relative_Q_Cartier} is automatically satisfied if $f$ fulfills Koll\'ar's condition. 

\begin{lem}
\label{lem:stable_fibration_total_space}
Let  $f\colon X \to Y$ be a flat family and $\Delta_X$ and $\Delta_Y$ effective $\bQ$-divisors on $X$ and $Y$, respectively.  Assume that \begin{enumerate}                                                                                                                                      
\item \label{itm:stable_fibration_total_space:base_slc} $(Y, \Delta_Y)$ is slc,
\item \label{itm:stable_fibration_total_space:singular_codim_one_pts} $\Delta_X$ avoids singular codimension one points of the fibers,
\item \label{itm:stable_fibration_total_space:relative_Q_Cartier} there is an integer $N>0$, such that $N \Delta_X$ is an integral divisor and $\omega_{X/Y}^{[N]}(N\Delta_X)$ is a line bundle (where                                                                                                                                $\omega_{X/Y}^{[N]}(N\Delta_X)= \iota_* \omega_{U/Y}^N (N \Delta_X |_U)$ for the locus $U$ where $f$ is relative Gorenstein and $N \Delta_X$ is Cartier) and 
\item \label{itm:stable_fibration_total_space:fibers_slc} $(X_y,\Delta_X|_{X_y})$ is slc for every $y \in Y$.
\end{enumerate}
Then $(X, \Delta)$ is also slc, where $\Delta:=\Delta_X + f^*\Delta_Y$.
\end{lem}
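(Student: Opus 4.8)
The plan is to reduce to a single fibre, where the assertion is exactly hypothesis~\eqref{itm:stable_fibration_total_space:fibers_slc}, by an induction on $\dim Y$ whose inductive step cuts $Y$ with a general Cartier divisor and feeds the result into the inversion-of-adjunction statements just established (Lemma~\ref{lem:inversion_of_adjunction} and Corollary~\ref{cor:inversion_of_adjunction}).

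First I would record the points that make $(X,\Delta)$ a bona fide slc pair to begin with. Flatness of $f$ together with $(Y,\Delta_Y)$ slc and all fibres $S_2$ forces $X$ to be $S_2$; and a local study of the codimension-one points of $X$ — each lies over a point $\eta$ of $Y$ of codimension $\le 1$, and is either a codimension-one point of the generic fibre (nodal, as that fibre is demi-normal) or the generic point of a fibre component over a codimension-one $\eta$, where $\mathcal O_{X,\xi}$ is flat over $\mathcal O_{Y,\eta}$ with residue field as fibre and hence is regular or nodal according as $\mathcal O_{Y,\eta}$ is — shows $X$ is nodal in codimension one, so $X$ is demi-normal, and that $\Delta=\Delta_X+f^*\Delta_Y$ avoids the codimension-one singular points of $X$, using \eqref{itm:stable_fibration_total_space:singular_codim_one_pts} and \eqref{itm:stable_fibration_total_space:base_slc}. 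Finally $K_X+\Delta=K_{X/Y}+\Delta_X+f^*(K_Y+\Delta_Y)$ is $\bQ$-Cartier by \eqref{itm:stable_fibration_total_space:relative_Q_Cartier} and \eqref{itm:stable_fibration_total_space:base_slc}.

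Now the induction on $\dim Y$. The case $\dim Y=0$ is \eqref{itm:stable_fibration_total_space:fibers_slc}. For $\dim Y\ge 1$, fix $x\in X$, set $y=f(x)$, and first reduce to $Y$ normal: replace $f$ by its base change $X_{Y^n}\to Y^n$ along the normalization $\nu\colon(Y^n,\Gamma)\to Y$, which by flatness of $f$ is again flat with the same fibres and, with $\Gamma+\nu^*\Delta_Y$ (which is lc, as $(Y,\Delta_Y)$ is slc) in the role of $\Delta_Y$, again satisfies \eqref{itm:stable_fibration_total_space:base_slc}--\eqref{itm:stable_fibration_total_space:fibers_slc}; since $X_{Y^n}\to X$ is finite and birational with the same normalization $\bar X$ as $X$, a comparison of conductors reduces slc-ness of $(X,\Delta)$ near $x$ to slc-ness of the total space over $Y^n$. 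So assume $Y$ normal, and choose a general Cartier divisor $B$ on $Y$ through $y$ (for $\dim Y=1$, $Y$ is then smooth and one may take $B=\{y\}$); then $B$ is normal with $(B,\Diff_B\Delta_Y)$ lc by inversion of adjunction on $Y$, and $S:=f^{-1}(B)=f^*B$ is a Cartier divisor in $X$. The family $S\to B$ again satisfies \eqref{itm:stable_fibration_total_space:base_slc}--\eqref{itm:stable_fibration_total_space:fibers_slc} over a base of dimension $\dim Y-1$: the fibres are unchanged, so \eqref{itm:stable_fibration_total_space:singular_codim_one_pts} persists, and $\omega_{X/Y}^{[N]}(N\Delta_X)$ restricts to $\omega_{S/B}^{[N]}(N\Delta_X|_S)$, so \eqref{itm:stable_fibration_total_space:relative_Q_Cartier} persists. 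Let $c\in[0,1]$ be the coefficient of $S$ in $\Delta$ (it is $0$ unless $\dim Y=1$ and $y\in\Supp\Delta_Y$, in which case it equals the coefficient of $y$ in $\Delta_Y$, which is $\le 1$ by lc-ness of $(Y,\Delta_Y)$), and set $\Delta':=\Delta-cS$. By the inductive hypothesis $(S,\Diff_S\Delta')$ is slc, and $K_X+S+\Delta'=(K_X+\Delta)+(1-c)S$ is $\bQ$-Cartier, so Corollary~\ref{cor:inversion_of_adjunction} applied to the demi-normal divisor $S\subseteq X$ gives that $(X,S+\Delta')$ is slc near $S$; since $\Delta=\Delta'+cS\le S+\Delta'$, lowering the boundary yields $(X,\Delta)$ slc near $S\supseteq X_y\ni x$, which closes the induction.

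The step I expect to cost the most work is the reduction to $Y$ normal: it does not lower $\dim Y$, so it is outside the induction, and it rests on matching the normalization and conductor of $X$ with those of $X_{Y^n}$ — concretely, showing that the conductor of $X$, pulled back to $\bar X$, splits as the closure of the conductors of the fibres (matching the relative conductor of $X_{Y^n}$ over the normal $Y^n$) plus the part lying over $\Gamma$ (matching the strict transform of $(f^n)^*\Gamma$), so that the two ``lc on the normalization'' conditions literally coincide. A secondary, more routine but not entirely trivial ingredient is the package of general-position facts on $Y$ — that a general Cartier slice through a prescribed point of a normal lc pair is normal with trivial different and remains demi-normal after pulling back to $X$, with $\Diff_S\Delta'=\Delta_X|_S+(f|_S)^*\Diff_B\Delta_Y$ — where hypothesis~\eqref{itm:stable_fibration_total_space:singular_codim_one_pts} is exactly what keeps $\Delta_X$ off the codimension-one singular locus of the slice.
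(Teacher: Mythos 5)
Your preliminary steps (demi-normality of $X$, $\bQ$-Cartierness of $K_X+\Delta$) and your reduction to a normal base match the paper's Steps 1, 2 and 3.c. The gap is in the heart of the induction: you cut $Y$ with ``a general Cartier divisor $B$ through $y$'' and assert that $B$ is normal with $(B,\Diff_B\Delta_Y)$ lc. A general member of a linear system \emph{constrained to pass through $y$} is not a general member of a base-point-free system; at the imposed base point $y$ the pair $(Y,B+\Delta_Y)$ --- and hence, by adjunction, $(B,\Diff_B\Delta_Y)$ --- can fail to be lc. Concretely, take $Y=\bA^2$ and $\Delta_Y=\frac{1}{2}(L_1+L_2+L_3+L_4)$ with $L_i$ general lines through the origin $y$: this pair is lc, but \emph{every} curve $B$ through $y$ satisfies $\mult_y(\Delta_Y+B)\geq 3$, so $(Y,\Delta_Y+B)$ is never lc at $y$, and $\Diff_B\Delta_Y$ acquires coefficient $\geq 2$ at $y$. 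Thus the auxiliary family $S\to B$ fails hypothesis \eqref{itm:stable_fibration_total_space:base_slc}, and the induction cannot proceed at such a point. Your coefficient correction $\Delta'=\Delta-cS$ only addresses the case where $B$ itself is a component of $\Delta_Y$ (relevant essentially only when $\dim Y=1$); it does nothing about the singularity of the pair $(Y,\Delta_Y)$ at the prescribed point $y$, which is the actual obstruction.

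This is precisely the difficulty the paper's Steps 3.a--3.b are built to avoid. There one first pulls the whole family back along a crepant log resolution $(\tilde Y,\Delta_{\tilde Y})\to(Y,\Delta_Y)$, so the base becomes smooth with snc boundary; on a log-smooth base one may enlarge the boundary to a reduced snc divisor having $d$ components through $y$ (adding snc components keeps the pair lc, and proving slc-ness for a larger boundary suffices), and one then slices along an \emph{actual boundary component} $f^*\Delta_1$ rather than a general divisor through $y$. The conclusion descends to $Y$ because the resolution is crepant. If you want to keep your slicing strategy, you must insert this log-resolution step before cutting; as written, the inductive step fails whenever $y$ lies on a sufficiently deep lc center of $(Y,\Delta_Y)$.
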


\begin{proof}
\emph{Step 1: $X$ is demi-normal.}
$X$ is $S_2$ by \cite[Lemma 4.2]{Patakfalvi_Schwede_Depth_of_F_singularities}. Furthermore, every codimension one
point $x \in X$ is either 
\begin{itemize}
\item a smooth point of a fiber over a smooth point 
\item a nodal point of a fiber over a smooth point or
\item  a smooth point of a fiber over a nodal point. 
\end{itemize}
In either case $x$ is a nodal point. 

\emph{Step 2: $K_X + \Delta$ is $\bQ$-Cartier.}
By possibly increasing $N$ we may assume that $N(K_Y + \Delta_Y)$ is Cartier. Consider then the line bundle
\begin{equation}
\label{eq:stable_fibration_total_space:log_canonical_divisor}
f^* \left( \omega_Y^{[N]} (N \Delta_Y) \right) \otimes \omega_{X/Y}^{[N]}(N \Delta_X).
\end{equation}
By throwing out codimension at most two closed subsets we may find an open set $V \subseteq X$ such that $f|_V$ and $Y|_{f(V)}$ are Gorenstein and $N \Delta_X|_V$ and $N \Delta_Y|_{f(V)}$  are Cartier. Then we see that the line bundle \eqref{eq:stable_fibration_total_space:log_canonical_divisor} is isomorphic over $V$ to $\sO_X(N(K_X + \Delta))$. However,  since both $\sO_X(N(K_X + \Delta))$ and the line bundle \eqref{eq:stable_fibration_total_space:log_canonical_divisor} are $S_2$ sheaves, they are isomorphic by \cite[Theorem 1.12]{Hartshorne_Generalized_divisors_on_Gorenstein_schemes}. This shows that $K_X + \Delta$ is $\bQ$-Cartier indeed.


\emph{Step 3: the discrepancies are at least $-1$.}
We prove this by induction on $d : = \dim Y$. For $d=0$, $X$
coincides with its only fiber, hence all the statements are immediate. So, it is enough to show the inductional step. 

\emph{Step 3.a: the inductional step, when $(Y,\Supp \Delta_Y)$ is log-smooth.}
First, we show the inductional step when $Y$ is smooth and $\supp \Delta_Y$ has simple normal crossings. It is enough to prove that $(X, f^* \Delta_Y + \Delta_X)$ is slc near every point $x \in X$. So fix $x \in X$ and let $y := f(x)$.   Let $\Delta_Y = \sum_{i=1}^r a_i \Delta_i$, where $\Delta_i$ are distinct prime divisors, and $a_i \neq 0$. Since increasing $a_i$ does not decrease the discrepancies and $\Delta_i$ are Cartier divisors, we may assume that $a_i=1$ for every $i$. Furthermore, since we work locally around $x$ we may also assume that $y \in \Delta_i$ for all $i$. Then since adding more divisors does not decrease the discrepancies, by possibly further restricting around $x$, we may also assume that $r=d$. That is, there are $d$ components of $\Delta_Y$ meeting in normal crossings at $y$. Define then $\Gamma:= f^* (\Delta_Y - \Delta_1)$, and $S:= f^*\Delta_1$. By the inductional hypothesis, $(S, \Gamma + \Delta_X|_S)$ is slc. Then we may apply Corollary 
\ref{cor:inversion_of_adjunction} to $(X, S + \Gamma + \Delta_X)$ to obtain that so is $(X, \Delta)$. This finishes the proof of step 3.a.

\emph{Step 3.b: when $(Y,\Delta_Y)$ is log canonical.} Take a crepant log-resolution $\nu \colon (\tilde{Y},\Delta_{\tilde{Y}}) \to (Y,\Delta_Y)$ (i.e., which satisfies $\nu^*(K_Y + \Delta_Y ) = K_{\tilde{Y}} + \Delta_Y$ \cite[Notation 2.6]{Kollar_Singularities_of_the_minimal_model_program}). Note that then $(\tilde{Y},\Delta_{\tilde{Y}})$ is log canonical and $(\tilde{Y}, \supp \Delta_{\tilde{Y}})$ is log-smooth. Let $\tilde{X}:= X \times_Y \tilde{Y}$, $\tilde{f}:=f \times_Y \tilde{Y}$ and $\tilde{\nu}:= \nu \times_Y X$. First we claim that the assumptions of the lemma hold also for $(\tilde{Y}, \Delta_{\tilde{Y}})$ and $(\tilde{X}, \Delta_{\tilde{X}})$, where $\Delta_{\tilde{X}}:= \tilde{\nu}^* \Delta_X$. Indeed, the only thing that has to be checked is that $\omega_{\tilde{X}/\tilde{Y}}^{[N]}(N \Delta_{\tilde{X}})$ is a line bundle. However, this sheaf agrees in relative codimension one with $\tilde{\nu}^* \omega_{X/Y}^{[N]}(N \Delta_X)$, which is a line bundle. Further, it is reflexive by 
\cite[Corollary 3.7]{Hassett_Kovacs_Reflexive_pull_backs} and then  isomorphic to $\tilde{\nu}^* \omega_{X/Y}^{[N]}(N \Delta_X)$ by \cite[Proposition 3.6.2]{Hassett_Kovacs_Reflexive_pull_backs}. This proves our claim and then by the previous point $(\tilde{X}, \tilde{\Delta} ) :=(\tilde{X},
\tilde{f}^* \Delta_{\tilde{Y}} + \Delta_{\tilde{X}})$ is slc. Consider then the following 
stream of equalities, where we assume a compatible choice of canonical and relative canonical divisors.
\begin{align*}
 K_{\tilde{X}} + \tilde{\Delta} 
& = K_{\tilde{X}/\tilde{Y}} + \Delta_{\tilde{X}} +  \tilde{f}^*(K_{\tilde{Y}} + \Delta_{\tilde{Y}}) \\
& = \tilde{\nu}^* (K_{X/Y} + \Delta_X) + \tilde{f}^*\nu^* (K_{Y} + \Delta_Y) 
\\ & = \tilde{\nu}^* ( K_{X/Y} + \Delta_X + f^*( K_{Y} + \Delta_Y) ) 
\\ & = \tilde{\nu}^* ( K_{X} +  \Delta )
\end{align*}
This shows that $(X, \Delta)$ is slc as well, using \cite[Lemma 2.30]{Kollar_Mori_Birational_geometry_of_algebraic_varieties} and \cite[Definition-Lemma 5.10]{Kollar_Singularities_of_the_minimal_model_program}.

\emph{Step 3.c: when $(Y,\Delta_Y)$ is slc.} Let $\pi \colon (\oY,D) \to Y$ be the normalization of $Y$. Define $\oX:=X \times_Y \oY$, $E := X \times_Y \oD$, $\overline{\Delta}_Y:=\pi^* \Delta_Y$, $\of:=f \times_Y \oY$, $\tilde{\pi}:= \pi \times_Y X$ and $\overline{\Delta}_X:=\tilde{\pi}^* \Delta_X$. Similarly to as in the previous point, the assumptions of the lemma hold for $(\oX, \overline{\Delta}_X)$ and $(\oY, D + \overline{\Delta}_Y)$.  Further by the statement of the previous point, $(\oX, \of^* (D + \overline{\Delta}_Y) + \overline{\Delta}_X)=(X, E + \tilde{\pi}^* \Delta)$ is slc. Let $\rho\colon (\tilde{X},F) \to \oX$ be then the normalization of $\oX$. Note that $\tilde{\pi} \circ \rho$ is also a normalization of $X$ with  conductor divisor $F + \rho^* E$. Then the following holds using \cite[Definition-Lemma 5.10]{Kollar_Singularities_of_the_minimal_model_program} twice.
\begin{align*}
(X,  \Delta) \textrm{ is slc } 
& \Leftrightarrow \\
(\tilde{X}, F + \rho^* E + (\tilde{\pi} \circ \rho)^*  \Delta) \textrm{ is lc}
& \Leftrightarrow \\
(X, E + \tilde{\pi}^*  \Delta) \textrm{ is slc}
\end{align*}
However, we know that $(X, E + \tilde{\pi}^* \Delta)$ is slc by Step 3.b, as we have mentioned already. This finishes our proof. 
\end{proof}

\subsection{Definition of $F$}

This section contains the definition of the forgetful morphism $F$ of Theorem \ref{thm:main}, using Lemma \ref{lem:stable_fibration_total_space} from Section \ref{sec:adjunction}.  The statements of Section \ref{sec:adjunction} tell us that the composition $f$ of a fibration of stable varieties as in \eqref{eq:fibration_of_stable_schemes} have stable fibers. Here we check that Koll\'ar's condition (c.f., Definition \ref{defn:stable_family}) also holds for $f$. We  start with auxiliary statements.

\begin{lem}
\label{lem:relatively_S_d}
Let $f \colon \sX \to \sY$ and $g \colon \sY \to \sZ$ be flat morphisms of noetherian DM-stacks, and $\sF$ and $\sG$ coherent sheaves on $\sX$ and $\sY$, respectively. Further assume that $\sX$ and $\sY$ are flat and relatively $S_d$ over $\sY$ and $\sZ$, respectively. Then $\sF \otimes f^* \sG$ is flat and relatively $S_d$ over $\sZ$.
\end{lem}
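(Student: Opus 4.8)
The plan is to reduce the statement to a local algebra computation governed by the depth and dimension formulas for flat local homomorphisms. Flatness and the $S_d$ property are \'etale-local, and by the paper's conventions $S_d$ for a DM-stack is tested on \'etale covers by schemes, while \'etale morphisms are flat with regular fibers; so I would first choose compatible surjective \'etale atlases and reduce to the case where $\sX=\Spec B$, $\sY=\Spec A$, $\sZ=\Spec R$ are affine schemes with structure maps forming a chain $R\to A\to B$, $\sF=\wt M$ for a finite $B$-module $M$ that is flat over $A$ with $S_d$ fibers over $\Spec A$, and $\sG=\wt N$ for a finite $A$-module $N$ that is flat over $R$ with $S_d$ fibers over $\Spec R$. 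Via the canonical isomorphism $M\otimes_B(N\otimes_A B)\cong M\otimes_A N$, the sheaf $\sF\otimes f^{*}\sG$ corresponds to $M\otimes_A N$, viewed as a $B$-module through the first factor.

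Flatness of $M\otimes_A N$ over $R$ is then routine: for an injection $R'\hookrightarrow R''$ of $R$-modules one has $(M\otimes_A N)\otimes_R R'=M\otimes_A(N\otimes_R R')$, and $N\otimes_R R'\hookrightarrow N\otimes_R R''$ by flatness of $N$ over $R$, and this remains injective after applying $M\otimes_A(-)$ by flatness of $M$ over $A$.

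For relative $S_d$ over $\sZ$, I would fix a point $z\in\Spec R$ and base change along $R\to\ka(z)$; since pullback commutes with base change, flatness of $M$ over $A$ persists, the relevant fibers of $M$ remain $S_d$, and the fibers of $N$ become the single $S_d$ module $N\otimes_R\ka(z)$. Hence it suffices to show: if $N$ is a finite $S_d$ $A$-module and $M$ a finite $B$-module, flat over $A$, with $S_d$ fibers, then $M\otimes_A N$ is $S_d$. Pick $\frq\in\Spec B$ over $\frp\in\Spec A$ with $(M\otimes_A N)_\frq\neq0$ (otherwise there is nothing to check); applying the dimension and depth formulas for the flat local homomorphism $A_\frp\to B_\frq$ and the finite modules $N_\frp$ and $M_\frq$ (with $M_\frq$ flat over $A_\frp$) --- that is, EGA~IV, Prop.~6.1.2 and Prop.~6.3.1 --- gives
\begin{align*}
\dim\bigl((M\otimes_A N)_\frq\bigr)&=\dim N_\frp+\dim\bigl(M_\frq/\frp M_\frq\bigr),\\
\depth\bigl((M\otimes_A N)_\frq\bigr)&=\depth N_\frp+\depth\bigl(M_\frq/\frp M_\frq\bigr),
\end{align*}
where $M_\frq/\frp M_\frq$ is the localization at $\frq$ of a fiber of $\sF$ over $\Spec A$, hence $S_d$. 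Setting $a=\depth N_\frp$, $a'=\dim N_\frp$, $b=\depth(M_\frq/\frp M_\frq)$, $b'=\dim(M_\frq/\frp M_\frq)$, the hypotheses give $a\ge\min(d,a')$, $b\ge\min(d,b')$, together with $a\le a'$ and $b\le b'$. A short case check --- if $a'\ge d$ then $a\ge d$; if $b'\ge d$ then $b\ge d$; if $a'<d$ and $b'<d$ then $a=a'$ and $b=b'$ --- yields $a+b\ge\min(d,a'+b')$ in every case, which is precisely the $S_d$ inequality $\depth(M\otimes_A N)_\frq\ge\min\bigl(d,\dim(M\otimes_A N)_\frq\bigr)$.

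The mathematical content is carried entirely by the two EGA formulas, so I do not expect a genuine obstacle; the only mildly delicate point is the bookkeeping in the reduction from DM-stacks to affine schemes, namely choosing \'etale atlases $X\to Y\to Z$ of $\sX,\sY,\sZ$ compatible with $f$ and $g$ and checking that all the hypotheses and the conclusion are \'etale-local.
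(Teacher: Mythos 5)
Your proposal is correct and follows essentially the same route as the paper: reduce to schemes via \'etale atlases, prove flatness by tensoring an injection of modules over $\sZ$ first through the flatness of $\sG$ ($=N$) and then through that of $\sF$ ($=M$), and handle relative $S_d$ fiberwise. The only difference is that where the paper simply cites \cite[Lemma 4.2]{Patakfalvi_Schwede_Depth_of_F_singularities} for the $S_d$ step, you unfold that lemma and prove it directly from the dimension and depth formulas of \cite[6.1.2, 6.3.1]{Grothendieck_Elements_de_geometrie_algebrique_IV_II} together with the short case check on $a+b\ge\min(d,a'+b')$ (which is correct); note also that, like the paper's own proof, you read the flatness/$S_d$ hypothesis as applying to the sheaves $\sF$ and $\sG$ rather than to $\sX$ and $\sY$ as the statement literally says --- this is the reading required by the later applications of the lemma.
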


\begin{proof}
First note that by passing to \'etale atlases we may assume that all stacks are schemes. Second, we show that $\sF \otimes f^* \sG$ is flat over $\sZ$. Consider an embedding $\sI \to \sO_{\sZ}$. Then by flatness of $\sG$ over $\sZ$, $\sG \otimes g^* \sI \to \sG \otimes g^* \sO_Z \cong \sG$ is an injection. However, then by flatness of $\sF$ over $\sY$ the following map is injective as well, which concludes flatness by \cite[Proposition 9.1A.a]{Hartshorne_Algebraic_geometry}.
\begin{equation*}
(\sF \otimes f^* \sG) \otimes f^* g^* \sI \cong \sF \otimes f^* (\sG \otimes g^* \sI) \to \sF \otimes f^* \sG \cong (\sF \otimes f^* \sG)  \otimes f^* g^* \sO_Z 
\end{equation*}
Finally apply \cite[Lemma 4.2]{Patakfalvi_Schwede_Depth_of_F_singularities} to obtain the statement about the relative $S_d$ property. 

\end{proof}

\begin{lem}
\label{lem:tower_of_stable_schemes_relative_canonical_relative_S_2}
Given a fibration of stable varieties as in \eqref{eq:fibration_of_stable_schemes}, $\sO_{X_i}$ and $\omega_{X_i/X_{i-1}}^{[m]}$ are flat and relatively $S_2$ over $X_j$ for every $0 \leq j < i \leq 2$, $m \in \bZ$.
\end{lem}

\begin{proof}
The statement is immediate for $\sO_{X_i}$ using Lemma \ref{lem:relatively_S_d}. For $\omega_{X_i/X_{i-1}}^{[m]}$ first we show the statement for $j=i-1$. Since $f_i$ is a family of stable varieties, flatness follows from Definition \ref{defn:stable_family}. It also follows from Definition \ref{defn:stable_family}, that $\omega_{X_i/X_{i-1}}^{[m]}|_{F} \cong \omega_F^{[m]}$ for every fiber $F$ of $f_i$. However, since $F$ is $S_2$ and $G_1$, the reflexive hull $\omega_F^{[m]}$ is $S_2$ as well \cite[Theorem 1.9]{Hartshorne_Generalized_divisors_on_Gorenstein_schemes}. This concludes the statement for $j=i-1$. For $j<i-1$, use Lemma \ref{lem:relatively_S_d}.
\end{proof}

\begin{lem}
\label{lem:relative_dualizing_reflexive}
Given a fibration of stable varieties as in \eqref{eq:fibration_of_stable_schemes},
\begin{equation*}
\omega_{X/Y}^{[m]} \otimes g^* \omega_{Y/Z}^{[m]} \cong \omega_{X/B}^{[m]}. 
\end{equation*}
Furthermore, it is flat and relatively $S_2$ over $B$.
\end{lem}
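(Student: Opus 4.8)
The plan is to verify the asserted isomorphism $\omega_{X/Y}^{[m]} \otimes g^* \omega_{Y/B}^{[m]} \cong \omega_{X/B}^{[m]}$ in relative codimension one, where all sheaves involved are honest line bundles, and then upgrade to a global isomorphism using reflexivity and the relative $S_2$ property. First I would record that by Lemma \ref{lem:tower_of_stable_schemes_relative_canonical_relative_S_2} each of $\omega_{X/Y}^{[m]}$, $\omega_{Y/B}^{[m]}$ is flat and relatively $S_2$ over $B$, and then invoke Lemma \ref{lem:relatively_S_d} (with $d=2$, $\sF=\omega_{X/Y}^{[m]}$, $\sG=\omega_{Y/B}^{[m]}$, and the composite $g$) to conclude that the tensor product $\omega_{X/Y}^{[m]} \otimes g^* \omega_{Y/B}^{[m]}$ is itself flat and relatively $S_2$ over $B$. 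Since $X$ is relatively $G_1$ and $S_2$ over $B$, the sheaf $\omega_{X/B}^{[m]}$ is also flat and relatively $S_2$ over $B$ (again by Lemma \ref{lem:tower_of_stable_schemes_relative_canonical_relative_S_2} applied to the composite family). This already gives the ``furthermore'' clause once the isomorphism is established.

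For the isomorphism itself, I would work over the open locus $U \subseteq X$ where $f_1 = g$ and $f_1 \circ \ldots$ (the structure maps) are all relatively Gorenstein and $U \to B$ is relatively Gorenstein; by flatness and the fact that the fibers are $S_2$ and $G_1$, the complement of $U$ has relative codimension at least two in every fiber over $B$, hence codimension at least two in $X$ (using flatness over $B$ to add dimensions). On $U$ the reflexive powers are ordinary powers of honest relative dualizing line bundles, and one has the standard composition formula $\omega_{U/B}^{m} \cong \omega_{U/Y}^{m} \otimes g^* \omega_{Y/B}^{m}$ coming from the transitivity triangle $g^* \omega_{Y/B} \to \omega_{X/B} \to \omega_{X/Y}$ for a composite of Gorenstein morphisms (equivalently, the standard isomorphism $\omega_{X/B} \cong \omega_{X/Y} \otimes g^* \omega_{Y/B}$ in the relatively Gorenstein setting, raised to the $m$-th power). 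Since $Y$ is relatively $S_2$, $G_1$ over $B$ and $g$ is flat, $g^* \omega_{Y/B}^{[m]}$ agrees with $g^* \omega_{Y/B}^{m}$ on $U$ as well.

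Finally I would glue: the line bundle $\omega_{X/Y}^{[m]} \otimes g^* \omega_{Y/B}^{[m]}$ and $\omega_{X/B}^{[m]}$ are both reflexive (the former because it is relatively $S_2$ over $B$ with the ambient scheme relatively $G_1$, $S_2$, so in particular $S_2$ and $G_1$ absolutely — here I would cite \cite[Corollary 3.7]{Hassett_Kovacs_Reflexive_pull_backs} or \cite[Theorem 1.9]{Hartshorne_Generalized_divisors_on_Gorenstein_schemes} to see that the tensor product is reflexive) and they are isomorphic on $U$, whose complement has codimension $\geq 2$; hence the isomorphism extends uniquely over all of $X$ by \cite[Theorem 1.12]{Hartshorne_Generalized_divisors_on_Gorenstein_schemes}. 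I expect the main obstacle to be the bookkeeping needed to see that $\omega_{X/Y}^{[m]} \otimes g^* \omega_{Y/B}^{[m]}$ is reflexive (not merely $S_2$ relatively over $B$) — this requires knowing that a relatively $S_2$ sheaf over $B$ on a scheme that is itself $S_2$ and $G_1$ is reflexive, which follows from \cite[Theorem 1.9]{Hartshorne_Generalized_divisors_on_Gorenstein_schemes} combined with the relative depth computations of \cite[Lemma 4.2]{Patakfalvi_Schwede_Depth_of_F_singularities}, but should be spelled out. The Gorenstein-locus composition formula and the codimension estimate on $X \setminus U$ are routine.
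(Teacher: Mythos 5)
Your first step and the codimension-one verification match the paper: the paper likewise combines Lemma \ref{lem:tower_of_stable_schemes_relative_canonical_relative_S_2} with Lemma \ref{lem:relatively_S_d} to see that $\omega_{X/Y}^{[m]}\otimes g^*\omega_{Y/B}^{[m]}$ is flat and relatively $S_2$ over $B$, and then observes that it agrees with $\omega_{X/B}^{[m]}$ in relative codimension one. The divergence is in the gluing step, and there your route has a genuine gap. You propose to extend the isomorphism via absolute reflexivity and \cite[Theorem 1.12]{Hartshorne_Generalized_divisors_on_Gorenstein_schemes}, which requires $X$ itself to satisfy $G_1$ and $S_2$. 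But $B$ here is an arbitrary test scheme --- in the intended applications it is the spectrum of an Artinian local ring --- and if $B$ is non-reduced (or not Gorenstein) then $X$ need not be $G_1$, the absolute reflexive hull is not the right object, and Hartshorne's extension theorem does not apply. The worry you flag about upgrading ``relatively $S_2$'' to ``reflexive'' is in fact the symptom of this: no such upgrade is available or needed. The correct tool is the \emph{relative} extension result \cite[Proposition 3.6]{Hassett_Kovacs_Reflexive_pull_backs} (the scheme version of the paper's Lemma \ref{lem:stacky_Hassett_Kovacs}): two sheaves that are flat and relatively $S_2$ over $B$ and isomorphic in relative codimension one are isomorphic, the point being that flatness plus fiberwise depth $\geq 2$ along the bad locus already kills the relevant local cohomology, with no hypothesis on the absolute singularities of $X$. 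This is exactly how the paper closes the argument.

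A smaller issue: you derive the ``furthermore'' clause by applying Lemma \ref{lem:tower_of_stable_schemes_relative_canonical_relative_S_2} to the composite family $X\to B$, but at this stage it is not yet known that $f$ is a family of stable varieties (that is Lemma \ref{lem:forgetful_yields_stable_family}, which is proved \emph{using} the present lemma), so that reading is circular. Your alternative reading --- that flatness and relative $S_2$-ness of $\omega_{X/B}^{[m]}$ follow from the established isomorphism with the tensor product --- is the right one and is what the paper intends.
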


\begin{proof}
By Lemma \ref{lem:tower_of_stable_schemes_relative_canonical_relative_S_2} and  Lemma \ref{lem:relatively_S_d},  $\omega_{X/Y}^{[m]} \otimes g^* \omega_{Y/B}^{[m]}$ is flat and relatively $S_2$ over $B$. Furthermore it is isomorphic to $\omega_{X/B}^{[m]}$ in relative codimension one. Hence, \cite[Proposition 3.6]{Hassett_Kovacs_Reflexive_pull_backs}, concludes our proof.
\end{proof}

\begin{lem}
\label{lem:nef}
Given a family $X \to B$ of stable varieties, $\omega_{X/B}$ is nef (as a $\bQ$-line bundle).
\end{lem}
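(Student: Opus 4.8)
The plan is to check the defining inequality for nefness on complete curves in $X$, to dispose of the curves contained in fibres using that $\omega_{X_b}$ is ample, and to reduce the remaining (horizontal) case to a statement over a smooth projective base curve, where it follows from the semi-positivity of the pushforward $f_*\omega_{X/B}^{[N]}$.

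First I would make some harmless reductions. Since $B_{\red}\hookrightarrow B$ is a base change, Koll\'ar's condition (Definition \ref{defn:stable_family}) gives that $X_{\red}\to B_{\red}$ is again a family of stable varieties with $\omega_{X_{\red}/B_{\red}}^{[m]}=\omega_{X/B}^{[m]}\big|_{X_{\red}}$, so we may assume $B$ reduced. Fix $N>0$ divisible enough that $\omega_{X/B}^{[N]}$ is a line bundle. By the fiberwise criterion for relative ampleness recalled in the Notation section, $\omega_{X/B}$ is a relatively ample $\bQ$-line bundle, because $\omega_{X_b}$ is ample on each stable fibre $X_b$ and $\omega_{X/B}^{[N]}\big|_{X_b}\cong\omega_{X_b}^{[N]}$. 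Nefness of $\omega_{X/B}$ is equivalent to nefness of $\omega_{X/B}^{[N]}$, which is tested on complete irreducible curves $C\subset X$. If $f(C)$ is a point then $C\subseteq X_b$ and $\omega_{X/B}^{[N]}\big|_C\cong\omega_{X_b}^{[N]}\big|_C$ has positive degree. If $f(C)$ is a curve, let $T$ be its normalization and $\nu\colon\widetilde C\to C$ the normalization of $C$; then $\widetilde C\to f(C)$ factors through the smooth projective curve $T$, so we get a lift $\gamma\colon\widetilde C\to X_T:=X\times_B T$. Applying Koll\'ar's condition to $T\to B$ (and using that $\omega_{X/B}^{[N]}$ is a line bundle) gives $\omega_{X_T/T}^{[N]}\cong p^*\omega_{X/B}^{[N]}$ with $p\colon X_T\to X$ the projection, whence $\deg_{\widetilde C}\gamma^*\omega_{X_T/T}^{[N]}=\omega_{X/B}^{[N]}\cdot C$. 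Since $X_T\to T$ is again a family of stable varieties, it therefore suffices to prove the lemma when $B$ is a smooth projective curve.

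So assume $B$ is a smooth projective curve. After further enlarging $N$ I may assume that $R^{>0}f_*\omega_{X/B}^{[N]}=0$ and that $\omega_{X/B}^{[N]}$ is relatively globally generated, so that the evaluation map $f^*f_*\omega_{X/B}^{[N]}\twoheadrightarrow\omega_{X/B}^{[N]}$ is surjective; flatness of $\omega_{X/B}^{[N]}$ over $B$ (part of Koll\'ar's condition) together with the constancy of $h^0(X_b,\omega_{X_b}^{[N]})=h(N)$ and the higher vanishing shows that $\cE:=f_*\omega_{X/B}^{[N]}$ is a vector bundle on $B$. Now $\cE$ is nef: this is the semi-positivity theorem for families of stable varieties, which in characteristic zero rests on the Kawamata--Koll\'ar--Viehweg theory of direct images of relative dualizing sheaves (see, e.g., \cite{Kollar_Moduli_of_varieties_of_general_type}). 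Consequently $f^*\cE$ is a nef vector bundle on $X$, and $\omega_{X/B}^{[N]}$, being a quotient of $f^*\cE$, is a nef line bundle. Hence $\omega_{X/B}$ is nef.

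The main obstacle is exactly the input of the last paragraph: the positivity of $\omega_{X/B}$ along horizontal curves is not elementary and must be imported as a semi-positivity statement for $f_*\omega_{X/B}^{[N]}$ (for curves inside fibres relative ampleness already does the job). The rest is bookkeeping with Koll\'ar's condition, the base-change behaviour of $\omega_{X/B}^{[N]}$, and the fiberwise ampleness of $\omega_{X_b}$; one should also double-check that $X_T\to T$ inherits the structure of a family of stable varieties (the fibres are unchanged, and Koll\'ar's condition is stable under composition of base changes) and that the chosen $N$ can be taken uniform on a quasi-compact base.
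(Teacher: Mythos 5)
Your proof is correct and follows essentially the same route as the paper: the key input in both is the semi-positivity (nefness) of $f_*\omega_{X/B}^{[N]}$ for divisible enough $N$ (the paper cites Fujino's semi-positivity theorem), combined with relative global generation to realize $\omega_{X/B}^{[N]}$ as a quotient of the nef bundle $f^*f_*\omega_{X/B}^{[N]}$. The paper skips your preliminary reduction to a smooth projective curve base and simply runs this argument over $B$ directly, but that is only a difference in bookkeeping, not in substance.
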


\begin{proof}
By \cite[Theorem 1.8]{Fujino_Semi_positivity_theorems_for_moduli_problems}, $f_*  \omega_{X/B}^{[m]}$ is a nef vector bundle for divisible enough $m$. Since $\omega_{X/B}$ is relatively ample,  $\omega_{X/B}$ is a relatively globally generated line bundle for  divisible enough $m$. Choose then an $m$, for which both hold. Then there is a surjection $f^* f_* \omega_{X/B}^{[m]} \to \omega_{X/B}^{[m]}$ from a nef vector bundle. Therefore, $\omega_{X/B}^{[m]}$ and hence $\omega_{X/B}^{[m]}$ is nef. 
\end{proof}

\begin{lem}
\label{lem:forgetful_yields_stable_family}
Given a fibration of stable varieties as in \eqref{eq:fibration_of_stable_schemes}, $f$ is a family of stable varieties. 
\end{lem}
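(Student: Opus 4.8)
The plan is to verify the three requirements of Definition \ref{defn:stable_family} for the composite $f=f_1\circ g\colon X\to B$ in \eqref{eq:fibration_of_stable_schemes}: that $f$ is flat, that every fibre $X_b$ is a stable variety, and that $\omega_{X/B}^{[m]}$ is flat over $B$ and commutes with arbitrary base change. Flatness of $f$ is immediate since it is a composition of the flat morphisms $g=f_2$ and $f_1$.

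Next I would treat the fibres. Fix a point $b\in B$; after replacing $k(b)$ by its algebraic closure (which affects none of the relevant properties) we may view $X_b\to Y_b\to \Spec k(b)$ as a fibration of stable varieties over a point, since flatness and Koll\'ar's condition are preserved under base change and the Hilbert functions $h_1,h_2$ are unchanged. Properness of $X_b$ follows from that of $f_1$ and $f_2$; connectedness of $X_b$ follows from connectedness of $Y_b$ and of the fibres of $g_b$; and equidimensionality (of dimension $\deg h_1+\deg h_2$) follows from equidimensionality of $Y_b$ together with the constancy of the fibre dimension of $g_b$. That $X_b$ is slc, and in particular that $K_{X_b}$ is $\bQ$-Cartier, is exactly Lemma \ref{lem:stable_fibration_total_space} applied with $\Delta_X=\Delta_Y=0$: assumption \eqref{itm:stable_fibration_total_space:singular_codim_one_pts} is then vacuous, \eqref{itm:stable_fibration_total_space:relative_Q_Cartier} holds because $g_b$ satisfies Koll\'ar's condition (as remarked before that lemma), and \eqref{itm:stable_fibration_total_space:base_slc}, \eqref{itm:stable_fibration_total_space:fibers_slc} hold because $Y_b$ and the fibres of $g_b$ are stable.

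The remaining point for the fibres, and the one I expect to carry all the content, is ampleness of $\omega_{X_b}$. By Lemma \ref{lem:relative_dualizing_reflexive} applied to the fibration over $b$ we have $\omega_{X_b}\cong \omega_{X_b/Y_b}\otimes g_b^{*}\omega_{Y_b}$ as $\bQ$-line bundles. Here $\omega_{X_b/Y_b}$ is nef by Lemma \ref{lem:nef} and is $g_b$-ample because $g_b\colon X_b\to Y_b$ is a family of stable varieties, while $\omega_{Y_b}$ is ample; hence $\omega_{X_b}$ is nef. For a nonzero class $\gamma\in\NE(X_b)$ there are two cases: if $g_{b*}\gamma=0$ then $\gamma$ lies in the closed cone generated by curves contracted by $g_b$, so $\omega_{X_b}\cdot\gamma=\omega_{X_b/Y_b}\cdot\gamma>0$ by the relative Kleiman criterion; if $g_{b*}\gamma\neq 0$ then $\omega_{X_b}\cdot\gamma=\omega_{X_b/Y_b}\cdot\gamma+\omega_{Y_b}\cdot g_{b*}\gamma>0$, the first term being $\geq 0$ by nefness and the second $>0$ since $\omega_{Y_b}$ is ample. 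By Kleiman's criterion (equivalently, by Nakai--Moishezon applied to subvarieties, separating those that dominate a subvariety of $Y_b$ from those contracted to a point, where $\omega_{X_b}$ restricts to the ample canonical of a stable variety) $\omega_{X_b}$ is ample, so $X_b$ is a stable variety.

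Finally, for Koll\'ar's condition: Lemma \ref{lem:relative_dualizing_reflexive} gives $\omega_{X/B}^{[m]}\cong \omega_{X/Y}^{[m]}\otimes g^{*}\omega_{Y/B}^{[m]}$ and asserts that this sheaf is flat and relatively $S_2$ over $B$, which settles flatness. For base change along $\tau\colon B'\to B$ write $\rho\colon X_{B'}\to X$, $\sigma\colon Y_{B'}\to Y$ and $g'\colon X_{B'}\to Y_{B'}$; since $g\circ\rho=\sigma\circ g'$ we get $\rho^{*}\omega_{X/B}^{[m]}\cong \rho^{*}\omega_{X/Y}^{[m]}\otimes (g')^{*}\sigma^{*}\omega_{Y/B}^{[m]}$. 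The base change $X_{B'}\to Y_{B'}\to B'$ is again a fibration of stable varieties, so Lemma \ref{lem:relative_dualizing_reflexive} also gives $\omega_{X_{B'}/B'}^{[m]}\cong \omega_{X_{B'}/Y_{B'}}^{[m]}\otimes (g')^{*}\omega_{Y_{B'}/B'}^{[m]}$. Applying Koll\'ar's condition for $f_2=g$ (along $\sigma$) to the first factor and for $f_1$ (along $\tau$) to the second identifies $\rho^{*}\omega_{X/B}^{[m]}$ with $\omega_{X_{B'}/Y_{B'}}^{[m]}\otimes (g')^{*}\omega_{Y_{B'}/B'}^{[m]}\cong\omega_{X_{B'}/B'}^{[m]}$; checking that this composite is the canonical base-change map completes the argument. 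The only genuinely substantive step is the ampleness claim; everything else is bookkeeping built on Lemmas \ref{lem:stable_fibration_total_space}, \ref{lem:relative_dualizing_reflexive} and \ref{lem:nef} together with the stability of Koll\'ar's condition under base change.
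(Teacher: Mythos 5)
Your proof follows the paper's own argument in all essentials: slc (hence stable) fibres via Lemma \ref{lem:stable_fibration_total_space}, ampleness of $\omega_{X/B}$ via the decomposition $\omega_{X/B}\cong\omega_{X/Y}\otimes g^*\omega_{Y/B}$ from Lemma \ref{lem:relative_dualizing_reflexive} together with the nefness from Lemma \ref{lem:nef}, and Koll\'ar's condition from the flatness and relative $S_2$-ness supplied by Lemma \ref{lem:relative_dualizing_reflexive}. The only divergence is minor: you check base-change compatibility by composing the base-change isomorphisms of $f_1$ and $f_2$ (which, as you yourself note, still requires identifying the resulting isomorphism with the canonical map, i.e.\ essentially the relative-$S_2$ extension argument), whereas the paper invokes \cite[Proposition 3.6 and Corollary 3.8]{Hassett_Kovacs_Reflexive_pull_backs} directly; both are fine, as is your more explicit cone-of-curves justification of the ampleness step that the paper merely asserts.
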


\begin{proof}
By Lemma \ref{lem:stable_fibration_total_space}, the fibers of $f$ are slc schemes. Clearly they are proper, connected and equidimensional as well. Next we prove  that $\omega_{X/B}$ is a relatively ample $\bQ$-line bundle. Indeed, by Lemma \ref{lem:relative_dualizing_reflexive}, $\omega_{X/B} \cong g^* \omega_{Y/B} \otimes \omega_{X/Y}$. Further $\omega_{Y/B}$ is relatively ample the definition of a family of stable varieties and  $\omega_{X/Y}$ is nef and relatively ample over $Y$. Then it follows that $\omega_{X/B}$ is relatively ample as well, which implies that the fibers of $f$ are stable varieties. 

Finally we have to prove that $\omega_{X/B}^{[m]}$ is flat and compatible with arbitrary base-change. By \cite[Proposition 3.6 and Corollary 3.8]{Hassett_Kovacs_Reflexive_pull_backs} this follows if $\omega_{X/B}^{[m]}$ is flat and relatively $S_2$, which we know from Lemma \ref{lem:relative_dualizing_reflexive}.
\end{proof}

\begin{defn}
Let $\um=(m_1,m_2)$ be a dimension vector and set $m:= m_1 + m_2$. Define then $F \colon \fF \fM_{\um} \to \ofM_m$ to be the functor that takes a fibration of stable varieties as in \eqref{eq:fibration_of_stable_schemes} to the family of stable varieties $f \colon X \to B$. This latter family is indeed a family of stable varieties by Lemma \ref{lem:forgetful_yields_stable_family}. The action of $F$ on the arrows is the natural one. 
\end{defn}

\section{Deformation theory of stable fibrations}
\label{sec:deformation}

\subsection{Basic definitions}

The main technical difficulty about the deformation theory of $\ofM_h$ is that by Definition \ref{defn:stable_family} not all families with stable fibers are allowed in the pseudo-functor of $\ofM_h$. The allowed deformations are sometimes called $\bQ$-Gorenstein deformations in the literature. Another, equivalent approach is to define the index-one covering stack $\sX$ of a stable variety  $X$ and identify the deformation theory of $X$ in $\ofM_h$ by the (unconstrained) deformation theory of $\sX$ \cite{Abramovich_Hassett_Stable_varieties_with_a_twist}. We implement an analolgue of the latter approach for fibrations of stable varieties. Doing that we are forced to use the theory and language of stacks \cite{Laumon_Moret_Bailly_Champs_algebrique,stacks-project}

First let us recall the necessary definitions and facts from \cite{Abramovich_Hassett_Stable_varieties_with_a_twist}. We state the definitions of \cite{Abramovich_Hassett_Stable_varieties_with_a_twist} only in the special case when polarization is given by the canonical sheaf, and we also adapt them slightly to this situation.

\begin{defn}
\label{defn:stable_stack}
A DM-stack $\sX$ is \emph{cyclotomic}, if all its stabilizers are isomorphic to cyclotomic groups. A line bundle $\sL$ on a DM-stack $\sX$ is called \emph{uniformizing}, if $\Spec_{\sX} \left( \bigoplus_{m \in \bZ} \sL^m \right)$ is representable (by an algebraic space). If $\sX \to \sB$ is a morphism of DM-stacks, then $\sL$ is called \emph{uniformizing over $\sB$} or \emph{relatively uniformizing}, if the morphism $\Spec_{\sX} \left( \bigoplus_{m \in \bZ} \sL^m \right) \to \sB$ is representable (by algebraic spaces).  A \emph{stable stack} is a cyclotomic DM-stack $\sX$, such that
\begin{itemize}
\item $\sX$ is connected and has slc singularities (in particular it is of finite type over $k$, $S_2$, reduced, nodal in codimension one and equidimensional),
\item $\sX$ is separated,
\item $\omega_{\sX}$ is a uniformizing, ample line bundle on $\sX$ and
\item the coarse moduli map $\pi \colon \sX \to X$ is isomorphism in codimension one.
\end{itemize}
A \emph{family of stable stacks} is a flat morphism $\sX \to \sB$ of DM-stacks, such that, all $\sX_b$ are stable stacks (where $b$ is a $k$-point of $\sB$), and $\omega_{\sX/\sB}$ is a uniformizing line bundle for $\sX$ over $\sB$.  
\end{defn}

\begin{defn}
\label{defn:index_one_stack}
If $X \to B$ is a family of stable varieties, then the \emph{index-one covering stack} is defined as 
\begin{equation*}
\sX := \left[ \Spec_X \left( \bigoplus_{m \in \bZ} \omega_{X/B}^{[m]} \right) / \bG_m \right] .
\end{equation*}
\end{defn}

\begin{thm} \cite[Theorem 5.3.6]{Abramovich_Hassett_Stable_varieties_with_a_twist}
\label{thm:abramovich_hassett}
The  category $\ofM_n$ of Notation \ref{notation:stable_moduli_space} is equivalent to the category $\mathfrak{Stab}_{n}$ of families of stable stacks over $k$ of dimension $n$. The isomorphism is given by the above functors
\begin{equation*}
\begin{matrix}
\ofM_n(B) & \to & \mathfrak{Stab}_{n}(B) \\
X & \mapsto & \left[ \Spec_X \left( \bigoplus_{m \in \bZ} \omega_{X/B}^{[m]} \right) / \bG_m \right] ,
\end{matrix}
\end{equation*}
and
\begin{equation*}
\begin{matrix}
\mathfrak{Stab}_{n}(B) & \to & \ofM_n(B) \\
\sX & \mapsto & \textrm{the coarse moduli space $X$ of $\sX$} .
\end{matrix}
\end{equation*}
\end{thm}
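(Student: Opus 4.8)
The plan is to check that the two assignments in the statement are well defined as pseudo-functors on $\Sch_k$ and then that they are mutually quasi-inverse, all naturally in the base $B$.

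\emph{The functor $X \mapsto \sX$ is well defined.} Let $X \to B$ be a family of stable varieties. Since $\omega_{X/B}$ is a relatively ample $\bQ$-line bundle, the $\bZ$-graded $\sO_X$-algebra $\mathcal{R} := \bigoplus_{m \in \bZ} \omega_{X/B}^{[m]}$ is quasi-coherent and locally on $X$ finitely generated (the local index of $\omega_{X/B}$ along each fiber is finite because $K$ is $\bQ$-Cartier, and is globally bounded on the proper fibers). Hence $\Spec_X \mathcal{R}$ is an algebraic space with a $\bG_m$-action whose quotient $\sX$ is a Deligne--Mumford stack whose stabilizers are cyclic of order the local index, so cyclotomic. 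By construction the line bundle attached to the grading is $\omega_{\sX/B}$; it is (relatively) uniformizing because the relative $\Spec$ of its powers is the algebraic space $\Spec_X \mathcal{R}$, and ample because its coarse descent is a positive power of $\omega_{X/B}$. The coarse moduli map $\pi \colon \sX \to X$ is an isomorphism over the relative Gorenstein locus, whose complement has codimension $\geq 2$ in every fiber, hence an isomorphism in codimension one; and since $\pi$ is crepant and \'etale-locally a cyclic cover, each $\sX_b$ inherits slc singularities from $X_b$. Finally, flatness of $\sX \to B$ together with $\sX \times_B B' \cong \sX_{B'}$ for all $B' \to B$ is exactly equivalent to Koll\'ar's condition on the $\omega_{X/B}^{[m]}$ (Definition \ref{defn:stable_family}), which in turn guarantees that all fibers $\sX_b$ are stable stacks.

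\emph{The functor $\sX \mapsto X$ is well defined.} For a family of stable stacks $\sX \to B$, the coarse moduli space $\pi \colon \sX \to X$ exists by the Keel--Mori theorem, and since all stabilizers are linearly reductive its formation commutes with base change \cite[Lemma 2.3.3]{Abramovich_Vistoli_Compactifying_the_space_of_stable_maps}; thus $X_b$ is the coarse space of $\sX_b$. Then $X_b$ is proper, connected, equidimensional, carries the ample line bundle $\omega_{X_b}$ (the coarse descent of $\omega_{\sX_b}$), and is slc because $\pi$ is crepant and an isomorphism in codimension one, so $X_b$ is a stable variety. As $\omega_{\sX/B}$ is a genuine line bundle and $\pi$ is an isomorphism in codimension one over each fiber, $\pi_* \omega_{\sX/B}^{\otimes m}$ is a reflexive $\sO_X$-sheaf agreeing with $\omega_{X/B}^{[m]}$ in codimension one, hence equal to it. Exactness of $\pi_*$ on these sheaves and its compatibility with base change (again by linear reductivity), together with flatness of $\omega_{\sX/B}^{\otimes m}$ over $B$, give that $X \to B$ is flat and that $\omega_{X/B}^{[m]}$ satisfies Koll\'ar's condition; in particular $X \to B$ is a family of stable varieties of the same relative dimension.

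\emph{Quasi-inverse and the main obstacle.} In one direction, the coarse space of $\sX = [\Spec_X \mathcal{R}/\bG_m]$ is $\Spec_X \mathcal{R}_0 = X$ since $\mathcal{R}_0 = \sO_X$, and this is natural in $B$. In the other direction one needs the reconstruction statement: a cyclotomic Deligne--Mumford stack $\sX$ with a relatively uniformizing line bundle $\sL$ and coarse space $\pi$ is canonically isomorphic to $[\Spec_X(\bigoplus_m \pi_* \sL^{\otimes m})/\bG_m]$ --- indeed $\Spec_{\sX}(\bigoplus_m \sL^{\otimes m})$ is representable by the very definition of uniformizing, is a $\bG_m$-torsor over $\sX$, and descends along $\pi$ to the relative $\Spec$ of the pushed-forward algebra; applying this with $\sL = \omega_{\sX/B}$ and using $\pi_* \omega_{\sX/B}^{\otimes m} = \omega_{X/B}^{[m]}$ from the previous paragraph recovers the index-one covering stack. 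All these identifications being natural with respect to Cartesian morphisms over $\Sch_k$, one obtains an equivalence of categories fibered in groupoids. I expect the main obstacle to be precisely this reconstruction statement, inseparably combined with propagating everything through families: the base-change compatibility of coarse moduli spaces, of $\pi_*$, and of the algebra $\mathcal{R}$ is where Koll\'ar's condition is both essential and used, and making the flatness of $X \to B$ and the slc property of the fibers of both $\sX$ and $X$ precise requires careful use of the codimension-one and crepancy arguments.
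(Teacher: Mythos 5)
This statement is not proved in the paper at all: it is quoted verbatim from Abramovich--Hassett \cite[Theorem 5.3.6]{Abramovich_Hassett_Stable_varieties_with_a_twist}, so there is no internal proof to compare your attempt against. Your outline does follow the strategy of the cited source --- the seesaw between the section-ring construction $X \mapsto [\Spec_X(\bigoplus_m \omega_{X/B}^{[m]})/\bG_m]$ and the coarse moduli space, with the reconstruction of a polarized cyclotomic stack from its coarse space and the pushed-forward section algebra correctly identified as the crux, and with Koll\'ar's condition correctly matched to flatness and base-change compatibility of the graded pieces --- so as a sketch of the external argument it is sound.
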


\begin{defn}
If $X$ is a stable variety $X$, then the deformation functor of $X$ in $\ofM_h$ is denoted by $\Def_{\bQ}(X)$. That is, $\Def_{\bQ}(X)$ assigns to a local Artinian ring $A$ the set of families of stable varieties over $\Spec A$ that restrict to $X$ over the closed point of $\Spec A$. This agrees with the set of flat deformations of the scheme $X$ obeying Koll\'ar's condition form Definition \ref{defn:stable_family}. By Theorem \ref{thm:abramovich_hassett} it also agrees with the set of flat deformations of the index-one cover $\sX$ of $X$, or shortly $\Def_{\bQ}(X)=\Def(\sX)$. Notice that here we used the fact that a flat deformation of a stable stacks over a local Artinian ring is automatically a family of stable stacks. Indeed, the representability condition in Definition \ref{defn:stable_stack} is decided at geometric points by \cite[Lemma 4.4.3]{Abramovich_Vistoli_Compactifying_the_space_of_stable_maps}.
\end{defn}

The goal of Section \ref{sec:deformation} is to prove an analogue of Theorem \ref{thm:abramovich_hassett} for fibrations of stable varieties. The main issue will be to find a stacky object that encodes all $\bQ$-Gorenstein deformations of a fibration of stable varieties. Unfortunately, it will be somewhat lengthy to prove that this is indeed the case. Further technical difficulties will arise from the fact that the obtained stack cover of $X$ will be slightly different from the index-one-cover.
The above mentioned stacky object is  as follows.

\begin{defn}
\label{defn:tower_of_stable_stacks}
A \emph{fibration of stable stacks} $\usX=(\sX_i, \tf_i)$ is a commutative diagram
\begin{equation}
\label{eq:tower_of_stable_stacks}
\begin{split}
\xymatrix{
     \sX = \sX_2 \ar@/^2pc/[rr]^{\tilde{f}}  \ar[r]^{\tg=\tf_{2}} & \sX_{1} \ar[r]^{\tf_{1}} 
 & \sX_0 = B
} ,
\end{split}
\end{equation} 
where all $\tf_i$ are families of stable stacks. The \emph{coarse fibration} of a fibration of stable stacks as in \eqref{eq:tower_of_stable_stacks} is the fibration formed by the coarse  moduli spaces $X_i$ of $\sX_i$, shown in the following commutative diagram.
\begin{equation}
\label{eq:tower_of_stable_stacks_coarse_maps}
\begin{split}
\xymatrix{
    \sX =  \sX_2 \ar@/^2pc/[rr]^{\tilde{f}}  \ar[r]^{\tg=\tf_{2}} \ar[d]^{\gamma} & \sY= \sX_{1} \ar[r]^{\tf_{1}} \ar[d]^{\pi} &
 \sX_0 = B \ar@{=}[d] \\
     X= X_2 \ar@/_2pc/[rr]^{f}  \ar[r]^{g=f_{2}} & Y= X_{1} \ar[r]^{f_1} &
 X_0 = B
} ,
\end{split}
\end{equation} 
A fibration of stable stacks as in \eqref{eq:tower_of_stable_stacks}, is \emph{admissible},  if for all sufficiently  divisible $m$, the sheaves $\pi_* \tg_* \omega_{\sX/\sY}^m$  are locally free, where $\pi$ is the morphism of \eqref{eq:tower_of_stable_stacks_coarse_maps}.  

\end{defn}
So, the main goal of the section is to prove an analogue of Theorem \ref{thm:abramovich_hassett} for fibrations.  Similarly to Theorem \ref{thm:abramovich_hassett},  we obtain a fibration of schemes from a fibration of stable stacks by taking coarse moduli spaces as in \eqref{eq:tower_of_stable_stacks_coarse_maps}. To guarantee that this fibration of schemes is a fibration of stable varieties, we need the admissibility condition of Definition \eqref{defn:tower_of_stable_stacks}. Loosely speaking it guarantees that  $\Proj_{\sY} \left( \bigoplus_{m \geq 0 } \tg_* \omega_{\sX/\sY}^m \right)$, which is in certain sense a
relative coarse moduli space of $\sX$ over $\sY$, is the pullback of $X$ via $\sY \to Y$. See the proof of Lemma \ref{lem:coarse_tower_is_a_tower_of_stable_schemes} and Remark \ref{rem:coarse_tower_with_proj} for details.

Similarly when passing from a fibration of stable varieties $(X \to Y \to B)$ to a fibration of stable stacks $(\sX \to \sY \to B)$, we cannot simply take $\sX$ to be the  index-one covering stack of $X$, since then $\tg$ would not be a family of stable stacks. What we can do is the following.

\begin{defn}
\label{defn:index_one_tower}
Given a fibration of stable varieties  as in \eqref{eq:fibration_of_stable_schemes}, define its \emph{index-one cover} as the fibration of stable stacks $(\sX \to \sY \to B)$, where $\sY$ is the index-one cover of $Y$ over $B$ and
\begin{equation*}
\sX:= \left[  \Spec_{X} \left( \bigoplus_{m \in \bZ} \omega_{X/Y}^{[m]} \right) / \bG_m \right] \times_{Y} \sY . 
\end{equation*}
This definition does make sense according to Lemma \ref{lem:index_one_tower_is_a_tower_of_stable_stacks}. We usually denote the natural morphisms $\sX \to X$ and $\sY \to Y$ by $\gamma$ and $\pi$, respectively. 
\end{defn}

\subsection{Auxiliary statements}

To prove the fibration version of Theorem \ref{thm:abramovich_hassett} we need a few shorter technical statements.

\begin{lem}
\label{lem:stack_coarse_pushforward}
Let $\sX$ be a separated Deligne-Mumford stack over the scheme $U$ and $\sF$ a flat coherent sheaf on $\sX$. Denote by $\pi \colon \sX \to X$ the coarse moduli map. Then
\begin{enumerate}
\item \label{itm:stack_coarse_pushforward:flat}
$\pi_* \sF$ is flat and
\item \label{itm:stack_coarse_pushforward:S_r}
if $\sF$ is also relatively $S_r$ with relatively pure dimensional support so is $\pi_* \sF$.
\end{enumerate}
\end{lem}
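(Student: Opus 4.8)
The plan is to reduce both statements to local computations on the coarse space, where the stack looks étale-locally like a finite group quotient of an affine scheme. First I would recall the local structure of a separated DM-stack: étale-locally on $X$, the map $\pi\colon \sX \to X$ is of the form $[W/G] \to W/G$ with $W = \Spec R$ affine and $G$ a finite group acting on $W$; since we are in characteristic zero, $|G|$ is invertible, so $\pi_* = (-)^G$ is an exact functor on quasi-coherent sheaves, and it commutes with flat base change on $X$ (using that étale base change and averaging by $G$ commute). A flat coherent sheaf $\sF$ on $[W/G]$ is a $G$-equivariant coherent sheaf on $W$ that is flat over $U$; its pushforward is the $R$-module of $G$-invariants $M^G$ where $M = \Gamma(W,\sF)$.

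For part \eqref{itm:stack_coarse_pushforward:flat}, I would argue that $M^G$ is a direct summand of $M$ as a module over $R^G = \Gamma(X,\sO_X)$ (the projector being $\tfrac{1}{|G|}\sum_{g\in G} g$, which is $R^G$-linear), and that a direct summand of a flat module is flat. Since $M$ is flat over $U$ by hypothesis and $R^G \to R$ carries $U$-flatness appropriately, $M^G$ is flat over $U$. Flatness is local on $X$, so this suffices.

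For part \eqref{itm:stack_coarse_pushforward:S_r}, the strategy is again to work étale-locally and use the Serre-type characterization of relative $S_r$ in terms of depth of fibers (as in the relative $S_d$ notion recalled in the Notation section, via \cite[Lemma 4.2]{Patakfalvi_Schwede_Depth_of_F_singularities}), or alternatively via local cohomology. The point is that passing to $G$-invariants, being the image of an $R^G$-linear idempotent on $M$, makes $\pi_*\sF$ a direct summand of $\pi_* (\text{something})$; more directly, for a geometric point $u$ of $U$ the fiber $(\pi_*\sF)_u = \pi_{u,*}(\sF_u)$ by the base-change compatibility of $\pi_*$ noted above, so it is enough to treat the absolute case: if $N$ is a coherent $G$-equivariant module on $W$ that is $S_r$ with pure-dimensional support, then $N^G$ is $S_r$ with pure-dimensional support on $W/G$. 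Purity of the support is clear since $\Supp(N^G)$ is the image of $\Supp(N)$ under the finite surjection $W \to W/G$. For the depth condition, I would observe that $N^G$ is a direct summand of $\pi_* N = N$ viewed as an $R^G$-module, and that $N$ as an $R^G$-module has depth at each prime at least that of $N$ over $R$ at the primes above it (finite morphisms preserve depth when computed over the base, using that a system of parameters downstairs pulls back to one upstairs); since a direct summand of a module with $\depth \ge \min(r, \dim)$ at every point again has this property, we conclude. Then globalize: the $S_r$ condition on fibers is étale-local on $X$, so $\pi_*\sF$ is relatively $S_r$ over $U$.

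The main obstacle I expect is the bookkeeping in part \eqref{itm:stack_coarse_pushforward:S_r}: one must be careful that "$S_r$" is the relative notion (a fiberwise depth condition), so the argument genuinely needs the base-change statement $(\pi_*\sF)_u \cong \pi_{u,*}(\sF_u)$ — which itself relies on $\pi_*$ being exact in characteristic zero — before the absolute invariant-theory depth argument can be applied fiber by fiber. Establishing that base-change compatibility cleanly (and checking it is compatible with the étale-local presentations $[W/G]$) is the one genuinely technical point; everything else is the standard "invariants are a direct summand" trick plus the fact that finite surjections preserve depth computed over the base.
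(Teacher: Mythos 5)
Your proposal is correct and follows essentially the same route as the paper: reduce via \cite[Lemma 2.2.3]{Abramovich_Vistoli_Compactifying_the_space_of_stable_maps} to a local quotient presentation $[V/G]$, use the characteristic-zero averaging (trace) idempotent to exhibit $\pi_*\sF$ as a direct summand of the pushforward of a sheaf along the finite map $V \to X$, and conclude from the fact that finite pushforward preserves flatness and relative $S_r$ for pure-dimensional support (the paper cites \cite[Proposition 5.4]{Kollar_Mori_Birational_geometry_of_algebraic_varieties} where you re-derive the depth statement by hand). The only cosmetic difference is that the paper splits $\sF$ off $\rho_*\rho^*\sO_V\otimes\sF$ on the stack before pushing forward, whereas you take $G$-invariants directly on the atlas.
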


\begin{proof}
We prove the two statements at once. By \cite[Lemma 2.2.3]{Abramovich_Vistoli_Compactifying_the_space_of_stable_maps}, we may assume that $\sX$ is a quotient stack $[V / G]$ for some finite group $G$, and $X$ is the scheme theoretic quotient $V/G$. Let $\rho \colon V \to [V /G]$ be the natural map. Then, by the characteristic zero assumption, 
the normalization of the  trace map $\rho_*  \sO_V \to \sO_{\sX}$ \cite[5.6, 5.7]{Kollar_Mori_Birational_geometry_of_algebraic_varieties} splits  the natural inclusion $\sO_{\sX} \to \rho_*  \sO_V$ (recall that $\rho$ if flat because it is \'etale, so $\rho_* \sO_V$ is locally free and then the trace map does make sense).
Since $\rho \colon V \to [V/G]$ is \'etale, $\rho^* \sF$ is flat (resp. flat and relatively $S_r$) over $U$. Further, since $\pi \circ \rho$ is finite, it is also affine and therefore $\pi_* \rho_* \rho^* \sF$ is flat over $U$ (resp. by the base-change property of pushforward via a finite morphism and by \cite[Proposition 5.4]{Kollar_Mori_Birational_geometry_of_algebraic_varieties}, $ \pi_* \rho_* \rho^* \sF$ is flat and relatively $S_r$ over $U$) as well. Furthermore by the above mentioned trace splitting, $\rho_* \rho^* \sF$ contains $\sF$ as a direct summand. Hence, $\pi_* \rho_* \rho^* \sF$ contains $\pi_* \sF$ as a direct summand and then consequently the latter is flat (resp. flat and relatively $S_r$)  as well. 
\end{proof}

\begin{lem}
\label{lem:tower_of_stable_stacks_relative_canonical_relative_S_2}
Given a fibration of stable stacks as in \eqref{eq:tower_of_stable_stacks}, $\sO_{\sX_i}$ and $\omega_{\sX_i/\sX_{i-1}}^{m}$ are flat and relatively $S_2$ over $\sX_j$ for every $0 \leq j < i \leq 2$, $m \in \bZ$.
\end{lem}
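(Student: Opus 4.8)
The plan is to mirror the proof of the scheme-theoretic analogue, Lemma~\ref{lem:tower_of_stable_schemes_relative_canonical_relative_S_2}, taking advantage of the fact that in the stacky setting $\omega_{\sX_i/\sX_{i-1}}$ is a genuine line bundle (it is part of the definition of a family of stable stacks), so none of the reflexive-hull bookkeeping of the scheme case is needed.

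First I would handle $\sO_{\sX_i}$. The case $j=i-1$ is immediate: by Definition~\ref{defn:stable_stack} a family of stable stacks $\tf_i\colon\sX_i\to\sX_{i-1}$ is flat, and its fibres are stable stacks, hence slc, hence $S_2$; thus $\sO_{\sX_i}$ is flat and relatively $S_2$ over $\sX_{i-1}$. The only remaining case is $i=2$, $j=0$, and there I would compose via Lemma~\ref{lem:relatively_S_d}: applied to $\tg\colon\sX_2\to\sX_1$, $\tf_1\colon\sX_1\to\sX_0$ with the (flat, relatively $S_2$) structure sheaves $\sF=\sO_{\sX_2}$ and $\sG=\sO_{\sX_1}$, it gives that $\sO_{\sX_2}\cong\sO_{\sX_2}\otimes\tg^*\sO_{\sX_1}$ is flat and relatively $S_2$ over $\sX_0$.

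Next, for $\omega_{\sX_i/\sX_{i-1}}^{m}$ I would reduce to the structure-sheaf case just settled. Since $\tf_i$ is a family of stable stacks, $\omega_{\sX_i/\sX_{i-1}}$ is a uniformizing line bundle on $\sX_i$, and hence $\omega_{\sX_i/\sX_{i-1}}^{m}$ is a line bundle for every $m\in\bZ$. For any line bundle $\sL$ on $\sX_i$ one has: $\sL$ is flat over $\sX_j$ if and only if $\sO_{\sX_i}$ is, and for every geometric point $b$ of $\sX_j$ the restriction $\sL|_{(\sX_i)_b}$ is a line bundle on $(\sX_i)_b$, so it is $S_2$ as a sheaf exactly when $(\sX_i)_b$ is $S_2$ (all of this being checked on \'etale covers by schemes, as in the definition of relative $S_d$). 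Since $\sO_{\sX_i}$ has just been shown to be flat and relatively $S_2$ over $\sX_j$ for all $0\le j<i\le 2$, the same follows for $\omega_{\sX_i/\sX_{i-1}}^{m}$.

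I do not expect a genuine obstacle here: the statement is essentially a formal consequence of the implication ``family of stable stacks $\Rightarrow$ flat with $S_2$ fibres'', of Lemma~\ref{lem:relatively_S_d} applied to the composite $\sX_2\to\sX_1\to\sX_0$, and of the triviality of line bundles in the \'etale topology. The only points that need a little care are the conventions --- that ``relatively $S_d$'' for DM-stacks and for sheaves on them is tested on \'etale covers by schemes --- and the observation that twisting by an invertible sheaf affects neither flatness nor the fibrewise depth conditions; in particular, unlike in the scheme case, one never needs to invoke Koll\'ar's condition to identify the fibres of $\omega_{\sX_i/\sX_{i-1}}^{m}$, since this sheaf is already invertible.
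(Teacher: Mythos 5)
Your proposal is correct and follows essentially the same route as the paper: the paper also deduces the case of $\sO_{\sX_i}$ from Lemma~\ref{lem:relatively_S_d} (together with the flatness and slc, hence $S_2$, fibres built into Definition~\ref{defn:stable_stack}) and then disposes of $\omega_{\sX_i/\sX_{i-1}}^{m}$ in one line by observing that these sheaves are locally free. Your write-up merely makes explicit the details the paper leaves implicit, in particular why twisting by an invertible sheaf preserves flatness and the fibrewise $S_2$ condition.
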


\begin{proof}
The statement is immediate for $\sO_{\sX_i}$ using Lemma \ref{lem:relatively_S_d}, and then also for the other sheaves, since $\omega_{\sX_i/\sX_{i-1}}^{m}$ are locally free. 
\end{proof}

\begin{lem}
\label{lem:stacky_Hassett_Kovacs}
Let $f \colon \sX \to \sY$ be a flat morphism of DM-stacks of finite type over $k$, and $\sF$ and $\sG$ two coherent sheaves on $\sX$ both flat and relatively $S_2$ over $\sY$. Further assume that there is an open substack $\iota \colon U \to \sX$, such that $\sF|_U \cong \sG|_U$ and the relative codimension of $\sX \setminus U$ is at least two. Then $\sF \cong \sG$. 
\end{lem}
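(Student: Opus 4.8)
The plan is to establish the stronger claim that the natural map $\sF \to \iota_* \iota^* \sF$ is an isomorphism, and symmetrically that $\sG \to \iota_* \iota^* \sG$ is one; granting this, one obtains
\[
\sF \;\cong\; \iota_* \iota^* \sF \;\cong\; \iota_* \iota^* \sG \;\cong\; \sG,
\]
where the two outer isomorphisms are the natural maps and the middle one is $\iota_*$ applied to the given isomorphism $\sF|_U \cong \sG|_U$. Working with the \emph{natural} maps throughout is what lets us avoid any descent-of-an-abstract-isomorphism issue when passing between a stack and its atlas.

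First I would reduce to the case where $\sX$ and $\sY$ are schemes. Choose an \'etale atlas $W \to \sY$ by a scheme, then an \'etale atlas $V \to \sX \times_{\sY} W$ by a scheme, and let $p \colon V \to W$ be the (flat) composite. The pullback of $\sF$ to $V$ is flat and relatively $S_2$ over $W$, since pullback along the \'etale morphism $V \to \sX$ preserves flatness and the fiberwise $S_2$ property (e.g.\ by \cite[Lemma 4.2]{Patakfalvi_Schwede_Depth_of_F_singularities}), and the preimage in $V$ of $\sX \setminus U$ is closed of relative codimension at least two over $W$, because \'etale morphisms preserve codimension. Moreover $\iota$ is a quasi-compact open immersion, so the formation of $\iota_* \iota^* \sF$ commutes with the flat base change $V \to \sX$, and the natural map $\sF \to \iota_* \iota^* \sF$ pulls back on $V$ to the corresponding natural map there. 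Since being an isomorphism can be checked after pulling back along the \'etale cover $V \to \sX$, it suffices to prove the claim when $\sX$ and $\sY$ are schemes. In that case the hypotheses are precisely those of \cite[Proposition 3.6]{Hassett_Kovacs_Reflexive_pull_backs}, which gives that $\sF \to \iota_* \iota^* \sF$ is an isomorphism; the same applies to $\sG$, and the display above finishes the proof.

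The only genuinely delicate point is the reduction to schemes — checking that "flat and relatively $S_2$" and "relative codimension at least two" pass to an \'etale scheme chart, and that $\iota_*\iota^*$ commutes with such a chart — after which the statement is the scheme-theoretic reflexivity result of Hassett and Kov\'acs. Everything else is formal manipulation of the unit of the adjunction $(\iota^*, \iota_*)$.
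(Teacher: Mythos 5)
Your proposal is correct and follows essentially the same route as the paper: both arguments reduce to showing that the natural map $\sF \to \iota_*(\sF|_U)$ is an isomorphism, pass to \'etale charts of $\sY$ and then of $\sX$ (where flatness, relative $S_2$, and relative codimension are preserved and $\iota_*\iota^*$ commutes with the flat base change), and then invoke the Hassett--Kov\'acs extension result for schemes. The only difference is cosmetic: the paper phrases the final step as the vanishing of $H^i_Z$ for $i=0,1$ via \cite[Proposition 3.3]{Hassett_Kovacs_Reflexive_pull_backs}, whereas you cite the packaged reflexive-extension statement directly.
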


\begin{proof}
It is enough to show that the natural homomorphisms  $\sF \to \iota_* (\sF|_U) $ and $\sG \to \iota_* (\sG|_U)$ are isomorphisms. Further, since the role of $\sF$ and $\sG$ are symmetric, it is enough to show only the first one. For this, by the long exact sequence of local cohomology it is enough to show that for every \'etale chart $V$, $H^i_{Z} (V,\sF)=0$ for $i=0,1$, where $Z := (V \setminus U \times_{\sX} V)_{\red}$. In fact, by the sheaf property it is enough to exhibit one \'etale cover of every etale chart for which the above vanishing holds. In particular then we may assume that there is a commutative diagram as above, where $W$ is an \'etale chart of $\sY$.
\begin{equation*}
\xymatrix{
\sX \ar[d]^f & \ar[l]^{\rho} V \ar[d]^g \\
\sY & \ar[l]^{\pi} W
}
\end{equation*}
However, then $g$ is flat and  $\rho^* \sF$ is flat and relatively $S_2$ over $W$. Hence, using that $\codim_V Z \geq 2$, by \cite[Proposition 3.3]{Hassett_Kovacs_Reflexive_pull_backs}, we obtain that the above vanishing holds indeed.
\end{proof}

\begin{lem}
\label{lem:admissible_open}
Let $(\sX \to \sY \to B)$ be a fibration of stable stacks as in \eqref{eq:tower_of_stable_stacks} over the spectrum of a local Artinian $k$-algebra $A$. Let $P$ be the closed point of $B=\Spec A$.  If the restriction $(\sX_P \to \sY_P \to P)$ over $P$ is admissible, then $(\sX \to \sY \to B)$  is admissible as well.
\end{lem}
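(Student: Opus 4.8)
The plan is to fix a sufficiently divisible integer $m$ and show that the sheaf $\sG:=\pi_*\tg_*\omega_{\sX/\sY}^m$ is locally free on the coarse space $Y$ of $\sY$; since $m$ ranges over all sufficiently divisible integers, this is exactly admissibility of $(\sX\to\sY\to B)$. The structural point I would exploit is that $B=\Spec A$ with $A$ Artinian local, with maximal ideal $\maxid$ nilpotent: then $|B|$ is a single point, so $Y$ and its closed fibre $Y_P$ have the same underlying space, and a coherent sheaf on a noetherian $B$-scheme is locally free as soon as it is $A$-flat and its restriction to $Y_P$ is locally free. (This implication is a Nakayama argument: at a point $y\in Y$ lift a basis of $\sG\otimes_A k$ to a map $\sO_{Y,y}^{\oplus r}\to\sG_y$; it is surjective since $\maxid$ is nilpotent, and its kernel vanishes, using $\Tor_1^A(\sG_y,k)=0$ from $A$-flatness together with the fact that a surjection of free modules of equal finite rank over the local ring $\sO_{Y_P,y}$ is an isomorphism.) Thus the task reduces to: (i) $\sG$ is $A$-flat; and (ii) $\sG\otimes_A k$ is identified with $\pi_{P*}\tg_{P*}\omega_{\sX_P/\sY_P}^m$, which is locally free on $Y_P$ by the admissibility hypothesis on $(\sX_P\to\sY_P\to P)$.

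To obtain (i) and (ii) I would process the two push-forwards $\tg_*$ and $\pi_*$ in turn. First, since $\tg\colon\sX\to\sY$ is a family of stable stacks, $\omega_{\sX/\sY}$ is a uniformizing line bundle, so $\omega_{\sX/\sY}^m$ is a line bundle on $\sX$, flat over $\sY$ by Lemma \ref{lem:tower_of_stable_stacks_relative_canonical_relative_S_2}; the fibres $\sX_y$ are stable stacks, hence $\omega_{\sX_y}$ is ample, so (taking the divisibility of $m$ to be at least $2$, which we may) Koll\'ar-type vanishing \cite{Kollar_Singularities_of_the_minimal_model_program} gives $H^{>0}(\sX_y,\omega_{\sX_y}^m)=0$ for every $y$. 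By cohomology and base change \cite[III.12]{Hartshorne_Algebraic_geometry} it follows that $R^{>0}\tg_*\omega_{\sX/\sY}^m=0$, that $\tg_*\omega_{\sX/\sY}^m$ is locally free on $\sY$, and that its formation commutes with arbitrary base change; in particular its restriction to $\sY_P$ is $\tg_{P*}\omega_{\sX_P/\sY_P}^m$, and being locally free on $\sY$, which is flat over $B$, it is flat over $B$. Now apply $\pi_*$: by Lemma \ref{lem:stack_coarse_pushforward} the sheaf $\sG=\pi_*\tg_*\omega_{\sX/\sY}^m$ is flat over $B$, which is (i); and since formation of coarse spaces commutes with base change \cite[Lemma 2.3.3]{Abramovich_Vistoli_Compactifying_the_space_of_stable_maps} and coarse push-forward is exact and base-change compatible in characteristic zero (as in the proof of Lemma \ref{lem:stack_coarse_pushforward}), one gets $\sG\otimes_A k\cong\pi_{P*}\tg_{P*}\omega_{\sX_P/\sY_P}^m$, which is (ii). The Nakayama step above then completes the argument.

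The step I expect to be the main obstacle is the base-change bookkeeping across the two push-forwards: making sure $R^{>0}\tg_*\omega_{\sX/\sY}^m$ genuinely vanishes for \emph{all} sufficiently divisible $m$ (this is why one bounds the divisibility below by $2$, so Koll\'ar vanishing applies on every fibre), and that coarse push-forward commutes with restriction to the closed point $P$. The latter is precisely the kind of assertion underlying Lemma \ref{lem:stack_coarse_pushforward}, established there via the trace splitting available in characteristic zero. Everything else — inheritance of flatness along flat morphisms, and the $\maxid$-nilpotent Nakayama argument on $Y$ — is routine.
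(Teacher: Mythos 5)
Your argument is correct and follows essentially the same route as the paper's proof: cohomology-and-base-change for $\tg_*$ (the paper cites the stack version \cite[Theorem A]{Hall_Cohomology_and_base_change_for_algebraic_stacks} rather than \cite[III.12]{Hartshorne_Algebraic_geometry}, which is stated for schemes), base-change compatibility and flatness of the coarse push-forward via Lemma \ref{lem:stack_coarse_pushforward} and \cite{Abramovich_Vistoli_Compactifying_the_space_of_stable_maps}, and then the observation that an $A$-flat coherent sheaf restricting to a locally free sheaf on the closed fibre is locally free. Your explicit Nakayama argument is exactly the content of \cite[Exercise 7.1]{Hartshorne_Deformation_theory}, which is what the paper invokes at that step.
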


\begin{proof}
We use the notations of \eqref{eq:tower_of_stable_stacks_coarse_maps} during the proof. First, we claim that for $m \gg 0$, the formulation of $\pi_* \tg_* \omega_{\sX/\sY}^m$ is compatible with base change, that is, for every $B' \to B$, 
\begin{equation}
\label{eq:admissible_open:compatibility}
\left( \pi_* \tg_* \omega_{\sX/\sY}^m \right)_{B'} \cong (\pi_{B'})_* (\tg_{B'})_* \left(\omega_{\sX_{B'}/\sY_{B'}} \right)^m .
\end{equation}
Since $\omega_{\sX/\sY}$ is a $\tg$-ample line bundle, for all $m \gg 0$, its  higher cohomologies on the fibers of $\tg$ vanish. In particular, then by cohomology and base change \cite[Theorem A]{Hall_Cohomology_and_base_change_for_algebraic_stacks}
\begin{equation*}
\left( \tg_* \omega_{\sX/\sY}^m \right)_{B'} \cong (\tg_{B'})_* \left(\omega_{\sX_{B'}/\sY_{B'}} \right)^m ,
\end{equation*}
and $\tg_* \omega_{\sX/\sY}^m$ is locally free. 
Furthermore by \cite[Lemma 2.2.3]{Abramovich_Vistoli_Compactifying_the_space_of_stable_maps}, $\pi_*$ commutes with base change for any sheaf. This concludes the proof of \eqref{eq:admissible_open:compatibility}. Fix for the remainder of the proof an $m$ for which \eqref{eq:admissible_open:compatibility} holds and is divisible enough.

Notice now that by \cite[Lemma 2.3.6]{Abramovich_Hassett_Stable_varieties_with_a_twist}, $\pi_{P}  \colon \sY_{P} \to Y_{P}$ is the coarse moduli map of $\sY_{P}$. Therefore, by \eqref{eq:admissible_open:compatibility} and by the assumption that $(\sX_P \to \sY_P \to P)$ is admissible, $\pi_* \tg_* \omega_{\sX/\sY}^m \big|_{Y_P}$ is a locally free sheaf. Furthermore, since $\tg_* \omega_{\sX/\sY}^m$ is locally free, it is flat over $B$. Hence by Lemma \ref{lem:stack_coarse_pushforward}.\ref{itm:stack_coarse_pushforward:flat}, $\pi_* \tg_* \omega_{\sX/\sY}^m$ is flat over $B$. Therefore, $ \pi_* \tg_* \omega_{\sX/\sY}^m $ is a flat deformation of a locally free sheaf, which is locally free by \cite[Exercise 7.1]{Hartshorne_Deformation_theory}. This finishes our proof.
\end{proof}

\begin{lem}
\label{lem:proj_is_coarse_moduli_space}
Let $f \colon \sX \to \sY$ be a proper morphism of separated DM-stacks and $\sL$ an $f$-ample line bundle. Define $\sZ : = \Proj_{\sY} \left( \bigoplus_{n \geq 0} f_* (\sL^n) \right)$ and let $\rho\colon \sX \to \sZ $ be the natural morphism. Then $\rho_* \sO_{\sX} \cong \sO_{\sZ}$. Furthermore, if $f$ was flat, so is $\sZ$ over $\sY$.
\end{lem}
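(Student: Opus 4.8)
The plan is to reduce both assertions to the case in which $\sY$ is an affine scheme and then read everything off the graded section algebra. First I would note that both statements are étale‑local on $\sY$: if $V\to\sY$ is étale with $V$ a scheme, then flat base change along $V\to\sY$ (legitimate since $f$, being proper, is quasi‑compact and quasi‑separated) gives $\bigl(\bigoplus_{n\ge 0}f_*\sL^n\bigr)|_V\cong\bigoplus_{n\ge 0}(f_V)_*\sL_V^{\,n}$, whence $\sZ\times_\sY V\cong\Proj_V\bigl(\bigoplus_{n\ge 0}(f_V)_*\sL_V^{\,n}\bigr)$, the morphism $\rho$ pulls back to the corresponding $\rho_V$, and $(\rho_*\sO_\sX)|_{\sZ\times_\sY V}\cong(\rho_V)_*\sO_{\sX_V}$. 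Since $\{\sZ\times_\sY V\to\sZ\}$ is an étale cover of $\sZ$ by a scheme, and flatness of $\sZ\to\sY$ likewise descends along $V\to\sY$, I may replace $\sY$ by $V$ and then by an affine open, so that $\sY=\Spec R$ with $R$ Noetherian. Put $A:=\bigoplus_{n\ge 0}\Gamma(\sX,\sL^n)$, a graded $R$‑algebra, so that $\sZ=\Proj A$ and $\rho$ is the natural morphism $\sX\to\Proj A$ (it is everywhere defined because a power of $\sL$ is $f$‑globally generated).

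For the first assertion I would cover $\Proj A$ by the affine charts $D_+(s)=\Spec A_{(s)}$ with $s\in A_d$ homogeneous of degree $d>0$. By construction of $\rho$ one has $\rho^{-1}(D_+(s))=\sX_s$, the quasi‑compact quasi‑separated open substack where $s\in\Gamma(\sX,\sL^d)$ is non‑vanishing (on which $s$ trivializes $\sL^d$), and the comparison morphism $\sO_\sZ|_{D_+(s)}\to\rho_*\sO_\sX|_{D_+(s)}$ is the one attached to the tautological ring homomorphism $A_{(s)}\to\Gamma(\sX_s,\sO_\sX)$, $a/s^k\mapsto a/s^k$. Now the standard identity
\[
\Gamma(\sX_s,\sO_\sX)\;=\;\varinjlim_{k}\Bigl(\Gamma(\sX,\sL^{kd})\xrightarrow{\ \cdot s\ }\Gamma(\sX,\sL^{(k+1)d})\Bigr),
\]
valid for a line bundle and a global section on any quasi‑compact quasi‑separated DM‑stack — the proof for schemes (see \cite{stacks-project}) applies verbatim after choosing an étale presentation and using that filtered colimits commute with the descent equalizer — identifies its right‑hand side with $\varinjlim_k A_{kd}=A_{(s)}$, compatibly with the tautological map. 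Hence $\sO_\sZ\to\rho_*\sO_\sX$ is an isomorphism over every chart $D_+(s)$, and therefore an isomorphism.

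For the second assertion, assume $f$ is flat. By relative Serre vanishing for the proper morphism $f$ of Noetherian DM‑stacks together with the $f$‑ample $\sL$ (in characteristic zero one may also deduce this by passing to the coarse moduli space), there is an $n_0$ with $R^if_*\sL^m=0$ for all $i>0$ and $m\ge n_0$; since $f$ is flat and $\sL^m$ is flat over $\sY$, cohomology and base change \cite[Theorem A]{Hall_Cohomology_and_base_change_for_algebraic_stacks} then shows that $A_m=f_*\sL^m$ is a locally free $R$‑module for every $m\ge n_0$. Writing again $\sZ=\Proj A$ as the union of the charts $\Spec A_{(s)}$ with $s\in A_d$ homogeneous of degree $d\ge 1$, the indices $k$ with $kd\ge n_0$ are cofinal in the directed system computing $A_{(s)}$, so
\[
A_{(s)}\;=\;\varinjlim_{\,kd\ge n_0}\Bigl(A_{kd}\xrightarrow{\ \cdot s\ }A_{(k+1)d}\Bigr)
\]
is a filtered colimit of locally free — in particular flat — $R$‑modules, hence flat over $R$. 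Thus every affine chart of $\sZ$ is flat over $\Spec R$, i.e.\ $\sZ$ is flat over $\sY$.

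The steps I expect to require the most care are the two genuinely stacky inputs: that the colimit description of the affine charts of $\Proj$ of the section algebra holds over a DM‑stack and is compatible with the structure map $\rho$, and that relative Serre vanishing and cohomology‑and‑base‑change are available for proper morphisms of Noetherian DM‑stacks. Everything else is bookkeeping around the étale‑local reduction on $\sY$.
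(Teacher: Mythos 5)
Your proof is correct, but it takes a genuinely different route from the paper's. You argue directly on the standard affine charts $D_+(s)$ of $\Proj$, identifying $\Gamma(\rho^{-1}(D_+(s)),\sO_{\sX})=\Gamma(\sX_s,\sO_{\sX})$ with $A_{(s)}$ via the quasi-compact quasi-separated colimit formula (transported to DM-stacks through an \'etale presentation), and you obtain flatness from relative Serre vanishing, cohomology and base change, and the observation that $A_{(s)}$ is a filtered colimit of the locally free graded pieces $A_{kd}$ with $kd\geq n_0$. The paper instead shows that $\sZ$ is nothing but the coarse moduli space $Z$ of $\sX$: a power $\sL^{m}$ descends to a $Y$-ample line bundle $\sK$ on $Z$, and since $\Proj$ is unchanged by passing to a Veronese subalgebra, the projection formula together with $\pi_*\sO_{\sX}\cong\sO_Z$ gives $\sZ\cong\Proj_Y\bigl(\bigoplus_n g_*\pi_*\sL^{nm}\bigr)\cong\Proj_Y\bigl(\bigoplus_n g_*\sK^{n}\bigr)\cong Z$; both assertions then fall out of properties of coarse moduli maps (the flatness via Lemma \ref{lem:stack_coarse_pushforward}). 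The paper's argument is shorter and records the additional fact that $\sZ$ is the coarse space of $\sX$, which is reused in Lemma \ref{lem:coarse_tower_is_a_tower_of_stable_schemes} and Remark \ref{rem:coarse_tower_with_proj}; yours is more self-contained (it avoids descent of line bundles to the coarse space entirely) and makes the flatness claim, which the paper leaves essentially implicit, completely explicit. The two stacky inputs you flag are indeed where the care is needed, but both are available here: $\sX$ is noetherian and separated, so the equalizer-plus-filtered-colimit argument works, and the Serre-vanishing-plus-base-change step is exactly the one the paper itself runs in Lemma \ref{lem:admissible_open}.
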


\begin{proof}
Since the question is \'etale local on $\sY$, we may assume that $Y:=\sY$ is a scheme. Let then $\pi \colon \sX \to Z$ be the  coarse moduli map of $\sX$ and $g \colon Z \to Y$ the natural induced morphism. It is enough to show that $\sZ \cong Z$, compatibly with $\rho$ and $\pi$. 

Since $\sX$ is a DM-stack, there is an integer $m>0$, and a line bundle $\sK$ on $Z$, such that $\pi^* \sK \cong \sL^m$. Then, $\pi^* \sK^n \cong \sL^{n \cdot m}$ for every $m$ and $\sK$ is also relatively ample over $Y$. Therefore, the following computation concludes our proof.
\begin{equation*}
 Z \cong \Proj_Y \left(\bigoplus_{n \geq 0} g_* (\sK^n) \right) 
\cong 
\underbrace{\Proj_Y \left(\bigoplus_{n \geq 0} g_* \pi_* (\sL^{n \cdot m}) \right) }_{ \parbox{130pt}{ \tiny projection formula and the fact that since $\pi$ is a coarse moduli map, $\pi_* \sO_{\sX} \cong \sO_Z$}}
 \cong \Proj_Y \left(\bigoplus_{n \geq 0} f_* (\sL^{n \cdot m}) \right) 
\cong \sZ
\end{equation*}
\end{proof}

\subsection{Equivalences of deformation functors}

Here we show the promised fibration version of Theorem \ref{thm:abramovich_hassett}

\begin{lem}
\label{lem:pi_is_a_coarse_moduli map}
If $(\sX \to \sY \to B)$ is the index-one cover of a fibration $(X \to Y \to B)$ of stable varieties as in Definition \ref{defn:index_one_tower}, then the natural morphisms $\pi \colon \sY \to Y$ and $\gamma \colon \sX \to X$ are coarse moduli morphisms.
\end{lem}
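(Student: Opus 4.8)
\emph{Plan.} Both statements will follow from the single fact that the relative index-one covering stack of a family of stable varieties recovers that family as its coarse moduli space, which is built into the Abramovich--Hassett correspondence of Theorem~\ref{thm:abramovich_hassett} (see \cite[\S5.3]{Abramovich_Hassett_Stable_varieties_with_a_twist}). For $\pi$ this applies directly. For $\gamma$ it does not, because, as emphasized in Section~\ref{sec:deformation}, the stack $\sX$ of Definition~\ref{defn:index_one_tower} is \emph{not} itself an index-one covering stack; so I would factor $\gamma$ through the genuine relative index-one cover
\[
\sX^{\circ}:=\left[\Spec_{X}\left(\bigoplus_{m\in\bZ}\omega_{X/Y}^{[m]}\right)/\bG_m\right]
\]
of $g=f_2\colon X\to Y$, for which the fact does apply, and then account for the remaining base change $-\times_Y\sY$.

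\emph{The covering stacks of $Y/B$ and of $X/Y$.} Since $f_1\colon Y\to B$ is a family of stable varieties and $\sY$ is, by Definition~\ref{defn:index_one_stack}, exactly its index-one covering stack, Theorem~\ref{thm:abramovich_hassett} together with \cite[Lemma~2.3.6]{Abramovich_Hassett_Stable_varieties_with_a_twist} gives that $\sY$ is a family of stable stacks and that $\pi\colon\sY\to Y$ is its coarse moduli morphism. The identical argument applied to the family of stable varieties $g\colon X\to Y$ shows that $\gamma^{\circ}\colon\sX^{\circ}\to X$ is a coarse moduli morphism. (If a self-contained check is preferred: choosing $N>0$ with $\omega_{Y/B}^{[N]}$ a line bundle, the $N$-th Veronese subalgebra $\bigoplus_{m}\omega_{Y/B}^{[Nm]}\subseteq\bigoplus_{m}\omega_{Y/B}^{[m]}$ is the Rees algebra of that line bundle, so its relative $\Spec$ is the associated $\bG_m$-torsor over $Y$, whose stack quotient is $Y$; the inclusion induces $\pi$, and one reads off that $\pi$ is proper, quasi-finite, satisfies $\pi_{*}\sO_{\sY}\cong\sO_{Y}$ degree by degree, and is bijective on geometric points, hence is a coarse moduli morphism by Keel--Mori; cf. \cite[Lemma~2.2.3]{Abramovich_Vistoli_Compactifying_the_space_of_stable_maps}.)

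\emph{The morphism $\gamma$.} By Definition~\ref{defn:index_one_tower}, $\sX=\sX^{\circ}\times_{Y}\sY$ and $\gamma$ is the composite $\sX\to\sX^{\circ}\xrightarrow{\gamma^{\circ}}X$. I would verify that $X$, with $\gamma$, satisfies the characterization of a coarse moduli space and then invoke its uniqueness: $\gamma$ is proper and quasi-finite, being a composite of such morphisms (the relevant properties of $\gamma^{\circ}$ and of $\pi$ are stable under the base change defining $\sX\to\sX^{\circ}$); $\gamma$ is bijective on geometric points, since a geometric point of $X$ has a unique lift to $\sX^{\circ}$ and, over its image in $Y$, a unique lift to $\sY$, hence a unique lift to the fibre product; and $\gamma_{*}\sO_{\sX}\cong\sO_{X}$. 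For the last point, relative $\Spec$ commutes with base change, so
\[
\sX=\left[\Spec_{X}\left(\bigoplus_{i,j\in\bZ}\omega_{X/Y}^{[i]}\otimes_{\sO_X} g^{*}\omega_{Y/B}^{[j]}\right)/(\bG_m\times\bG_m)\right],
\]
with $\bG_m\times\bG_m$ acting through the two gradings; pushing to $X$ and taking invariants picks out the bidegree-$(0,0)$ summand, which is $\sO_{X}$. Thus $X$ is a coarse moduli space of $\sX$ with structure morphism $\gamma$, and by uniqueness of coarse moduli spaces $\gamma$ is the coarse moduli morphism.

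\emph{Main obstacle.} The delicate point is $\gamma_{*}\sO_{\sX}\cong\sO_{X}$: since $\pi$ is not flat one cannot obtain it simply by pulling back the identity $\pi_{*}\sO_{\sY}\cong\sO_{Y}$ along $\sX^{\circ}\to Y$, and one must instead use the explicit bigraded presentation of $\sX$ above (or pass to \'etale charts on $Y$ and argue with finite quotient presentations, as in the proof of Lemma~\ref{lem:stack_coarse_pushforward}). Everything else is formal.
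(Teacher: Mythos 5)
Your proposal is correct, and its skeleton coincides with the paper's: $\pi$ is handled by Theorem \ref{thm:abramovich_hassett}; $\gamma$ is factored through the relative index-one cover $\sX^{\circ}=\bigl[\Spec_X\bigl(\bigoplus_m\omega_{X/Y}^{[m]}\bigr)/\bG_m\bigr]$ (the paper's $\sX'$), whose map to $X$ is a coarse moduli morphism by the same theorem applied to $g\colon X\to Y$; quasi-finiteness and properness of $\gamma$ are obtained by composition; and the whole thing reduces to $\gamma_*\sO_{\sX}\cong\sO_X$. The one genuine divergence is in that last computation. You do it via the bigraded presentation $\sX\cong\bigl[\Spec_X\bigl(\bigoplus_{i,j}\omega_{X/Y}^{[i]}\otimes g^*\omega_{Y/B}^{[j]}\bigr)/\bG_m^2\bigr]$ and the observation that $\bG_m^2$-invariants pick out the bidegree-$(0,0)$ part; this is sound (it is exactly the presentation the paper itself establishes later, in Step 1 of Proposition \ref{prop:two_index_ones}), and it has the merit of being explicit. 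The paper instead writes $\gamma=\eta\circ\zeta$ with $\zeta\colon\sX\to\sX'$ the base change of $\pi$ along $\ug\colon\sX'\to Y$, and computes $\zeta_*\sO_{\sX}\cong\zeta_*\tg^*\sO_{\sY}\cong\ug^*\pi_*\sO_{\sY}\cong\ug^*\sO_Y\cong\sO_{\sX'}$ by flat base change. Here you should correct a misconception in your ``main obstacle'' remark: flat base change in the Cartesian square with horizontal arrows $\zeta,\pi$ and vertical arrows $\tg,\ug$ requires flatness of the morphism one pulls back \emph{along}, namely $\ug$, not of $\pi$; and $\ug$ \emph{is} flat, since $\sX'\to Y$ is a family of stable stacks by Theorem \ref{thm:abramovich_hassett}. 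So the route you declared unavailable is in fact the one the paper takes, and it is slightly shorter than the invariants computation. Either argument completes the proof.
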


\begin{proof}
For $\pi$ this follows from Theorem \ref{thm:abramovich_hassett}. Hence we restrict to $\gamma$ from now. Since $\gamma$ is proper, we have to show that it is quasi-finite and $\gamma_* \sO_{\sX} \cong \sO_{X}$.  First, let us introduce some notation in the following commutative diagram (here $\sX = X \times_Y \sY$ as defined in Definition \ref{defn:index_one_tower}).
\begin{equation*}
\xymatrix{
X \ar[d]^{g} & \sX':=\left[ \Spec_{X} \left( \bigoplus_{m \in \bZ} \omega_{X/Y}^{[m]} \right) / \bG_m \right] \ar[dl]^{\ug} \ar[l]^-{\eta} & \sX \ar[l]_-{\zeta} \ar[dl]^{\tg} \ar@/_2pc/[ll]_{\gamma} \\
Y & \ar[l]^{\pi} \sY
}
\end{equation*}
First, since $\pi$ is a coarse moduli map, it is quasi-finite. Second, by Theorem \ref{thm:abramovich_hassett}, $\eta$ is a coarse moduli map of a DM-stack, hence it is also quasi-finite. So, it follows that $\gamma$ is quasi-finite. For the other condition, notice that $\eta_* \sO_{\sX'} \cong \sO_{X}$, since $\eta$ is a coarse moduli morphism. Furthermore, by flat base-change \cite[Corollary 4.2.2]{Brochard_Finiteness_theorems_for_the_Picard_objects_of_an_algebraic_stack} ($\ug$ is flat, since it is the index-one-cover of $g$ which is flat by Theorem \ref{thm:abramovich_hassett}),
\begin{equation*}
\zeta_* \sO_{\sX} \cong \zeta_* \tg^* \so_{\sO_{\sY}} \cong \ug^* \pi_* \sO_{\sY} 
\cong 
\underbrace{\ug^* \sO_{Y}}_{ \parbox{55pt}{\tiny $\pi$ is a coarse moduli map}} 
\cong
\sO_{\sX'}.
\end{equation*}
Hence $\gamma_* \sO_{\sX} \cong \sO_{X}$ and $\gamma$ is a coarse moduli morphism indeed.
\end{proof}

\begin{lem}
\label{lem:index_one_tower_is_a_tower_of_stable_stacks}
The index-one cover $(\sX \to \sY \to B)$ of a fibration $(X \to Y \to B)$  of stable varieties defined in Definition \ref{defn:index_one_tower} is indeed a fibration of stable stacks. Furthermore, it is an admissible fibration of stable stacks. 
\end{lem}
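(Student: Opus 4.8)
The plan is to check the three things that make up the statement: that $\tf_1\colon\sY\to B$ and $\tf_2=\tg\colon\sX\to\sY$ are families of stable stacks, and that the fibration $(\sX\to\sY\to B)$ is admissible in the sense of Definition \ref{defn:tower_of_stable_stacks}. I keep the notation of the proof of Lemma \ref{lem:pi_is_a_coarse_moduli map}: write $\sX':=\left[\Spec_X\left(\bigoplus_{m\in\bZ}\omega_{X/Y}^{[m]}\right)/\bG_m\right]$ for the absolute index-one covering stack of the stable family $g\colon X\to Y$, with coarse moduli map $\eta\colon\sX'\to X$ and structure morphism $\ug\colon\sX'\to Y$, so that $\sX=\sX'\times_Y\sY$ and $\zeta\colon\sX\to\sX'$, $\tg\colon\sX\to\sY$ are the two projections; thus $\tg$ is the base change of $\ug$ along $\pi\colon\sY\to Y$, and $\gamma=\eta\circ\zeta$.

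That $\tf_1\colon\sY\to B$ is a family of stable stacks is immediate from Theorem \ref{thm:abramovich_hassett}, since by Definition \ref{defn:index_one_tower} the stack $\sY$ is the index-one covering stack of the stable family $f_1\colon Y\to B$. For $\tg$, note that $g\colon X\to Y$ is a stable family by Definition \ref{defn:fibration_of_stable_varieties}, so Theorem \ref{thm:abramovich_hassett} shows that $\ug\colon\sX'\to Y$ is a family of stable stacks. Now $\tg$ is the base change of $\ug$ along $\pi$; flatness, the condition that every fibre over a $k$-point be a stable stack, and the condition that the relative dualizing sheaf be a uniformizing line bundle are all preserved by base change and are \'etale-local on the base, so $\tg\colon\sX\to\sY$ is again a family of stable stacks. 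In particular $\omega_{\sX/\sY}$ is a line bundle, and the base-change compatibility of relative dualizing sheaves of families of stable stacks gives the identification $\omega_{\sX/\sY}\cong\zeta^*\omega_{\sX'/Y}$ (this is part of why index-one covers behave well, cf.\ \cite{Abramovich_Hassett_Stable_varieties_with_a_twist}).

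It remains to prove admissibility, i.e.\ that $\pi_*\tg_*\omega_{\sX/\sY}^m$ is locally free for all sufficiently divisible $m$. Since $\omega_{\sX'/Y}$ restricts to the ample sheaf $\omega_{\sX'_y}$ on every fibre of $\ug$, it is $\ug$-ample, so by relative Serre vanishing $R^{>0}\ug_*\omega_{\sX'/Y}^m=0$ for $m\gg 0$; concretely one may push forward through $\eta$, which is exact on quasi-coherent sheaves in characteristic zero and satisfies $\eta_*\omega_{\sX'/Y}^m\cong\omega_{X/Y}^{[m]}$, thereby reducing to Serre vanishing for the projective morphism $g$ and the relatively ample $\bQ$-line bundle $\omega_{X/Y}$. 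Hence $\ug_*\omega_{\sX'/Y}^m\cong g_*\omega_{X/Y}^{[m]}$ is flat over $Y$ and, being coherent, locally free. Using $\omega_{\sX/\sY}\cong\zeta^*\omega_{\sX'/Y}$ together with cohomology and base change for algebraic stacks \cite{Hall_Cohomology_and_base_change_for_algebraic_stacks}, applied to the Cartesian square with edges $\zeta,\ug,\pi,\tg$ and the vanishing just established, one gets $\tg_*\omega_{\sX/\sY}^m\cong\pi^*\ug_*\omega_{\sX'/Y}^m$. Finally $\pi\colon\sY\to Y$ is a coarse moduli morphism by Lemma \ref{lem:pi_is_a_coarse_moduli map}, so $\pi_*\sO_\sY\cong\sO_Y$ in characteristic zero, and the projection formula yields $\pi_*\tg_*\omega_{\sX/\sY}^m\cong\ug_*\omega_{\sX'/Y}^m$, which is locally free. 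This establishes admissibility.

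The step I expect to be the main obstacle is the middle one: making precise that the fibre product $\sX=\sX'\times_Y\sY$ over the stacky base $\sY$ really is a family of stable stacks, and in particular that $\omega_{\sX/\sY}$ remains a uniformizing line bundle identified with $\zeta^*\omega_{\sX'/Y}$ — this is exactly the point where the relative index-one cover differs from the absolute one, as flagged after Definition \ref{defn:index_one_tower}. Once that identification is in hand, admissibility is a bookkeeping computation that, via the coarse moduli morphisms $\eta$ and $\pi$, reduces to cohomology and base change for the already-understood scheme-level stable family $g\colon X\to Y$.
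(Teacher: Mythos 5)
Your argument is correct, and its first half (that the index-one cover is a fibration of stable stacks) coincides with the paper's: both reduce to Theorem \ref{thm:abramovich_hassett} together with the base-change invariance of the notion of a family of stable stacks. For admissibility the two proofs compute $\pi_*\tg_*\omega_{\sX/\sY}^m$ by going around the commutative square in opposite orders. The paper writes $\pi_*\tg_* = g_*\gamma_*$ and identifies $\gamma_*\omega_{\sX/\sY}^m$ with $\omega_{X/Y}^{[m]}$ by a reflexive-extension argument: both sheaves are flat and relatively $S_2$ over $B$ (via Lemmas \ref{lem:stack_coarse_pushforward} and \ref{lem:tower_of_stable_schemes_relative_canonical_relative_S_2}) and agree in relative codimension one, hence agree globally by \cite[Corollary 3.8]{Hassett_Kovacs_Reflexive_pull_backs}; then $g_*\omega_{X/Y}^{[m]}$ is locally free for divisible $m\gg0$. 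You instead push forward along $\tg$ first, using the factorization $\sX=\sX'\times_Y\sY$, the identification $\omega_{\sX/\sY}\cong\zeta^*\omega_{\sX'/Y}$, and cohomology and base change along the (non-flat) morphism $\pi$ to get $\tg_*\omega_{\sX/\sY}^m\cong\pi^*\ug_*\omega_{\sX'/Y}^m$, and then collapse $\pi_*\pi^*$ by the projection formula and $\pi_*\sO_{\sY}\cong\sO_Y$. Both routes land on $g_*\omega_{X/Y}^{[m]}$. Your version trades the $S_2$-sheaf bookkeeping for Hall's cohomology-and-base-change theorem, which the paper itself uses in exactly this way in Lemmas \ref{lem:admissible_open} and \ref{lem:coarse_tower_is_a_tower_of_stable_schemes}, so all the ingredients are available; the two points that carry the weight are the fiberwise vanishing $H^{>0}(\sX'_y,\omega_{\sX'_y}^m)=0$ for divisible $m\gg0$ (which follows, as you say, by pushing to the coarse space and Serre vanishing) and the base-change identity $\omega_{\sX/\sY}\cong\zeta^*\omega_{\sX'/Y}$, which is part of the functoriality of families of stable stacks in \cite{Abramovich_Hassett_Stable_varieties_with_a_twist}. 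Neither is a gap.
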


\begin{proof}

By \cite[Theorem 5.3.6]{Abramovich_Hassett_Stable_varieties_with_a_twist} and the fact that the notion of a family of stable stacks is invariant under base-change, $(\sX \to \sY \to B)$ is a fibration of stable stacks. 
To prove admissibility, first note that by Lemma \ref{lem:pi_is_a_coarse_moduli map}, $(X \to Y \to B)$ is the coarse moduli fibration of $(\sX \to \sY \to B)$.  Second, note also that by Lemma \ref{lem:tower_of_stable_stacks_relative_canonical_relative_S_2}, $\omega_{\sX/\sY}^m$ is  flat and relatively $S_2$ over $B$. Hence, by Lemma \ref{lem:stack_coarse_pushforward}.\ref{itm:stack_coarse_pushforward:S_r}, $\gamma_* (\omega_{\sX/\sY}^m)$ is flat and relatively $S_2$ over $B$ as well. Furthermore, it is isomorphic in relative codimension 1  to $\omega_{X/Y}^{[m]}$ which is also flat and relatively $S_2$ over $B$ according to Lemma \ref{lem:tower_of_stable_schemes_relative_canonical_relative_S_2}.  So, these two sheaves are isomorphic globally by \cite[Corollary 3.8]{Hassett_Kovacs_Reflexive_pull_backs}. That is,
\begin{equation*}
\pi_* \tg_* \omega_{\sX/\sY}^m \cong g_* \gamma_*  \omega_{\sX/\sY}^m \cong g_*   \omega_{X/Y}^{[m]},
\end{equation*}
which is locally free for all divisible enough $m \gg 0$. This concludes our proof.
\end{proof}

\begin{lem}
\label{lem:coarse_tower_is_a_tower_of_stable_schemes}
The coarse fibration $( X \to Y \to B)$ as in \eqref{eq:tower_of_stable_stacks_coarse_maps} of an admissible fibration of stable stacks $(\sX \to \sY \to B)$  is a fibration of stable varieties. 
\end{lem}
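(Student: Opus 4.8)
By Definition~\ref{defn:fibration_of_stable_varieties} we must show that both $f_1\colon Y\to B$ and $g=f_2\colon X\to Y$ in \eqref{eq:tower_of_stable_stacks_coarse_maps} are families of stable varieties and that the Euler characteristics $\chi\bigl(\omega_{(X_i)_y}^{[m]}\bigr)$ are locally constant in $y$, so that Hilbert function vectors are defined on each connected component of $B$. The assertion for $f_1$ is immediate: $\tf_1\colon\sY\to B$ is a family of stable stacks, so by Theorem~\ref{thm:abramovich_hassett} its coarse moduli space $Y\to B$ is a family of stable varieties. All the substance lies in the analysis of $g$, and the plan is to describe $X$, relatively over $Y$, as a relative $\Proj$ of a canonical ring and then to read off flatness, the fibres, and Koll\'ar's condition from admissibility.

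Set $\sL:=\omega_{\sX/\sY}$, which is a $\tg$-ample line bundle because $\tg$ is a family of stable stacks, and let $\cR:=\bigoplus_{m\ge 0}\pi_*\tg_*\sL^{m}$, a graded $\sO_Y$-algebra with $\cR_0=\pi_*\tg_*\sO_{\sX}=\pi_*\sO_{\sY}=\sO_Y$. First I would prove $X\cong\Proj_Y(\cR)$ compatibly with $g$. This is \'etale local on $Y$, so one may assume $\sY\cong[V/G]$ with $V$ a scheme and $G$ finite, $\nu\colon V\to V/G=Y$; write $\sX_V:=\sX\times_{\sY}V$, with $\tg_V$, $\sL_V$ the pullbacks of $\tg$, $\sL$, so that $\sX_V$ carries a $G$-action and $\sX\cong[\sX_V/G]$. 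Over the scheme $V$, Lemma~\ref{lem:proj_is_coarse_moduli_space} (equivalently the construction behind Theorem~\ref{thm:abramovich_hassett}) identifies the coarse space $Z_V$ of $\sX_V$ with $\Proj_V\!\bigl(\bigoplus_m(\tg_V)_*\sL_V^{m}\bigr)$; taking $G$-quotients then identifies the coarse space of $[\sX_V/G]=\sX$, namely $X$, with $\Proj_Y\!\bigl(\bigl(\nu_*\bigoplus_m(\tg_V)_*\sL_V^{m}\bigr)^{G}\bigr)$; and $\bigl(\nu_*(\tg_V)_*\sL_V^{m}\bigr)^{G}=\pi_*\tg_*\sL^{m}=\cR_m$. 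These local descriptions glue to $X\cong\Proj_Y(\cR)$ (cf.\ Remark~\ref{rem:coarse_tower_with_proj}).

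Now I would invoke admissibility. Choose $N>0$ with $\pi_*\tg_*\sL^{m}$ locally free for all $m$ divisible by $N$, and replace $\cR$ by its $N$-th Veronese subalgebra; this changes neither $\Proj_Y(\cR)$ nor anything below, and now every $\cR_m$ is a locally free $\sO_Y$-module of finite rank and $\cR$ is a finitely generated $\sO_Y$-algebra. On an affine chart $\Spec A\subseteq Y$ and a basic open $D_+(s)$ of $X$ with $s\in\cR_d$, the structure ring of $X$ is $(\cR[s^{-1}])_0=\varinjlim_k\cR_{kd}$, a filtered colimit of locally free $A$-modules, hence flat; therefore $g$ is flat. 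For the fibres, if $\eta$ is a geometric point of $\sY$ over $y\in Y$, then the fibre $\sX_\eta$ of $\tg$ is a stable stack, and cohomology and base change on $\sX$ — together with the fact that $\pi_*$ commutes with base change, \cite[Lemma~2.2.3]{Abramovich_Vistoli_Compactifying_the_space_of_stable_maps} — gives $\cR_m\otimes\kappa(y)\cong H^0(\sX_\eta,\omega_{\sX_\eta}^{m})$; hence $X_y=\Proj_{\kappa(y)}(\cR\otimes\kappa(y))$ is the coarse moduli space of $\sX_\eta$, which is a stable variety. Koll\'ar's condition and relative ampleness of $\omega_{X/Y}$ then follow by comparing, where $\gamma$ is an isomorphism — the complement has relative codimension at least two over $Y$, the fibres being $G_1$ — the reflexive sheaf $\omega_{X/Y}^{[m]}$ with $\gamma_*\sL^{m}$: the latter is flat and relatively $S_2$ over $Y$ (Lemmas~\ref{lem:tower_of_stable_stacks_relative_canonical_relative_S_2} and \ref{lem:stack_coarse_pushforward}.\ref{itm:stack_coarse_pushforward:S_r}) and agrees there with the former, so the two coincide by \cite[Theorem~1.12]{Hartshorne_Generalized_divisors_on_Gorenstein_schemes}; thus $\omega_{X/Y}$ is a $\bQ$-line bundle, $\omega_{X/Y}^{[m]}=\gamma_*\sL^{m}$ is flat and relatively $S_2$, Koll\'ar's condition holds by \cite[Proposition~3.6 and Corollary~3.8]{Hassett_Kovacs_Reflexive_pull_backs}, and ampleness of $\omega_{\sX_\eta}$ descends to give $\omega_{X/Y}$ relatively ample by \cite[Theorem~1.7.8]{Lazarsfeld_Positivity_in_algebraic_geometry_I}. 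Since $f_1$ and $g$ are flat proper families obeying Koll\'ar's condition, $\chi\bigl(\omega_{Y_b}^{[m]}\bigr)$ and $\chi\bigl(\omega_{X_y}^{[m]}\bigr)$ are locally constant, producing the required Hilbert function vectors; so $(X\to Y\to B)$ is a fibration of stable varieties.

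The delicate point is that the coarse moduli morphism $\pi\colon\sY\to Y$ is \emph{not} flat, so $X\to Y$ cannot be obtained from $\tg\colon\sX\to\sY$ by descent along $\pi$; instead everything must be routed through the relative canonical ring, and admissibility is precisely the hypothesis that makes this relative $\Proj$ flat over $Y$ with the expected fibres. I expect the bulk of the work to lie in the bookkeeping behind the identifications above — tracking which sheaves on $\sX$ are flat and relatively $S_2$ over which of $B$, $Y$, $\sY$ through the coarse moduli maps $\gamma$ and $\pi$, and using that the formation of coarse spaces commutes with base change in characteristic zero \cite[Lemma~2.3.3]{Abramovich_Vistoli_Compactifying_the_space_of_stable_maps} — rather than in any single conceptual step.
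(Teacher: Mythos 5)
Your overall route --- identify $X$ with $\Proj_Y\bigl(\bigoplus_m \pi_*\tg_*\omega_{\sX/\sY}^m\bigr)$ and read off flatness, the fibres and Koll\'ar's condition from admissibility --- is the same as the paper's, and several of your steps are cleaner: the filtered-colimit argument for flatness of $g$ replaces the paper's appeal to the flattening stratification pulled back from a finite scheme cover $Z\to\sY$, and your verification of Koll\'ar's condition by comparing $\omega_{X/Y}^{[m]}$ with $\gamma_*\omega_{\sX/\sY}^m$ mirrors Lemmas \ref{lem:index_one_tower_is_a_tower_of_stable_stacks} and \ref{lem:forgetful_yields_stable_family}. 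However, one step does not follow from what you cite, and it is exactly where the substance of admissibility lives. You assert that cohomology and base change together with \cite[Lemma 2.2.3]{Abramovich_Vistoli_Compactifying_the_space_of_stable_maps} give $\cR_m\otimes\kappa(y)\cong H^0(\sX_\eta,\omega_{\sX_\eta}^m)$. What those facts actually give is $\cR_m\otimes\kappa(y)\cong H^0(\sX_\eta,\omega_{\sX_\eta}^m)^{G_y}$, the invariants under the stabilizer $G_y$ of $\sY$ at $\eta$: the base change $\sY\times_Y\Spec\kappa(y)$ is the residual gerbe $B G_y$, and pushing forward to $\Spec\kappa(y)$ takes invariants. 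If $G_y$ acted nontrivially on the section ring, then $X_y=\Proj(\cR\otimes\kappa(y))$ would be a proper quotient of the coarse space of $\sX_\eta$, and need not be a stable variety with the expected Hilbert function.

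The missing input is the descent isomorphism $\pi^*\pi_*\tg_*\omega_{\sX/\sY}^m\cong\tg_*\omega_{\sX/\sY}^m$ (equivalently, triviality of the stabilizer actions on the fibres of $\tg_*\omega_{\sX/\sY}^m$), which is the central claim of the paper's proof: it is deduced from admissibility by observing that $\pi^*\pi_*\tg_*\omega_{\sX/\sY}^m$ is locally free, hence flat and relatively $S_2$ over $B$, and agrees with $\tg_*\omega_{\sX/\sY}^m$ in relative codimension one because $\pi$ is an isomorphism there, so Lemma \ref{lem:stacky_Hassett_Kovacs} applies. (A rank count would also close your gap: $\dim_{\kappa(y)}(\cR_m\otimes\kappa(y))$ is the constant rank of the locally free sheaf $\cR_m$, which at a point with trivial stabilizer equals $h^0(\sX_{\eta},\omega_{\sX_\eta}^m)$; since $\tg_*\omega_{\sX/\sY}^m$ is locally free this forces the invariants to have full dimension at every $y$, i.e.\ the action is trivial.) With that isomorphism in hand your fibre identification, and hence the rest of your argument, goes through; without it the key step is unjustified. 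A minor further point: \cite[Theorem 1.12]{Hartshorne_Generalized_divisors_on_Gorenstein_schemes} is an absolute statement; for gluing two sheaves that are flat and relatively $S_2$ over $Y$ and agree in relative codimension one you should instead invoke \cite[Proposition 3.6]{Hassett_Kovacs_Reflexive_pull_backs} or Lemma \ref{lem:stacky_Hassett_Kovacs}.
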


\begin{proof}
First, note that by Theorem \ref{thm:abramovich_hassett} $Y \to B$ is a family of stable varieties.  We need to show that so is $g \colon X \to Y$.  First, we \emph{claim that for big and divisible enough $m$}, 
\begin{equation}
\label{eq:coarse_tower_is_a_tower_of_stable_schemes:base_change}
\pi^* \pi_* \tg_* \omega_{\sX/\sY}^m \cong \tg_* \omega_{\sX/\sY}^m .
\end{equation}
Indeed, 
the shaves $\tg_* \omega_{\sX/\sY}^m$ are locally free for all $m \gg 0$. Choose an $m$ for which this holds and also $\pi_* \tg_* \omega_{\sX/\sY}^m $ is  locally free. Then, $\pi^* \pi_* \tg_* \omega_{\sX/\sY}^m $ is locally free as well, and in particular it is flat and relatively $S_2$ over $B$. Furthermore, since $\pi$ is isomorphism in relative codimension one over $B$, this sheaf is isomorphic to $\tg_* \omega_{\sX/\sY}^m$ in relative codimension one. Therefore by Lemma \ref{lem:stacky_Hassett_Kovacs} we obtain \eqref{eq:coarse_tower_is_a_tower_of_stable_schemes:base_change}. This finishes the proof of our claim.

Define then
\begin{equation*}
X := \Proj_{Y} \left( \bigoplus_{m \geq 0} \pi_* \tg_* \omega_{\sX/\sY}^m \right) 
\textrm{ and }
\sX' := \Proj_{\sY} \left( \bigoplus_{m \geq 0} \tg_* \omega_{\sX/\sY}^m \right) .
\end{equation*}
Notice  that by \eqref{eq:coarse_tower_is_a_tower_of_stable_schemes:base_change}, $\sX' \cong X \times_{Y} \sY$. Further note that by Lemma \ref{lem:proj_is_coarse_moduli_space} applied to $\pi \circ \tg \colon \sX \to Y$, $X$ is the coarse moduli space of $\sX$. We just have to prove that $f \colon X \to Y$ is a stable family. Choose now a scheme $Z$ that maps finitely and surjectively to $\sY$ via $\xi \colon Z \to \sY$. Pulling back $\sX'$ (or equivalently $X$), $\sX$ and the natural morphism  over $Z$ we obtain a diagram
\begin{equation*}
\xymatrix{
X \times_Y Z  \cong \sX' \times_{\sY} Z \cong  \Proj_{Z} \left( \bigoplus_{m \geq 0} \tg_{Z,*} \omega_{\sX_Z/Z}^m \right) \ar[d] & \ar[l] \ar[dl]^{\tg_Z} \sX \times_{\sY} Z =: \sX_Z \\
Z
} . 
\end{equation*}
Note that $\sX' := \Proj_{\sY} \left( \bigoplus_{m \geq 0} \tg_* \omega_{\sX/\sY}^m \right) $ a priori pulls back to $\Proj_{Z} \left( \bigoplus_{m \geq 0} \xi^* \tg_* \omega_{\sX/\sY}^m \right) $. However, the isomorphism class of $\Proj$ is not affected by passing to a Veronese subalgebra, and for divisible enough $m$ we have $\xi^* \tg_* \omega_{\sX/\sY}^m \cong  \tg_{Z,*} \omega_{\sX_Z/Z}^m$ by the relative ampleness of $\omega_{\sX/\sY}$ and cohomology and base-change \cite[Theorem A]{Hall_Cohomology_and_base_change_for_algebraic_stacks}. 

By Lemma \ref{lem:proj_is_coarse_moduli_space} and Theorem \ref{thm:abramovich_hassett} $X \times_Y Z \to Z$ is a stable family. So, in particular its fibers are stable varieties. Since the fibers of $g\colon X \to Y$ are also fibers of $X \times_Y Z \to Z$ we see that the fibers of the former are stable varieties. We are left to show that $g$ is flat and that condition \eqref{eq:stable_family:Kollar_condition} holds for it. For the first one, note that by flattening decomposition \cite[Lecture 8]{Mumford_Lectures_on_curves_on_an_algebraic_surface} (also \cite[Theorem 1]{Kollar_Hulls_and_Husks}) there is a locally closed $\cup_j Y^j = Y$, such that for $T \to Y$, $X \times_{Y} T$ is flat over $T$ if and only if $T \to Y$ factorizes through $Y^j$ for some $j$. Applying this to $T=Z$, the image of $Z \to Y$ has to factor through $Y^j$ for some $j$.  Therefore, by the surjectivity of $Z \to Y$, $Y^j = Y$ for some $j$ and therefore $g$ is flat. Condition \eqref{eq:stable_family:Kollar_condition} is shown 
similarly but 
using the locally closed decomposition given by  \cite[Corollary 25]{Kollar_Hulls_and_Husks}.

\end{proof}

\begin{rem}
\label{rem:coarse_tower_with_proj}
Note that the proof of Lemma \ref{lem:coarse_tower_is_a_tower_of_stable_schemes} yields also that if $(\sX \to \sY \to B)$ is an admissible fibration of stable stacks, then the coarse fibration $( X \to Y \to B)$  can be described using the notations of
\eqref{eq:tower_of_stable_stacks_coarse_maps} as
\begin{equation*}
X \cong \Proj_{Y} \left( \bigoplus_{m \geq 0} \pi_* \tg_* \omega_{\sX/\sY}^m \right) .
\end{equation*}
Furthermore, similarly $\sX':= X \times_{Y} \sY$ can be described as
\begin{equation*}
\sX' := \Proj_{\sY} \left( \bigoplus_{m \geq 0} \tg_* \omega_{\sX/\sY}^m \right),
\end{equation*}
and if $\rho \colon \sX \to \sX'$ is the natural morphism then $\rho_* \sO_{\sX} \cong \sO_{\sX'}$. In particular, $\rho$ becomes a coarse moduli map after pulling back via any finite or \'etale cover of $\sY$ by a scheme. 
\end{rem}

The following theorem is the promised fibration version of Theorem \ref{thm:abramovich_hassett}. The previous two lemmas guarantee that the two functors in the statement do make sense.

\begin{thm}
\label{thm:tower_equivalence}
There is an equivalence of the category $\fF \fM_{\um}$ of fibrations of stable varieties of dimension vector $\um$ introduced in Definition \ref{defn:fibration_of_stable_varieties}  and of the category of admissible fibrations of stable stacks $\mathfrak{Fibr}_{\um}$ with the same dimension vector given by the above functors
\begin{equation}
\label{eq:tower_equivalence:scheme_to_stack}
\begin{matrix}
\fF \fM_{\um}(B) & \to & \mathfrak{Fibr}_{\um}(B) \\
(X \to Y \to B) & \mapsto & (\sX \to \sY \to B) = \textrm{the index-one cover of } (X \to Y \to B),
\end{matrix}
\end{equation}
and
\begin{equation}
\label{eq:tower_equivalence:stack_to_scheme}
\begin{matrix}
\mathfrak{Fibr}_{\um}(B) & \to & \fF \fM_{\um}(B) \\
(\sX \to \sY \to B) & \mapsto & (X \to Y \to B) = \textrm{the coarse fibration of } (\sX \to \sY \to B).
\end{matrix} 
\end{equation}
\end{thm}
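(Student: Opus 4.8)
The plan is to prove that the functors \eqref{eq:tower_equivalence:scheme_to_stack} and \eqref{eq:tower_equivalence:stack_to_scheme} are mutually quasi-inverse for every test scheme $B$, and that they are compatible with base change along any $B' \to B$ and carry Cartesian diagrams to Cartesian diagrams; this is exactly what it means for them to define an equivalence of the categories fibered in groupoids $\fF \fM_{\um}$ and $\mathfrak{Fibr}_{\um}$. That the two functors land in the asserted categories is Lemma \ref{lem:index_one_tower_is_a_tower_of_stable_stacks} and Lemma \ref{lem:coarse_tower_is_a_tower_of_stable_schemes}. On the middle term of a fibration the two functors restrict to the functors of Theorem \ref{thm:abramovich_hassett} between $\ofM_{m_1}(B)$ and $\mathfrak{Stab}_{m_1}(B)$, which are already known to be quasi-inverse; so the only genuinely new work is to see that the two round trips are the identity on the total space, together with the routine check of naturality.

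For the composite \eqref{eq:tower_equivalence:stack_to_scheme}$\circ$\eqref{eq:tower_equivalence:scheme_to_stack}: given $(X \to Y \to B)$, let $(\sX \to \sY \to B)$ be its index-one cover and form the coarse fibration. Its middle term is the coarse space of $\sY$, which is $Y$ by Theorem \ref{thm:abramovich_hassett}, and by Remark \ref{rem:coarse_tower_with_proj} its total space is $\Proj_Y\left( \bigoplus_{m \geq 0} \pi_* \tg_* \omega_{\sX/\sY}^m \right)$. In the proof of Lemma \ref{lem:index_one_tower_is_a_tower_of_stable_stacks} we computed $\pi_* \tg_* \omega_{\sX/\sY}^m \cong g_* \omega_{X/Y}^{[m]}$ for all divisible $m$, and since $g \colon X \to Y$ is a family of stable varieties $\omega_{X/Y}$ is relatively ample, so $\omega_{X/Y}^{[Nm]}$ is a relatively ample line bundle for a suitable $N$ and $X \cong \Proj_Y\left( \bigoplus_{k \geq 0} g_* \omega_{X/Y}^{[Nmk]} \right)$; as passing to a Veronese subalgebra does not change $\Proj$, the total space recovered is canonically $X$, compatibly with the morphisms to $Y$ and to $B$. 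Hence $(X \to Y \to B)$ is recovered.

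For the composite \eqref{eq:tower_equivalence:scheme_to_stack}$\circ$\eqref{eq:tower_equivalence:stack_to_scheme}: given an admissible $(\sX \to \sY \to B)$ with coarse fibration $(X \to Y \to B)$, form the index-one cover $(\sX'' \to \sY'' \to B)$. Again $\sY'' = \sY$ by Theorem \ref{thm:abramovich_hassett}, so one must identify $\sX'' = \left[ \Spec_X\left( \bigoplus_{m \in \bZ} \omega_{X/Y}^{[m]} \right) / \bG_m \right] \times_Y \sY$ with $\sX$ over $\sY$. The mechanism is that $\sX' := X \times_Y \sY$ is a relative coarse moduli space of $\sX$ over $\sY$: by Remark \ref{rem:coarse_tower_with_proj}, $\sX' \cong \Proj_{\sY}\left( \bigoplus_{m \geq 0} \tg_* \omega_{\sX/\sY}^m \right)$ and the natural $\rho \colon \sX \to \sX'$ satisfies $\rho_* \sO_{\sX} \cong \sO_{\sX'}$ and becomes a coarse moduli map after base change along any cover of $\sY$ by a scheme. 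Pulling back along such a cover $W \to \sY$, the family $\sX_W \to W$ is a family of stable stacks with relative coarse space $\sX'_W := \sX' \times_{\sY} W \cong X \times_Y W$, so Theorem \ref{thm:abramovich_hassett} applied over $W$ identifies $\sX_W$ with the index-one cover $\left[ \Spec_{\sX'_W}\left( \bigoplus_m \omega_{\sX'_W/W}^{[m]} \right) / \bG_m \right]$. Since $\pi \colon \sY \to Y$ is an isomorphism in codimension one, $\sX' \to X$ is an isomorphism in relative codimension one over $\sY$, so $\omega_{\sX'/\sY}^{[m]}$ agrees in relative codimension one with the reflexive pullback of $\omega_{X/Y}^{[m]}$; both being relatively $S_2$, they agree by \cite[Corollary 3.8]{Hassett_Kovacs_Reflexive_pull_backs}, and likewise after base change to $W$. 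Therefore $\sX_W$ is canonically the pullback of $\sX''$ to $W$, and by the uniqueness in Theorem \ref{thm:abramovich_hassett} these identifications descend to an isomorphism $\sX \cong \sX''$ over $\sY$.

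It remains to record naturality. The index-one cover of a fibration commutes with arbitrary base change $B' \to B$ because $\omega_{X/Y}^{[m]}$ does (Koll\'ar's condition together with the reflexive base-change statements already invoked in Section \ref{sec:definition}), and the coarse fibration commutes with base change because the formation of coarse moduli spaces does \cite[Lemma 2.3.3]{Abramovich_Vistoli_Compactifying_the_space_of_stable_maps}, because admissibility makes the sheaves $\pi_* \tg_* \omega_{\sX/\sY}^m$ locally free and base-change compatible (exactly as in the proof of Lemma \ref{lem:admissible_open}, using \cite[Theorem A]{Hall_Cohomology_and_base_change_for_algebraic_stacks}), and because $\Proj$ commutes with base change; both functors obviously take Cartesian squares to Cartesian squares. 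Assembling these statements yields the asserted equivalence. I expect the main obstacle to be the stack-to-scheme-to-stack direction: one must promote the relative Abramovich--Hassett correspondence, which is phrased over schemes, to the stacky base $\sY$ by \'etale (or finite) descent, and match the reflexive powers of the relative dualizing sheaves under the base change $\sX' = X \times_Y \sY \to X$; admissibility is precisely the hypothesis that makes the intermediate $\Proj$-description of Remark \ref{rem:coarse_tower_with_proj} available, so that this descent can actually be carried out.
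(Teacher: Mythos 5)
Your proposal is correct, and the easy direction (scheme $\to$ stack $\to$ scheme) plus the reduction of the middle term to Theorem \ref{thm:abramovich_hassett} match the paper; but for the harder direction (stack $\to$ scheme $\to$ stack) you take a genuinely different route. The paper observes that both $\sX$ and the index-one cover $\sX_0$ of the coarse fibration are families of stable stacks over $\sY$ that are canonically isomorphic over the relative Gorenstein locus $\sY_{\Gor}$ (where $\pi\colon\sY\to Y$ is an isomorphism), and then extends this isomorphism to all of $\sY$ by passing to an \'etale atlas $U\to\sY$ and invoking the separatedness/properness of $\Isom_U(\sX\times_\sY U,\sX_0\times_\sY U)$ coming from the fact that $\mathfrak{Stab}_n$ is a separated DM-stack; the cocycle condition on $U\times_\sY U$ is checked the same way. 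You instead exploit the relative coarse space $\sX'=X\times_Y\sY$ of Remark \ref{rem:coarse_tower_with_proj}: after pulling back to a chart $W\to\sY$ the map $\sX_W\to\sX'_W$ is an honest coarse moduli map, so Theorem \ref{thm:abramovich_hassett} over $W$ identifies $\sX_W$ canonically with the index-one cover of $X\times_Y W\to W$, which you then match with $\sX''_W$ via Koll\'ar's condition (equivalently your relative-codimension-one plus $S_2$ argument), and descend using the naturality of the Abramovich--Hassett unit/counit. What each approach buys: the paper's extension argument only needs the identification over the generic (Gorenstein) locus and outsources everything else to separatedness of $\mathfrak{Stab}_n$, so it never has to compare reflexive canonical algebras under the non-flat base change $W\to Y$; your argument avoids the $\Isom$-stack machinery entirely and produces a visibly canonical isomorphism, which makes the cocycle check on $W\times_\sY W$ essentially automatic --- though you are implicitly using that arrows between such fibrations have no nontrivial automorphisms (the paper's remark following the theorem) so that descent of isomorphisms is a $1$-categorical matter. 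Your explicit paragraph on base-change naturality of the two functors is a point the paper treats only implicitly.
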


\begin{rem}
Recall that a morphisms of stacks is a functor and two morphisms are said to be equivalent if the corresponding functors are naturally isomorphic. When building a moduli space of stacks, it can be useful to remember these natural isomorphism as well, thus obtaining a 2-category where arrows can also have isomorphisms.  However in the case of $\mathfrak{Fibr}_{\um}$ the 2-category approach turns out to be unnecessary, because no arrow 
\begin{equation}
\label{eq:arrow}
(\sX \to \sY \to B) \to (\sX' \to \sY' \to B') 
\end{equation}
between fibrations of stable stacks as in \eqref{eq:tower_of_stable_stacks} has non-trivial automorphisms (and hence isomorphisms between arrows are unique if they exist). Indeed, by \cite[Lemma 4.2.3]{Abramovich_Vistoli_Compactifying_the_space_of_stable_maps},  $\sX \to \sX'$ and $\sY \to \sY'$ do not have non-trivial automorphisms and then it follows that also \eqref{eq:arrow} does not have any. Using the categorical language,  the 2-category $\mathfrak{Fibr}_{\um}$ is equivalent to a $1$-category.
\end{rem}

\begin{proof}[Proof of Theorem \ref{thm:tower_equivalence}]

\emph{``Step 1: \eqref{eq:tower_equivalence:scheme_to_stack} applied first and then \eqref{eq:tower_equivalence:stack_to_scheme}'' is naturally isomorphic to identity.} We have to show that the coarse moduli space of $\sX$ and $\sY$, defined in \eqref{eq:tower_equivalence:scheme_to_stack}, is $X$ and $Y$. However, this has already been shown in lemma \ref{lem:pi_is_a_coarse_moduli map}.

\emph{Step 2: ``\eqref{eq:tower_equivalence:stack_to_scheme} applied first and then \eqref{eq:tower_equivalence:scheme_to_stack}'' is naturally isomorphic to identity.}  Given an admissible fibration of stable stacks $(\sX \to \sY \to B)$, if $X$ and $Y$ the coarse moduli spaces of $\sX$ and $\sY$ as in \eqref{eq:tower_equivalence:stack_to_scheme}, and $(\sX_0 \to \sY_0 \to B)$ is the index-one cover of the tower of families of stable varieties $(X \to Y \to B)$  we are supposed to prove that  $(\sX_0 \to \sY_0 \to B)$ is isomorphic to $(\sX \to \sY \to B)$. The isomorphism of $(\sY_0 \to B )$ to $(\sY \to B)$ immediately follows from Theorem \ref{thm:abramovich_hassett}. 

Hence we have to show that $(\sX \to \sY)$ is isomorphic to $(\sX_0 \to \sY_0)$. Since we have already identified $\sY$ with $\sY_0$, really we have to show that $\sX_0 \to \sY$ is isomorphic to $\sX \to \sY$, where 
\begin{equation*}
\sX_0 := \left[ \Spec_{X} \left( \bigoplus_{m \in \bZ} \omega_{X/Y}^{[m]} \right) / \bG_m \right] \times_{Y} \sY .
\end{equation*}
By Theorem \ref{thm:abramovich_hassett}, the stack quotient in the above formula is a family of stable stacks. Hence, since the notion of a family of stable stacks is pullback invariant,  both $\sX_0$ and $\sX$ are families of stable stacks over $\sY$. Further, over the relative Gorenstein locus $\sY_{\Gor}$, where $\sY \to Y$ is an isomorphism, $\sX$ and $\sX_0$ are isomorphic by Theorem \ref{thm:abramovich_hassett}. The idea is to apply now that the moduli space $\mathfrak{Stab}_n$ of stable stacks is a separated DM-stack (c.f. Theorem \ref{thm:abramovich_hassett}, \cite[Proposition 6.1.4]{Abramovich_Hassett_Stable_varieties_with_a_twist}, \cite[Theorem 2.8]{Bhatt_Ho_Patakfalvi_Schnell_Moduli_of_products_of_stable_varieties}) and deduce then that $\sX$ is isomorphic to $\sX_0$ over the entire $\sY$. The only issue is that we  know the universal property of $\mathfrak{Stab}_n$ only for a map from a scheme to $\mathfrak{Stab}_n$. So, we have to pass to \'etale charts of $\sY$ to apply the above idea. 

Choose  an \'etale cover $s \colon U \to \sY$ by a scheme. Let $V:= U \times_{\sY} U$ and $p \colon V \to U$ and $q \colon V \to U$ the two projections.  We claim that \emph{there is an isomorphism  $\xi \colon \sX \times_{\sY} U \to \sX_0 \times_{\sY} U$ such that $p^* \xi = q^* \xi$}. This then implies the required isomorphism of $\sX$ and $\sX_0$ over $\sY$ since by the stack axioms, $\sX$ and $\sX_0$ glue in the \'etale toplogy. The existence  of $\xi$ follows similarly to the  argument of the previous paragraph: fix an isomorphism $\zeta\colon \sX|_{\sY_{\Gor}} \to \sX_0|_{\sY_{\Gor}}$. Then $\left(s|_{U_{\Gor}}\right)^* \zeta$ is an isomorphism of $\sX \times_{\sY} U_{\Gor}$ and $\sX_0 \times_{\sY} U_{\Gor}$, where $U_{\Gor}$ is the Gorenstein locus of $U$.  Then using that $\mathfrak{Stab}_n$ is separated and hence $\Isom_U (\sX \times_{\sY} U, \sX_0 \times_{\sY} U)$ is finite over $U$ yields that there is a unique extension of this isomorphism over the entire $U$.  Notice now that 
\begin{equation*}
p^* \xi|_{V_{\Gor}} =  \left(s \circ p|_{V_{\Gor}} \right)^* \zeta =  \left(s \circ q|_{V_{\Gor}} \right)^* \zeta = q^* \xi|_{V_{\Gor}}.
\end{equation*}
Now using the properness of $\Isom_V (\sX \times_{\sY} V, \sX_0 \times_{\sY} V)$ over $V$ implies that $p^* \xi $ and $q^* \xi$ agree over the entire $V$. This finishes concludes our claim and hence our proof as well.

\end{proof}

\subsection{Conclusion}

Using  Theorem \ref{thm:tower_equivalence}, we express explicitly what vanishing is needed to show Theorem \ref{thm:main}. The initial idea is that starting with a fibration of stable varieties $\uX=( X \to Y \to \Spec k)$ with index-one covering fibration $\usX=(\sX \to \sY \to \Spec k)$  use the following commutative diagram of deformation functors.
\begin{equation*}
\xymatrix{
\Def ( \usX) \ar[d]_{ \parbox{40pt}{\tiny forgetting the middle level}} \ar[rrr]^{\textrm{taking coarse moduli space}} & & & \Def_{\bQ}( \uX)  \ar[d]^{ \parbox{50pt}{\tiny forgetting the middle level}} & \\
\Def ( \sX ) \ar[rrr]_{\textrm{taking coarse moduli space}} &  & &  \Def_{\bQ}( X ) 
}
\end{equation*}
By Theorem \ref{thm:tower_equivalence}, the top horizontal arrow is an equivalence. In this section (in Proposition \ref{prop:equivalence}) we will also prove that the left vertical arrow is an equivalence. Then we would like to use Theorem \ref{thm:abramovich_hassett} to say that the bottom horizontal arrow is an equivalence as well, and then so is the right vertical one, which would conclude the proof of Theorem \ref{thm:main}. However, unfortunately Theorem \ref{thm:abramovich_hassett} does not apply to the lower horizontal arrow, since $\sX$ is not the index-one cover of $X$. Hence we factor the bottom arrow as
\begin{equation*}
\xymatrix{
\Def (\sX) \ar@/^2pc/[rrrrr]^{\textrm{taking coarse moduli space}} & \Def(  \sX \to \widetilde{\sX} ) \ar[l] \ar[r] &  \Def(\widetilde{\sX}) \ar[rrr]_{\textrm{taking coarse moduli space}} &  & &  \Def_{\bQ}( X ) 
}
\end{equation*}
where $\widetilde{\sX}$ is the index-one cover of $X$ and in the following proposition we show that the introduced new arrows are equivalences.

\begin{prop}
\label{prop:two_index_ones}
Given a fibration of stable varieties $(X \to Y \to  \Spec k)$,  let $(\sX \to \sY \to \Spec k)$ be the index-one cover of it as in Theorem \ref{thm:tower_equivalence} and  $\widetilde{\sX}$  the index-one cover of $X$ as in Theorem \ref{thm:abramovich_hassett}. Then there is a morphism $\phi \colon \sX \to \widetilde{\sX}$ factoring $\gamma \colon \sX \to X$, such that the following two natural functors of deformation spaces are equivalences
\begin{equation*}
 \Def(\widetilde{\sX}) \longleftarrow \Def(  \sX \to \widetilde{\sX} ) \longrightarrow  \Def(\sX).
\end{equation*}
\end{prop}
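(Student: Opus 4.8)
The plan is to construct $\phi$ explicitly, to record that $\phi$ is a proper morphism with $R\phi_*\sO_\sX\cong\sO_{\widetilde{\sX}}$, and then to read off both equivalences from the morphism--deformation comparison already used in the paper, whose hypotheses become automatic for such $\phi$.

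\emph{Construction of $\phi$.} Since $\tg\colon\sX\to\sY$ is a family of stable stacks and $\pi\colon\sY\to Y$ is the index-one cover of $Y$, the sheaves $\omega_{\sX/\sY}$ and $\omega_\sY$ are line bundles; hence so is $\omega_\sX$, which coincides with $\omega_{\sX/\sY}\otimes\tg^*\omega_\sY$ (both are reflexive and agree in codimension one, so Lemma~\ref{lem:stacky_Hassett_Kovacs} applies over $\Spec k$). By Lemma~\ref{lem:tower_of_stable_stacks_relative_canonical_relative_S_2} the powers $\omega_\sX^m$ are $S_2$, hence so are the $\gamma_*\omega_\sX^m$ by Lemma~\ref{lem:stack_coarse_pushforward}; as $\gamma$ is an isomorphism in codimension one (the stacky locus of $\sX$, coming from the loci where $\omega_{X/Y}$ and $\omega_Y$ fail to be locally free, has codimension $\ge 2$) these agree there with $\omega_X^{[m]}$, whence $\gamma_*\omega_\sX^m\cong\omega_X^{[m]}$ by \cite[Corollary~3.8]{Hassett_Kovacs_Reflexive_pull_backs}, compatibly with multiplication. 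Because $\omega_\sX$ is a line bundle, $\sX\cong\bigl[\Spec_\sX\bigl(\bigoplus_m\omega_\sX^m\bigr)/\bG_m\bigr]$, and the adjoint map of graded $\sO_\sX$-algebras $\gamma^*\bigl(\bigoplus_m\omega_X^{[m]}\bigr)\to\bigoplus_m\omega_\sX^m$ yields, after relative $\Spec$ and the $\bG_m$-quotient, a morphism
\[
\phi\colon\sX\longrightarrow\widetilde{\sX}=\left[\Spec_X\left(\bigoplus_m\omega_X^{[m]}\right)\Big/\bG_m\right],\qquad \widetilde\gamma\circ\phi=\gamma,
\]
where $\widetilde\gamma\colon\widetilde{\sX}\to X$ is the coarse moduli map (Theorem~\ref{thm:abramovich_hassett}).

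\emph{Properties of $\phi$, and the two equivalences.} The stacks $\sX$ and $\widetilde{\sX}$ are proper over $k$ and $\phi$ is separated, so $\phi$ is proper; it is quasi-finite because $\gamma$ is, hence finite. Since $\widetilde\gamma$ is also an isomorphism in codimension one ($X$ being Gorenstein in codimension one), $\phi$ is an isomorphism in codimension one, so the natural map $\sO_{\widetilde{\sX}}\to\phi_*\sO_\sX$ is an isomorphism off a substack of codimension $\ge 2$, hence an isomorphism by Lemma~\ref{lem:stacky_Hassett_Kovacs}; and $R^{i}\phi_*\sO_\sX=0$ for $i>0$ because $\phi$ is finite with fibres that are classifying stacks of finite groups and $\charact k=0$. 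Thus $R\phi_*\sO_\sX\cong\sO_{\widetilde{\sX}}$. (Comparing stabilizers shows $\phi$ is in general neither representable nor an isomorphism: over a point where $\omega_X$ is not locally free its fibre has the shape $B(\mu_a\times\mu_b)\to B\mu_{\lcm(a,b)}$ — which is precisely why the statement is not a triviality.) Now apply the deformation comparison of \cite[Propositions~3.9 and~3.10]{Bhatt_Ho_Patakfalvi_Schnell_Moduli_of_products_of_stable_varieties} to the proper morphism $\phi$: its hypotheses reduce to $R\phi_*\sO_\sX\cong\sO_{\widetilde{\sX}}$, since $R^{1}\phi_*\sO_\sX=0$ kills the obstruction term $\Hom_{\widetilde{\sX}}(\bL_{\widetilde{\sX}},R^{1}\phi_*\sO_\sX)$ and the remaining tangent and obstruction terms for the morphism problem involve only $\phi_*\sO_\sX=\sO_{\widetilde{\sX}}$, hence coincide with the corresponding ones for $\sX$ and for $\widetilde{\sX}$. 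Therefore $\Def(\widetilde{\sX})\leftarrow\Def(\sX\to\widetilde{\sX})\to\Def(\sX)$ are both equivalences; for the functor forgetting the target one even has the explicit quasi-inverse $\sX_A\mapsto\bigl(\sX_A\to[\Spec_{X_A}(\bigoplus_m\gamma_{A,*}\omega_{\sX_A}^m)/\bG_m]\bigr)$, with $X_A$ the flat coarse space of $\sX_A$, so the essential content is the functor forgetting the source.

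\emph{Main obstacle.} The work lies in establishing $R\phi_*\sO_\sX\cong\sO_{\widetilde{\sX}}$ and then in verifying that the cited comparison, stated for fibrations of schemes, transfers verbatim to $\phi$ — a proper, quasi-finite, non-representable morphism between DM-stacks with unbounded cotangent complexes, whose source carries strictly larger stabilizers than its target.
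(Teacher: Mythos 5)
Your construction of $\phi$ is fine and is essentially the paper's (the paper writes $\sX$ as $[\Spec_X\sA/\bG_m^2]$ with $\sA=\bigoplus g^*\omega_Y^{[m_1]}\otimes\omega_{X/Y}^{[m_2]}$ and quotients an equivariant map along $\bG_m^2\to\bG_m$; your tautological $\bG_m$-torsor presentation of $\sX$ via the line bundle $\omega_\sX$ gives the same morphism). The gap is in the deformation-theoretic half, where you misstate the hypotheses of the two cited comparison results. For the functor $\Def(\sX\to\widetilde{\sX})\to\Def(\widetilde{\sX})$, \cite[Proposition 3.9]{Bhatt_Ho_Patakfalvi_Schnell_Moduli_of_products_of_stable_varieties} does \emph{not} ask for $R\phi_*\sO_\sX\cong\sO_{\widetilde{\sX}}$: it asks for an open $U\subseteq\sX$ on which $\phi$ is an isomorphism with $\codim_{\sX}(\sX\setminus U)\geq 3$ \emph{and} $\depth\sO_{\sX,\ox}^{\mathrm{sh}}\geq 3$ at every geometric point of the complement. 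The relevant tangent/obstruction terms are $\Ext^i_{\sX}(\bL_{\sX/\widetilde{\sX}},\sO_\sX)$, which are killed by support-codimension and depth, not by any pushforward identity; ``isomorphism in codimension one'' (i.e.\ codimension $\geq 2$) is strictly too weak. Verifying the hypothesis is real work that uses the fibration structure: at a point $x\in X$ with $\codim_X x\leq 3$ one has $c_1+c_2\leq 3$ for the codimensions in the base and in the fiber, so one of them is $\leq 1$ and hence one of $\omega_{X/Y}$, $g^*\omega_Y$ is invertible at $x$; this gives $\codim\geq 4$ for the bad locus, and the depth bound comes from adding fiber and base depths ($\geq 2+2$) via \cite[Proposition 6.3.1]{Grothendieck_Elements_de_geometrie_algebrique_IV_II}. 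None of this appears in your argument.

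For the other functor, $\Def(\sX\to\widetilde{\sX})\to\Def(\sX)$, \cite[Proposition 3.10]{Bhatt_Ho_Patakfalvi_Schnell_Moduli_of_products_of_stable_varieties} requires, in addition to $\phi_*\sO_\sX\cong\sO_{\widetilde{\sX}}$ and $R^1\phi_*\sO_\sX=0$ (which you do establish, by an argument parallel to the paper's), that $\widetilde{\sX}$ has \emph{no infinitesimal automorphisms}. This hypothesis is not formal and cannot be dropped: the paper proves it in Lemma \ref{lem:infinitesmial_automorphisms}, which ultimately rests on the vanishing $\Hom_X(\bL_X,\sO_X)=0$ of Theorem \ref{thm:vanishing_hom_from_cotangent}. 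Since your proof never invokes any vanishing theorem, it cannot be complete as written. Finally, the ``explicit quasi-inverse'' $\sX_A\mapsto[\Spec_{X_A}(\bigoplus_m\gamma_{A,*}\omega_{\sX_A}^m)/\bG_m]$ presupposes that $\gamma_{A,*}\omega_{\sX_A}^m$ is a flat deformation of $\omega_X^{[m]}$ compatible with base change and that the resulting stack deforms $\widetilde{\sX}$ flatly --- which is essentially the assertion being proved, so it cannot be cited as evidence for it.
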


\begin{proof}
\emph{Step 1: defining $\phi$.} 
 First, we prove that
\begin{equation}
\label{eq:two_index_ones:goal}
\sX \cong \left[ \left. \Spec_{X} \sA \right/ \bG^2_m \right]  \textrm{, where } \sA := \bigoplus_{(m_1,m_2) \in \bZ^2} \left( g^* \omega_{Y}^{[m_1]} \otimes  \omega_{X /Y}^{[m_2]} \right) .
\end{equation}
Let 
\begin{equation*}
 \sB:= \bigoplus_{m \in \bZ} \omega_{X/Y}^{[m]} \textrm{ and }  \sC := \bigoplus_{m \in \bZ} \omega_{Y}^{[m]}. 
\end{equation*}
Then, the following compuation shows \eqref{eq:two_index_ones:goal}.
\begin{eqnarray*}
\sX & = & \left[ (\Spec_{X} \sB) / \bG_m \right] \times_{Y} \sY
\\ & \cong & \left[ (\Spec_{X} \sB) / \bG_m \right] \times_{X} (X \times_{Y} \sY)
\\ & \cong & \left[ (\Spec_{X} \sB) / \bG_m \right] \times_{X} \left[ \left.  (\Spec_{X} g^* \sC) \right/ \bG_m \right]
\\ & \cong & \left[ (\Spec_{X} \sB  \times_{X} \Spec_{X} g^* \sC) / \bG_m^2 \right] 
\\ & \cong & \left[ (\Spec_{X} \sB  \otimes  g^* \sC) / \bG_m^2 \right] 
\\ & \cong & \left[ (\Spec_{X} \sA) / \bG_m^2 \right] 
\end{eqnarray*}
Furthermore by Lemma \ref{lem:relative_dualizing_reflexive}, there is a (graded) embedding
\begin{equation}
\label{eq:two_index_ones:embedding}
\bigoplus_{m \in \bZ} \omega_{X}^{[m]} \hookrightarrow \sA,
\end{equation}
which induces a morphism
\begin{equation}
\label{eq:two_index_ones:before_quotient}
\Spec_{X} \sA \to \Spec_{X} \left(  \bigoplus_{m \in \bZ} \omega_{X}^{[m]}  \right).
\end{equation}
Furthermore by the grading of \eqref{eq:two_index_ones:embedding}, \eqref{eq:two_index_ones:before_quotient} is equivariant with respect to the $2$-times multiplication map $\xi \colon \bG_m^2 \to \bG_m$. Quotienting  then out with $\bG_m^2$ and $\bG_m$ on the two sides of \eqref{eq:two_index_ones:before_quotient} yields the morphism $\phi \colon \sX \to \widetilde{\sX}$.

\emph{Step 2: $\Def(  \sX \to \widetilde{\sX} ) \to \Def(\widetilde{\sX})$ is an equivalence.} We use \cite[Proposition 3.9]{Bhatt_Ho_Patakfalvi_Schnell_Moduli_of_products_of_stable_varieties}. That is, we have to exhibit an open set $U \subseteq \sX$, such that $\phi|_U$ is an isomorphism, $\codim_{\sX} \sX \setminus U \geq 3$ and $\depth \sO_{\sX, \oy}^{\mathrm{sh}} \geq 3$ for every geometric point point $\oy \in \sX \setminus U$ (where the upper index $\mathrm{sh}$ denotes the \'etale local ring, opposite to the usual Zariski one, the notation coming from ``strict Henselization'', the algebraic operation with wich one can obtain the \'etale local ring from the Zariski local ring in case of a scheme). 

Consider now any (not necessarily closed) point $x \in X$. Set $y:=g(x)$, $c_1 := \codim_{Y} y$, $c_2:= \codim_{X_y} x$ and $c:=\codim_{X} x$. Note that $c_1 + c_2 = c$. Further, note that if $c \leq 3$, then at most one of $c_j$ can be bigger than 1, and hence $x$ (resp. $y$) is a relatively Gorenstein point over $Y$ (resp. $\Spec k$). Therefore $\omega_{X/Y}$ or $g^* \omega_Y$ is a line bundle at $x$. Let $W$ be the locus of points  $x \in X$ where $\omega_{X/Y}$ or $g^* \omega_Y$ is locally free. By the above discussion $\codim_{X} X \setminus W \geq 4$. Define then $U$ and $V$ be the inverse image of $W$ in $\sX$ and in $\osX$, respectively. In particular then $\phi^{-1}(V)=U$.

First,  $\phi|_{U}$ is an isomorphism, because  after choosing an $R \in W$ one of the following two cases holds:
\begin{enumerate}
 \item If $ \omega_{X/Y}$ is locally free at $R$, then there is an open neighborhood $T$ of $R$ such that $\omega_{X/Y}|_T \cong \sO_T$. Hence, $ \left. \bigoplus_{m \in \bZ} \omega_{X}^{[m]} \right|_T \cong  \left. \bigoplus_{m \in \bZ} g^* \omega_{Y}^{[m]}\right|_T$ and 
 $\sA|_T \cong  \left. \bigoplus_{m \in \bZ}  g^* \omega_{Y}^{[m]} \right|_T[x,x^{-1}] $. Hence over $V$,  $\Spec \sA \to \Spec \left(\bigoplus_{m \in \bZ}  \omega_{X}^{[m]}  \right)  $ is a $\bG_m$-bundle. Further the restriction of the natural $\bG_m \times \bG_m$ action on $\Spec \sA$ to the kernel of the multiplication map $\bG_m \times \bG_m \to \bG_m$ acts freely and transitively on the fibers over $V$. Therefore, the map $\phi : \sX \to \widetilde{\sX}$ obtained by quotienting out $\Spec \sA \to \Spec \left(\bigoplus_{m \in \bZ}  \omega_{X}^{[m]}  \right) $ (by $\bG_m \times \bG_m$ on the source and by $\bG_m$ on the target side) is an isomorphism over $V$.

\item If $g^* \omega_Y$ is locally free at $R$, the argument is completely the same only the roles of $\omega_{X/Y}$ and $g^* \omega_Y$ are exchanged.
\end{enumerate}

Second, we have to prove that $\depth \sO_{\sX, \ox}^{\mathrm{sh}} \geq 3$  for every geometric point $\ox \in \sX \setminus U$. So, fix any such $\ox$. Since $\ox \not\in U$, $c_1, c_2 \geq 2$ . Consequently  $\depth \sO_{(\sX)_{\oy}, \ox}^{\mathrm{sh}} \geq 2$ and $\depth \sO_{\sY, \oy}^{\mathrm{sh}} \geq 2$. However, then by  \cite[Proposition 6.3.1]{Grothendieck_Elements_de_geometrie_algebrique_IV_II}, $\sO_{\sX,\ox}^{\mathrm{sh}} \geq 4$.

\emph{Step 3: $\Def(  \sX \to \widetilde{\sX} ) \to \Def(\sX)$ is an equivalence.} We use \cite[Proposition 3.10]{Bhatt_Ho_Patakfalvi_Schnell_Moduli_of_products_of_stable_varieties}. That is, we have to show that $\widetilde{\sX}$ has no infinitesimal automorphisms,  $(\phi)_* \sO_{\sX} \cong \sO_{\widetilde{\sX}}$ and that $R^1 (\phi)_* \sO_{\sX} = 0$. The first condition is is shown in Lemma \ref{lem:infinitesmial_automorphisms}. For the other two, 
consider the following diagram (recall $\xi$ is the multiplication map $\bG_m^2 \to \bG_m$).
\begin{equation*}
\xymatrix{
Q:= \Spec_{X} \sA \ar[d]^g \ar@/_5pc/[dd]_q \ar[dr]^{\rho}
\\
R:= [Q/\Ker \xi ] \ar[d]^h \ar[r]_-{\zeta} 
&
P:= \Spec_{X} \left(  \bigoplus_{m \in \bZ} \omega_{X}^{[m]}  \right) \ar[d]^p
\\
\sX = [Q/\bG_m^2]=[R/\bG_m] \ar[r]^{\phi} 
&
\widetilde{\sX} = [P / \bG_m]
}
\end{equation*}
From the definition of $P$ it follows that $P$ is isomorphic to the scheme theoretic quotient $Q/\Ker \xi$. Therefore, $\zeta$ is a coarse  moduli map. However, then 
\begin{equation*}
\sO_{\widetilde{\sX}} \cong (p_* \sO_P)^{\bG_m} 
\cong 
\underbrace{(p_*  \zeta_* \sO_R)^{\bG_m}}_{\textrm{$\zeta$ is a coarse moduli map}}
\cong
(\phi_* h_* \sO_R)^{\bG_m} = \phi_* \sO_{\sX} 
\end{equation*}
and
\begin{equation*}
 R^1 \phi_* \sO_{\sX} \hookrightarrow R^1 \phi_* h_* \sO_R 
\cong
\underbrace{R^1(\phi \circ h)_* \sO_R}_{\textrm{$h$ is affine}}
\cong
R^1(p \circ \zeta)_* \sO_R
\cong
\underbrace{R^1 p_*   \sO_P}_{ \parbox{90pt}{\tiny $\zeta$ is a coarse moduli map and hence $R \zeta_* \sO_R \cong \sO_P$}}
= \underbrace{0 }_{\textrm{$p$ is affine}}
\end{equation*}
This concludes our proof.
\end{proof}


\begin{lem}
\label{lem:infinitesmial_automorphisms}
Given a fibration of stable varieties $(X \to Y \to \Spec k)$,  let $(\sX \to \sY \to \Spec k)$ be the index-one cover  as in Theorem \ref{thm:tower_equivalence} and  $\widetilde{\sX}$  the index-one cover of $X$ as in Theorem \ref{thm:abramovich_hassett}. Then neither $\sX$, nor $\widetilde{\sX}$ has  infinitesimal automorphisms.
\end{lem}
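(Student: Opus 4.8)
The plan is to reduce both assertions to the single vanishing $\Hom_X(\bL_X,\sO_X)=0$, which Theorem \ref{thm:vanishing_hom_from_cotangent} gives us: indeed $X$ is a stable variety by Lemma \ref{lem:forgetful_yields_stable_family}, and $\sO_X$ is anti-nef. Write $\sZ$ for either $\sX$ or $\widetilde{\sX}$. In both cases $\sZ$ carries a coarse moduli morphism $\gamma\colon\sZ\to X$ — for $\widetilde{\sX}$ this is Theorem \ref{thm:abramovich_hassett} (or Definition \ref{defn:index_one_stack}), for $\sX$ it is Lemma \ref{lem:pi_is_a_coarse_moduli map} — and $\sZ$ is $S_2$ with $\sO_{\sZ}$ an $S_2$-sheaf (for $\widetilde{\sX}$ because it is a stable stack, for $\sX$ by Lemma \ref{lem:tower_of_stable_stacks_relative_canonical_relative_S_2} taken with $j=0$). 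First I would record that the infinitesimal automorphisms of $\sZ$ are $\Hom_{D(\sZ)}(\bL_{\sZ},\sO_{\sZ})$, and that, since $\bL_{\sZ}$ is supported in non-positive cohomological degrees with $h^0(\bL_{\sZ})=\Omega_{\sZ}$, the local-to-global spectral sequence degenerates in degree $0$ to give $\Hom_{D(\sZ)}(\bL_{\sZ},\sO_{\sZ})=H^0(\sZ,\sHom_{\sZ}(\Omega_{\sZ},\sO_{\sZ}))$; likewise $\Hom_X(\bL_X,\sO_X)=H^0(X,\sHom_X(\Omega_X,\sO_X))$. So it suffices to prove $H^0(\sZ,\sHom_{\sZ}(\Omega_{\sZ},\sO_{\sZ}))=0$.

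Next I would produce an open $U\subseteq X$ over which $\gamma$ is an isomorphism and with $\codim_X(X\setminus U)\ge 2$. For $\widetilde{\sX}$ this is part of the definition of a stable stack (the coarse moduli map is an isomorphism in codimension one). For $\sX=\bigl[\Spec_X\bigl(\bigoplus_m\omega_{X/Y}^{[m]}\bigr)/\bG_m\bigr]\times_Y\sY$, I would take $U$ to be the locus of $x\in X$ at which $\omega_{X/Y}$ is locally free and at which $\sY\to Y$ is an isomorphism over $g(x)$. Running the codimension bookkeeping exactly as in the proof of Proposition \ref{prop:two_index_ones} — if $\codim_X x\le 1$ then at most one of $\codim_Y g(x)$, $\codim_{X_{g(x)}}x$ can exceed $1$, so $x$ is relatively Gorenstein over $Y$ and $g(x)$ is Gorenstein — shows $\codim_X(X\setminus U)\ge 2$, and over $U$ both factors of the fibre product are trivialized, so $\gamma$ restricts to an isomorphism $\gamma^{-1}(U)\xrightarrow{\ \sim\ }U$.

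Finally I would transport the computation to $X$. The sheaf $\sHom_{\sZ}(\Omega_{\sZ},\sO_{\sZ})$ is $S_2$, being a dual into the $S_2$-sheaf $\sO_{\sZ}$ (checked on étale charts, where it is a kernel of a map between finite direct sums of the structure sheaf); hence $\gamma_*\sHom_{\sZ}(\Omega_{\sZ},\sO_{\sZ})$ is $S_2$ by Lemma \ref{lem:stack_coarse_pushforward}.\ref{itm:stack_coarse_pushforward:S_r}, and similarly $\sHom_X(\Omega_X,\sO_X)$ is $S_2$. The canonical map $\gamma^*\Omega_X\to\Omega_{\sZ}$ induces, via $\sHom$–$\otimes$ adjunction and $\gamma_*\sO_{\sZ}\cong\sO_X$, a homomorphism $\gamma_*\sHom_{\sZ}(\Omega_{\sZ},\sO_{\sZ})\to\sHom_X(\Omega_X,\sO_X)$ which is an isomorphism over $U$; two $S_2$-sheaves that agree over the complement of a codimension $\ge 2$ closed set are isomorphic (Lemma \ref{lem:stacky_Hassett_Kovacs}, or \cite[Corollary 3.8]{Hassett_Kovacs_Reflexive_pull_backs}). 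Therefore
\[
H^0(\sZ,\sHom_{\sZ}(\Omega_{\sZ},\sO_{\sZ}))=H^0(X,\gamma_*\sHom_{\sZ}(\Omega_{\sZ},\sO_{\sZ}))=H^0(X,\sHom_X(\Omega_X,\sO_X))=\Hom_X(\bL_X,\sO_X)=0,
\]
which finishes both cases. The main obstacle I anticipate is purely the codimension bookkeeping for $\sX$ (combining the relative Gorenstein locus of $g$ with the Gorenstein locus of $Y$, as in Proposition \ref{prop:two_index_ones}) together with the verification that $\sHom_{\sZ}(\Omega_{\sZ},\sO_{\sZ})$ is $S_2$; once those are in place the statement is a formal consequence of Theorem \ref{thm:vanishing_hom_from_cotangent} and the coarse-moduli formalism already developed.
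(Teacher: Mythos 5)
Your proof is correct and shares the paper's overall strategy: identify the infinitesimal automorphisms of $\sZ$ with $\Hom_{\sZ}(\bL_{\sZ},\sO_{\sZ})$, use that the coarse moduli map $\alpha\colon\sZ\to X$ is an isomorphism in codimension one (your codimension bookkeeping for $\sX$ is the same as in Step~2 of the proof of Proposition \ref{prop:two_index_ones}; note that when $\codim_X x\le 1$ in fact \emph{neither} of $c_1,c_2$ can exceed $1$, which is exactly what you need to get both local freeness conditions at once), and reduce everything to the vanishing $\Hom_X(\bL_X,\sO_X)=0$ of Theorem \ref{thm:vanishing_hom_from_cotangent}. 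Where you diverge is the transport step. The paper stays in the derived category: it applies $\Hom_{\sZ}(\_,\sO_{\sZ})$ to the exact triangle of cotangent complexes for $\alpha$, kills $\Hom_{\sZ}(\bL_{\sZ/X},\sO_{\sZ})$ using that $\bL_{\sZ/X}$ is concentrated in negative degrees except for a sheaf supported in codimension at least two, and identifies $\Hom_{\sZ}(L\alpha^*\bL_X,\sO_{\sZ})$ with $\Hom_X(\bL_X,R\alpha_*\sO_{\sZ})\cong\Hom_X(\bL_X,\sO_X)$ by adjunction. You instead work underived with tangent sheaves, proving the sheaf isomorphism $\gamma_*\sHom_{\sZ}(\Omega_{\sZ},\sO_{\sZ})\cong\sHom_X(\Omega_X,\sO_X)$ by $S_2$-extension across the codimension-two locus. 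Your route costs the extra verifications that $\sHom_{\sZ}(\Omega_{\sZ},\sO_{\sZ})$ is $S_2$ and that its pushforward remains $S_2$ (Lemma \ref{lem:stack_coarse_pushforward} and Lemma \ref{lem:stacky_Hassett_Kovacs}), which the paper's adjunction argument gets for free; in exchange you avoid the cotangent-complex triangle entirely and prove slightly more than needed (an identification of tangent sheaves rather than merely an injection on global vector fields, which would already suffice to conclude). Both arguments are sound.
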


\begin{proof}
First,  note that if $\phi \colon \sX \to \widetilde{\sX}$ is the morphism constructed in  Proposition \ref{prop:two_index_ones}, then $\phi$ factors the coarse moduli map $\gamma\colon \sX \to X$ and by the proof of Proposition \ref{prop:two_index_ones}, $\phi_* \sO_{\sX} \cong \sO_{\widetilde{\sX}}$. Therefore, it follows that the induced morphism $\widetilde{\sX} \to X$ is also a coarse moduli map. Furthermore, since $\phi$ is isomorphism over the locus $U \subseteq X$ where either $\omega_{X/Y}$ or $g^* \omega_Y$ is a line bundle, so is the morphism $\widetilde{\sX} \to X$. Hence, it is enough to prove that a DM-stack $\sZ$ with a proper coarse moduli map $\alpha \colon \sZ \to X$ which is an isomorphism over $U$ has no infinitesimal automorphisms. This will imply the statement for both $\sX$ and $\widetilde{\sX}$.

By Theorem \ref{thm:vanishing_hom_from_cotangent}, $X$ has no infinitesimal automorphism. To deduce, the same for $\sZ$, note that the map $\alpha \colon \sZ \to X$ is an isomorphism in codimension one. Hence, $\bL_{\sZ/X}$ is supported in a closed set of codimension at least two. Consider now the exact triangle 
\begin{equation*}
\xymatrix{
\tau_{\leq -1} \bL_{\sZ/X} \ar[r] & \bL_{\sZ/X} \ar[r] & \Omega_{\sZ/X} \ar[r]^{+1} & 
},
\end{equation*}
where $\tau_{\leq -1} \bL_{\sZ/X}$ is supported only in cohomological degrees smaller than zero. In particular then $\Hom_{\sZ} ( \tau_{\leq -1} \bL_{\sZ/X}  , \sO_{\sZ})=0$. Furthermore, $\Hom_{\sZ} (  \Omega_{\sZ/X} , \sO_{\sZ})=0$ because $\Omega_{\sZ/X}$ is a sheaf supported on a closed set of codimension at least two. Hence by $\Hom_{\sZ} (  \_ , \sO_{\sZ})$ applied to the above exact triangle we obtain that, $\Hom_{\sZ} (  \bL_{\sZ/X} , \sO_{\sZ})=0$. Applying now $\Hom_{\sZ} ( \_, \sO_{\sZ})$ to  the usual exact sequence of cotangent complexes associated to $\alpha$ yields the exact sequence
\begin{equation*}
\xymatrix{
\Hom_{\sZ} (  \bL_{\sZ/X} , \sO_{\sZ}) \ar[r] &  \Hom_{\sZ} ( \bL_{\sZ} , \sO_{\sZ}) \ar[r] & \Hom_{\sZ} ( L \alpha^* \bL_{X} , \sO_{\sZ}) .
}
\end{equation*}
We have just shown that  the left term is zero. Furthermore, the right term,  is zero as well, because
\begin{equation*}
 \Hom_{\sZ} ( L \alpha^* \bL_{X} , \sO_{\sZ}) 
\cong 
\underbrace{\Hom_{X} (  \bL_{X} , R\alpha_* \sO_{\sZ})}_{\textrm{by adjunction}}
\cong
\underbrace{\Hom_{X} (  \bL_{X} , \sO_{X})}_{\textrm{$\alpha$ is a coarse moduli map}}
=
\underbrace{0}_{ \parbox{70pt}{\tiny $X$ has no infinitesimal automorphisms}} .
\end{equation*}
This finishes our proof.

\end{proof}

\begin{lem} 
\label{lem:unique_extension}
Let $(\sX \to \sY \to \Spec A)$ be a fibration of stable stacks over an Artinian local algebra $A$ over $k$ such that,
\begin{equation*}
\Hom_{\sY}(\Omega_{\sY}, R^1 (\tg)_* \sO_{\sX})=0 . 
\end{equation*}
Let $A'$ be a small extension of $A$ and $\iota  \colon \sX \hookrightarrow \sX'$ a flat extension over $A'$. Then there is a unique (up to isomorphism)  extension $j \colon \sY \hookrightarrow \sY'$ and  an $A'$ morphism $\tg' \colon \sX' \to \sY'$, for which $\tg' \circ \iota = j \circ \tg$. 
\end{lem}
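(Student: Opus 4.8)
The plan is to reduce to a single small extension and then to run the standard comparison between the deformation theory of the fibration $\tg\colon\sX\to\sY$ and that of its source $\sX$, in the spirit of \cite[Propositions 3.9 and 3.10]{Bhatt_Ho_Patakfalvi_Schnell_Moduli_of_products_of_stable_varieties} and \cite[Theorems 8.1 and 8.2]{Horikawa_On_deformations_of_holomorphic_maps_III}, carrying the higher direct image $R^1\tg_*\sO_\sX$ along and discarding it only at the end via the hypothesis. So write $A'$ as a small extension $0\to J\to A'\to A\to 0$ with $J$ a finite dimensional $k$-vector space and $\maxid_{A'}J=0$; it suffices to produce a unique $(j,\tg')$ in this case. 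Let $\sX_0\to\sY_0$ be the central fibre, and set $\sI:=\ker(\sO_{\sX'}\to\sO_\sX)$, so $\sI\cong\sO_{\sX_0}\otimes_k J$ by flatness of $\sX'$ over $A'$. Since the fibres of $\tg$ are stable varieties, hence geometrically connected and reduced, cohomology and base change for DM-stacks \cite[Theorem A]{Hall_Cohomology_and_base_change_for_algebraic_stacks} gives $\tg_*\sO_\sX\cong\sO_\sY$, and $R^1\tg_*\sI$ is the sheaf $(R^1\tg_*\sO_\sX)\otimes_k J$, supported on $\sY_0$.

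The heart of the matter is the computation of $\Ext^i_\sX(L\tg^*\bL_{\sY/A},\sI)$ for $i=0,1$. By adjunction these are $\Ext^i_\sY(\bL_{\sY/A},R\tg_*\sI)$; applying $\mathbf R\Hom_\sY(\bL_{\sY/A},-)$ to the truncation triangle $\sO_\sY\otimes_k J\to R\tg_*\sI\to\tau_{\geq 1}R\tg_*\sI\xrightarrow{+1}$ (which uses $\tg_*\sO_\sX\cong\sO_\sY$), and using that $\bL_{\sY/A}$ is concentrated in non-positive degrees while $\tau_{\geq1}R\tg_*\sI$ is concentrated in degrees $\geq1$ with $h^1=(R^1\tg_*\sO_\sX)\otimes_k J$, one gets
\begin{equation*}
\Hom_\sX(L\tg^*\bL_{\sY/A},\sI)\cong\Hom_\sY(\Omega_\sY,\sO_\sY)\otimes_k J
\end{equation*}
outright, while in degree $1$ the only surviving correction is $\Hom_\sY\bigl(\Omega_\sY,(R^1\tg_*\sO_\sX)\otimes_k J\bigr)$ — precisely the group that the hypothesis $\Hom_\sY(\Omega_\sY,R^1\tg_*\sO_\sX)=0$ kills — so that
\begin{equation*}
\Ext^1_\sX(L\tg^*\bL_{\sY/A},\sI)\cong\Ext^1_\sY(\bL_{\sY/A},\sO_\sY)\otimes_k J .
\end{equation*}
Thus the higher direct images of $\sO_\sX$ contribute nothing, and the groups controlling lifts of $\tg$ become exactly the infinitesimal automorphisms, respectively the first-order deformations, of the target $\sY$ (tensored with $J$); this is the one and only place the hypothesis enters.

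With the two isomorphisms in hand I would conclude by the deformation theory of a morphism with varying target (the cotangent-complex formalism, cf.\ \cite{stacks-project}; see also \cite[Theorems 8.1 and 8.2]{Horikawa_On_deformations_of_holomorphic_maps_III} and the stack arguments behind \cite[Propositions 3.9 and 3.10]{Bhatt_Ho_Patakfalvi_Schnell_Moduli_of_products_of_stable_varieties}). The content is: for a flat deformation $\sY''$ of $\sY$ over $A'$, the obstruction to the existence of an $A'$-morphism $\tg'\colon\sX'\to\sY''$ lifting $\tg$ lies in $\Ext^1_\sX(L\tg^*\bL_{\sY/A},\sI)$ and depends affine-linearly on the class of $\sY''$ in $\Ext^1_\sY(\bL_{\sY/A},\sO_\sY)\otimes_k J$, with linear part the comparison map $\Ext^1_\sY(\bL_{\sY/A},\sO_\sY)\otimes_k J\to\Ext^1_\sX(L\tg^*\bL_{\sY/A},\sI)$. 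Since this linear part is an isomorphism by the second paragraph, there is exactly one deformation $\sY'$ of $\sY$ (up to isomorphism) for which the obstruction vanishes and a lift $\tg'$ exists; its existence follows because $\sX'$ is given (the obstruction to integrating the first-order deformation of $\sY$ induced by $\sX'$ maps to the obstruction to $\sX'$, which vanishes). Given any other lift $(\sY'',\tg'')$, its target has the same deformation class and so $\sY''\cong\sY'$ over $\sY$; and once $\sY'$ is fixed the lifts $\tg'$ form a torsor under $\Hom_\sX(L\tg^*\bL_{\sY/A},\sI)\cong\Hom_\sY(\Omega_\sY,\sO_\sY)\otimes_k J$, which is the group of infinitesimal automorphisms of $\sY'$ over $\sY$ acting by post-composition. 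Hence $(j\colon\sY\hookrightarrow\sY',\tg')$ is unique up to isomorphism, $\tg'\circ\iota=j\circ\tg$ holds by construction, and $\sY'$ is again a family of stable stacks, being a flat deformation of $\sY$ over an Artinian ring \cite{Abramovich_Hassett_Stable_varieties_with_a_twist}.

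The step I expect to be the main obstacle is making the second and third paragraphs fully rigorous for the (non-lci) stable stacks at hand: the careful handling of the cotangent-complex truncations and of cohomology and base change for $R^\bullet\tg_*$ of the sheaf $\sI$, the harmless but necessary passage between $\sY$ and its central fibre $\sY_0$ (so that the hypothesis applies where it is needed), and the verification that the lifting obstruction is genuinely affine in $\sY''$ with linear part the comparison map. All of this is standard over schemes; the only genuinely new ingredient is the vanishing $\Hom_\sY(\Omega_\sY,R^1\tg_*\sO_\sX)=0$, which is exactly what would otherwise both obstruct the existence and destroy the uniqueness of $\sY'$.
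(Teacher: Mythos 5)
Your argument is correct in substance, but it takes a visibly different route from the paper's: the paper's entire proof is two sentences, noting that $\sX$ has no infinitesimal automorphisms (Lemma \ref{lem:infinitesmial_automorphisms}) and then invoking \cite[Proposition 3.10]{Bhatt_Ho_Patakfalvi_Schnell_Moduli_of_products_of_stable_varieties} as a black box whose hypotheses are precisely $\tg_*\sO_{\sX}\cong\sO_{\sY}$ and the assumed vanishing. What you have written is in effect a self-contained proof of the relevant case of that proposition: the adjunction $\Ext^i_{\sX}(L\tg^*\bL_{\sY/A},\sI)\cong\Ext^i_{\sY}(\bL_{\sY/A},R\tg_*\sI)$, the truncation triangle isolating $\Hom_{\sY}(\Omega_{\sY},R^1\tg_*\sI)$ as the only correction term, and the affine-linearity of the lifting obstruction in the class of $\sY''$. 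Two points of comparison are worth recording. First, you never use that $\sX$ has no infinitesimal automorphisms; you instead get uniqueness of $\tg'$ for fixed $\sY'$ from the degree-zero isomorphism, which identifies the torsor of lifts with the group of infinitesimal automorphisms of $\sY'$ over $\sY$ acting by post-composition --- a legitimate and arguably cleaner way to obtain the ``up to isomorphism'' statement, at the cost of redoing work the paper delegates to the citation. Second, the two soft spots you flag are real but repairable within your framework: the hypothesis is stated for $R^1\tg_*\sO_{\sX}$ over $A$ but is applied on the central fibre, which is harmless because $R^1\tg_*\sO_{\sX}$ is locally free and compatible with base change by \cite[Theorem 7.8]{Kollar_Kovacs_Log_canonical_singularities_are_Du_Bois}; and the nonemptiness of the set of deformations of $\sY$ over $A'$ --- without which the affine-linearity argument produces no $\sY'$ at all --- does follow from the existence of $\sX'$, once you observe that the comparison map on $\Ext^2$ is injective, its kernel being controlled by the same group $\Hom_{\sY}(\Omega_{\sY},R^1\tg_*\sI)=0$. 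With those two points made explicit your argument is complete, and it makes visible exactly where the hypothesis enters, which the paper's citation hides.
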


\begin{proof}
First, note that $\sX$ has no infinitesimal automorphisms by Lemma \ref{lem:infinitesmial_automorphisms}. Hence, \cite[Proposition 3.10]{Bhatt_Ho_Patakfalvi_Schnell_Moduli_of_products_of_stable_varieties} implies the unique existence of $\tg'$ and $\sY'$. 
\end{proof}

\begin{prop}
\label{prop:equivalence}
Let  $(\sX \to \sY \to \Spec k)$ be a fibration of stable stacks. Then, the natural forgetful map $\phi \colon \Def(\tg \colon \sX \to \sY) \to \Def(\sX)$ is an equivalence if 
$\Hom_{\sY}(\Omega_{\sY}, R^1 \tg_* \sO_{\sX})=0$. 
\end{prop}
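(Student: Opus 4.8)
The plan is to prove that $\phi$ is an equivalence by verifying the usual infinitesimal criterion for a morphism of deformation categories fibered in groupoids over $\Art_k$. Concretely, it suffices to check that: (a) for every small extension $A' \twoheadrightarrow A$ in $\Art_k$, every object $\xi = (\sX_A \to \sY_A \to \Spec A)$ of $\Def(\tg)(A)$, and every extension $\eta'$ of $\phi(\xi)$ to a flat deformation $\sX_{A'}$ of $\sX$ over $A'$, there is an object $\xi'$ of $\Def(\tg)(A')$ restricting to $\xi$ with $\phi(\xi')\cong\eta'$, unique up to a unique isomorphism; and (b) $\phi$ is bijective on infinitesimal automorphisms. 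Granting (a) and (b), an induction on the length of $A$ along a chain of small extensions $k=A_0\twoheadleftarrow\cdots\twoheadleftarrow A_n=A$ upgrades them to the statement that $\phi(A)$ is an equivalence of groupoids for every $A$, hence that $\phi$ is an equivalence.

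For (b) I would first record that the fibration $\tg\colon\sX\to\sY$ has no infinitesimal automorphisms: by Lemma \ref{lem:infinitesmial_automorphisms}, $\sX$ has none; the same argument (or Theorem \ref{thm:abramovich_hassett} together with Theorem \ref{thm:vanishing_hom_from_cotangent} applied to the coarse stable variety $Y$) shows $\sY$ has none; and an infinitesimal automorphism of the fibration induces a pair of such, hence is trivial. So $\phi$ is trivially a bijection on infinitesimal automorphisms, and the base case $\Def(\tg)(k)\to\Def(\sX)(k)$ of the induction is an equivalence. For (a), the flat closed immersion $\sX_A\hookrightarrow\sX_{A'}$ furnished by $\eta'$ is exactly the input of Lemma \ref{lem:unique_extension}, which then produces a unique extension $\sY_A\hookrightarrow\sY_{A'}$ and a unique $A'$-morphism $\tg'\colon\sX_{A'}\to\sY_{A'}$ with $\tg'\circ\iota=j\circ\tg$; thus $\xi':=(\sX_{A'}\to\sY_{A'}\to\Spec A')$ is the required lift, unique up to unique isomorphism, with $\phi(\xi')=\eta'$. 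The one point to check before Lemma \ref{lem:unique_extension} applies at the stage $A_{i-1}\twoheadleftarrow A_i$ is its hypothesis $\Hom_{\sY_{A_{i-1}}}(\Omega_{\sY_{A_{i-1}}},R^1\tg_*\sO_{\sX_{A_{i-1}}})=0$, which must be deduced from the hypothesis $\Hom_{\sY}(\Omega_\sY,R^1\tg_*\sO_\sX)=0$ of the Proposition. I would do this by dévissage: along a small extension the structure sheaf of the total space sits in $0\to\sO_\sX\otimes_k J\to\sO_{\sX_{A_i}}\to\sO_{\sX_{A_{i-1}}}\to0$ with $J\cong k$, so $R^1\tg_*\sO_{\sX_{A_i}}$ is a successive extension of subsheaves of copies of $R^1\tg_*\sO_\sX$; since $\Omega_{\sY_{A_i}}$ is generated over $\sO_{\sY_{A_i}}$ by the pullback of $\Omega_\sY$ and all sheaves involved are supported on the central fibre, the vanishing propagates. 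The same exact sequence exhibits the torsor of liftings of the morphism, and its obstruction, as governed by $\Hom_\sY(\Omega_\sY,R^1\tg_*\sO_\sX)\otimes_k J$, which is the Horikawa-type mechanism of \cite{Horikawa_On_deformations_of_holomorphic_maps_III} underlying \cite[Proposition 3.10]{Bhatt_Ho_Patakfalvi_Schnell_Moduli_of_products_of_stable_varieties} and hence Lemma \ref{lem:unique_extension}.

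Essential surjectivity of $\phi(A)$ then follows by applying the lifting step successively along $k=A_0\twoheadleftarrow\cdots\twoheadleftarrow A_n=A$. For full faithfulness, given objects $\xi_1,\xi_2$ of $\Def(\tg)(A)$ and an isomorphism $\beta\colon\phi(\xi_1)\xrightarrow{\sim}\phi(\xi_2)$ of the underlying deformations of $\sX$, I would lift $\beta$ to an isomorphism $\xi_1\xrightarrow{\sim}\xi_2$ by induction on the length of $A$: over $k$ both objects restrict to the trivialization and the induced isomorphism of the $\sY$-levels is the identity; at each small-extension stage the uniqueness-up-to-unique-isomorphism clause of Lemma \ref{lem:unique_extension} (together with the fact that $\tg_1$ is a proper surjection with $\tg_{1*}\sO_{\sX_1}=\sO_{\sY_1}$, so the isomorphism of the next $\sY$-levels is forced once the source is identified) produces a unique compatible isomorphism, giving existence and uniqueness of the lift. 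Hence $\phi$ is fully faithful, and combined with essential surjectivity and the induction on small extensions, $\phi$ is an equivalence.

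The substantive step is the lifting in (a): one must pin down the obstruction to deforming $\tg$ — with source \emph{and} target allowed to move — over a prescribed deformation of the source, identifying it with a class in $\Hom_\sY(\Omega_\sY,R^1\tg_*\sO_\sX)\otimes_k J$ so that the single cohomological hypothesis over $k$ is exactly what is needed, and then arrange the dévissage so Lemma \ref{lem:unique_extension} applies inductively along the chosen chain of small extensions. Everything else is formal manipulation of deformation categories.
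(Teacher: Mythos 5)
Your proof follows the same route as the paper's: induction on the length of the Artinian ring, invoking Lemma \ref{lem:unique_extension} at each small-extension step and using the absence of infinitesimal automorphisms (Lemma \ref{lem:infinitesmial_automorphisms}) to control isomorphisms. You are in fact more explicit than the paper on two points it leaves implicit --- transferring the vanishing hypothesis from the central fibre to the fibration over $A$ (for the d\'evissage one should use that $R^1\tg_*\sO_{\sX}$ commutes with base change, so the graded pieces of the $\mathfrak{m}$-adic filtration are genuine copies of $R^1\tg_*\sO_{\sX_k}$ rather than possibly proper quotients) and full faithfulness.
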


\begin{proof}
Denote by  $\Art_{k, \leq l}$ and $\Art_{k, l}$ the category of Artinian local $k$-algebras $A$, such that $\dim_k A \leq l$ or $\dim_k A = l$, respectively. We prove by induction on $l$, that $\phi|_{\Art_{k,\leq l}}$ is an equivalence. The claim is vacuous for $l=1$. Hence we may assume that it is known for $l$ replaced by $l-1$. Choose any $A' \in \Art_{k,l}$. We may find a $A \in \Art_{k,l-1}$, such that $A'$ is a small extension of $A$. Choose now any $\sX' \in \Def(\sX)(A')$. We have to prove that there is a unique isomorphism class of $\Def(\sX \to \sY)$ mapping to $\sX'$. However, by our inductional hypothesis, this is known already for $\sX'_A \in \Def(\sX)(A)$. Then, Lemma \ref{lem:unique_extension} concludes our proof.
\end{proof}

\begin{prop}
\label{prop:conclusion}
The statement of Theorem \ref{thm:main} holds, i.e., the forgetful morphism $F \colon \fF \fM_{(h_1,h_2),h} \to \fM_{h}$ is \'etale, if $\Hom_{\sY}(\Omega_{\sY}, R^1 \tg_* \sO_{\sX})=0$ for every admissible fibration $( \sX \to \sY \to \Spec k)$ of stable stacks.
\end{prop}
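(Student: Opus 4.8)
The plan is to deduce étaleness of $F$ from the chain of equivalences of deformation functors assembled in this section, with the assumed vanishing supplying the one link not yet available, namely Proposition \ref{prop:equivalence}. First I would record that $F$ is a morphism of DM-stacks locally of finite type over $k$ (the source by Proposition \ref{prop:stack_isomorphism}, the target $\ofM_h$ by Notation \ref{notation:stable_moduli_space}), hence locally of finite presentation; so it is \'etale if and only if it is formally \'etale, which may be checked via the infinitesimal lifting criterion over Artinian local $k$-algebras. Thus it suffices to show that for every $k$-point $\uX_0 = (X \to Y \to \Spec k) \in \fF\fM_{\um}(k)$ the forgetful functor $\Def_{\bQ}(\uX_0) \to \Def_{\bQ}(X)$ induced by $F$ is an equivalence of deformation categories, and that the objects occurring carry no infinitesimal automorphisms, so that these categories are discrete and the comparison is automatically $2$-compatible.

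To prove the equivalence I would string together the following, where $\usX_0 = (\sX \to \sY \to \Spec k)$ denotes the index-one cover of $\uX_0$ (Definition \ref{defn:index_one_tower}), an admissible fibration of stable stacks by Lemma \ref{lem:index_one_tower_is_a_tower_of_stable_stacks}, and $\widetilde{\sX}$ denotes the index-one covering stack of $X$ (Definition \ref{defn:index_one_stack}): (i) $\Def(\usX_0) \simeq \Def_{\bQ}(\uX_0)$ by the equivalence of categories of admissible fibrations of stable stacks and of fibrations of stable varieties of Theorem \ref{thm:tower_equivalence}, where Lemma \ref{lem:admissible_open} is used to see that every Artinian deformation of $\usX_0$ remains admissible; (ii) $\Def(\usX_0) \simeq \Def(\tg \colon \sX \to \sY)$, since the base is a point so a deformation of the fibration is the same datum as a deformation of the morphism $\tg$; (iii) $\Def(\tg \colon \sX \to \sY) \simeq \Def(\sX)$ by Proposition \ref{prop:equivalence}, whose hypothesis $\Hom_{\sY}(\Omega_{\sY}, R^1 \tg_* \sO_{\sX}) = 0$ is exactly the assumption of the present proposition applied to the admissible fibration $\usX_0$; (iv) $\Def(\sX) \simeq \Def(\sX \to \widetilde{\sX}) \simeq \Def(\widetilde{\sX})$ by Proposition \ref{prop:two_index_ones}; and (v) $\Def(\widetilde{\sX}) \simeq \Def_{\bQ}(X)$ by Theorem \ref{thm:abramovich_hassett}, since $\widetilde{\sX}$ is the index-one cover of $X$. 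Each link is realized by passing between a stack and its coarse moduli space, so the composite is naturally isomorphic to the functor induced by $F$; concretely, this is the commutative square of deformation functors displayed at the start of this subsection, decomposed along the factorization of its lower edge introduced there.

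For the automorphism bookkeeping I would invoke that $X$ has no infinitesimal automorphisms by Theorem \ref{thm:vanishing_hom_from_cotangent}, that $\sX$ and $\widetilde{\sX}$ have none by Lemma \ref{lem:infinitesmial_automorphisms}, and that arrows of fibrations of stable stacks have no nontrivial automorphisms by the remark following Theorem \ref{thm:tower_equivalence}. Consequently all the deformation categories in the chain are discrete, so the set-level bijections of Propositions \ref{prop:equivalence} and \ref{prop:two_index_ones} are equivalences of categories, and the d\'evissage to general square-zero extensions — already carried out inside those propositions, which are phrased over arbitrary Artinian local $k$-algebras — shows that $F$ induces an equivalence of deformation categories at every $k$-point. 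Hence $F$ is formally \'etale, and being locally of finite presentation it is \'etale.

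The assembly above is essentially formal; the substance lies upstream. The extra links (iv)--(v), and with them Proposition \ref{prop:two_index_ones}, are forced by the fact that the ``relative'' index-one cover $\sX$ of $X$ over $\sY$ governing the fibration deformations is not the ``absolute'' index-one cover $\widetilde{\sX}$ of $X$ governing $\Def_{\bQ}(X)$, and reconciling the two while tracking depths and codimensions is the real work there; likewise, preservation of admissibility under deformation (Lemma \ref{lem:admissible_open}) is what keeps the stacky side under control. The one genuine debt still owed is the vanishing $\Hom_{\sY}(\Omega_{\sY}, R^1 \tg_* \sO_{\sX}) = 0$ for admissible fibrations of stable stacks, which is the subject of the remaining sections: it will be reduced, via Proposition \ref{thm:semi_negative} together with Theorem \ref{thm:vanishing_hom_from_cotangent} applied on $\sY$ and on its coarse space $Y$, to the Bogomolov--Sommese-type vanishing Theorem \ref{thm:vanishing_log_canonical}. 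Within the present proof, the only point requiring genuine care is verifying that the fourfold composite of equivalences is $2$-compatible with $F$ and natural in the Artinian base; that, I expect, will be the main --- though routine --- obstacle.
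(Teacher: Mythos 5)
Your proposal is correct and follows essentially the same route as the paper: the paper's proof is precisely the commutative diagram of deformation functors whose right vertical arrow ($\Def_{\bQ}(\uX)\to\Def_{\bQ}(X)$) is deduced to be an equivalence because all the other arrows are — the top edge by Theorem \ref{thm:tower_equivalence} and Lemma \ref{lem:admissible_open}, the left edge by Proposition \ref{prop:equivalence} (where the assumed vanishing enters), and the bottom edge factored through $\Def(\sX\to\widetilde{\sX})$ and $\Def(\widetilde{\sX})$ via Proposition \ref{prop:two_index_ones} and Theorem \ref{thm:abramovich_hassett}. Your additional remarks on formal \'etaleness and the absence of infinitesimal automorphisms only make explicit what the paper leaves implicit.
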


\begin{proof}
Let $\uX=(X \to Y \to \Spec k)$ be a fibration of stable varieties as in \eqref{eq:fibration_of_stable_schemes}, and let $\usX=(\sX \to \sY \to \Spec k)$ be its index-one cover as in Definition \ref{defn:index_one_tower}. Further let $\widetilde{\sX}$ be the index-one cover of $X$ as in Definition \ref{defn:index_one_stack}. We are supposed to prove that the right vertical arrow of the following commutative diagram is an equivalence. However under the assumptions of the proposition all other arrows are equivalences, hence so is the right vertical arrow. 
\begin{equation*}
\xymatrix{
\Def ( \usX) \ar[dd]_{ \parbox{40pt}{\tiny forgetting the middle level}}^{\hspace{5pt}  \parbox{55pt}{\tiny equivalence by Proposition \ref{prop:equivalence}}} \ar[rrrrrrr]^{\textrm{taking coarse moduli space}}_{\textrm{equivalence by Theorem \ref{thm:tower_equivalence} and Lemma \ref{lem:admissible_open}}} & & & & & & &  \Def_{\bQ}( \uX)  \ar[dd]_{ \parbox{40pt}{\tiny forgetting the middle level}} & 
\\
\\
\Def (\sX) \ar@/_4pc/[rrrrrrr]^{\textrm{taking coarse moduli space}} &&  \Def(  \sX \to \widetilde{\sX} ) 
\ar[ll]_>>{\hspace{50pt}\textrm{equivalence by}}^>>{\hspace{50pt} \textrm{Proposition \ref{prop:two_index_ones}}} 
\ar[rr]^{\quad\textrm{equivalence by}}_{\quad \textrm{Proposition \ref{prop:two_index_ones}}} 
&  &  \Def(\widetilde{\sX}) \ar[rrr]^{ \textrm{taking coarse moduli space}}_{\textrm{equivalence by Theorem \ref{thm:abramovich_hassett}}} & &  &   \Def_{\bQ}( X ) 
}
\end{equation*}

\end{proof}

\section{Vanishing and negativity}
\label{sec:vanishing}

Disregarding issues about passing to index-one covers,  by Proposition \ref{prop:conclusion} we need to show a vanishing of $\Hom_Y(\Omega_{Y}, R^1 f_* \sO_{X})=0$ for families of stable varieties $f \colon X \to Y$ over stable bases.  By \cite{Kollar_Kovacs_Log_canonical_singularities_are_Du_Bois,Dubois_Jarraud_Une_propriete_de_commutation_au_changement_de_base_des_images_directes_superieures_du_faisceau_structural}, $R^1 f_* \sO_X$ is known to be a vector bundle. Hence, our approach is to show in this section that $R^1 f_* \sO_{X}$ is anti-nef, and then that $\Hom_Y(\Omega_{Y}, \sE)=0$ for every anti-nef vector bundle $\sE$. Recall that a vector bundle $\sE$ is anti-nef, if its dual $\sE^*$ is nef. For a general reference on ample vector bundles see \cite{Lazarsfeld_Positivity_in_algebraic_geometry_II}.  First the negativity statement:

\begin{prop}
\label{thm:semi_negative}
If $f\colon X \to Y$ is a flat, projective family of connected slc schemes with $\omega_{X/Y}$ relatively ample (as a $\bQ$-line bundle),  then $R^{1} f_* \sO_X$ is an anti-nef vector bundle or equivalently $R^{-1} f_* \omega_{X/Y}^{\bullet}$ is a nef vector bundle.  
\end{prop}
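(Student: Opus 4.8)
\emph{Strategy and Step 1 (the two formulations agree).} The plan is to reduce, by two formal steps, to a semipositivity statement over a curve. First, since the fibers of $f$ are slc they are Du Bois by \cite{Kollar_Kovacs_Log_canonical_singularities_are_Du_Bois}; hence by \cite{Dubois_Jarraud_Une_propriete_de_commutation_au_changement_de_base_des_images_directes_superieures_du_faisceau_structural} each $R^i f_*\sO_X$ is locally free and its formation commutes with arbitrary base change. Grothendieck duality for the proper morphism $f$ gives $\sRHom_Y\!\left(Rf_*\sO_X,\sO_Y\right)\cong Rf_*\omega_{X/Y}^{\bullet}$, and since the $R^i f_*\sO_X$ are locally free the spectral sequence computing the cohomology sheaves of the left-hand side degenerates, so $R^{-i}f_*\omega_{X/Y}^{\bullet}\cong\left(R^i f_*\sO_X\right)^{\vee}$ for all $i$. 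In particular $R^{-1}f_*\omega_{X/Y}^{\bullet}$ is nef exactly when $R^1 f_*\sO_X$ is anti-nef, and I will prove the latter.

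\emph{Step 2 (reduction to a smooth projective curve).} Anti-nefness of a vector bundle on a projective variety can be tested after pulling back along every morphism $\tau\colon C\to Y$ from a smooth projective curve (cf. \cite{Lazarsfeld_Positivity_in_algebraic_geometry_II}). By the base-change compatibility of Step 1, $\tau^*R^1 f_*\sO_X\cong R^1(f_C)_*\sO_{X_C}$, where $f_C\colon X_C:=X\times_Y C\to C$ is again flat and projective with connected slc fibers, and $\omega_{X_C/C}$ is still relatively ample (relative ampleness of a $\bQ$-line bundle being preserved by base change). Thus it is enough to treat $Y=C$ a smooth projective curve, i.e. to show that the bundle $\sE:=R^1 f_*\sO_X$ on $C$ has nef dual $\sE^{\vee}\cong R^{-1}f_*\omega_{X/C}^{\bullet}$.

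\emph{Step 3 (the curve case, and the main obstacle).} This is a semipositivity statement for a family with Du Bois fibers, which I would obtain from the semipositivity theorems of \cite{Fujino_Semi_positivity_theorems_for_moduli_problems} (the same source invoked for Lemma \ref{lem:nef}): the Du Bois property of slc singularities is precisely what replaces the smoothness or Cohen--Macaulayness assumptions appearing in the classical statements, while the relative ampleness of $\omega_{X/C}$ supplies the relative polarization these theorems require; with these inputs $R^{-1}f_*\omega_{X/C}^{\bullet}$, which arises as one of the Hodge-theoretic direct-image sheaves attached to the family, is nef over $C$. This step is the genuinely hard one: the reductions in Steps 1 and 2 are formal, whereas semipositivity for a family whose fibers may be reducible, non-normal, and non-Cohen--Macaulay is the real technical content, which I would rather cite than reprove.

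\emph{A more hands-on route for Step 3.} Alternatively one could argue from classical inputs: normalize $X$ and use the conductor (Milnor) short exact sequence relating $\sO_X$ to the structure sheaves of the normalization and of the conductor loci, inducting on the dimension of the non-normal locus to reduce to the case where $X$ is normal with log canonical singularities; then, after a log resolution, push $Rf_*\omega^{\bullet}$ forward and decompose it into the Hodge pieces of the mixed Hodge structure on the cohomology of the fibers — whose nefness over a curve is the classical Koll\'ar--Fujita semipositivity — using the Du Bois comparison quasi-isomorphism $\sO_X\xrightarrow{\ \sim\ }\Om^0_X$ to control the gap between $X$ and its resolution.
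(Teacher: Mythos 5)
Your Steps 1 and 2 are sound and coincide with the paper's own reduction: local freeness and base-change compatibility of $R^if_*\sO_X$ come from \cite[Theorem 7.8]{Kollar_Kovacs_Log_canonical_singularities_are_Du_Bois}, duality identifies $R^{-1}f_*\omega_{X/Y}^{\bullet}$ with $(R^1f_*\sO_X)^{\vee}$, and one reduces to $Y$ a smooth projective curve. The problem is Step 3, which is the entire content of the proposition and which you resolve by citation rather than argument. It is not clear that \cite{Fujino_Semi_positivity_theorems_for_moduli_problems} gives what you need: the semipositivity theorems there concern $f_*\omega_{X/Y}^{[m]}$ and sheaves of the form $R^if_*\omega_{X/Y}(\Delta)$ for (simple normal crossing or stable) families, and for fibers that are not Cohen--Macaulay the sheaf $R^{n-1}f_*\omega_{X/Y}$ is \emph{not} the same as $R^{-1}f_*\omega_{X/Y}^{\bullet}$ --- the latter is a hypercohomology sheaf of the full dualizing complex. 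So the key case (reducible, non-normal, non-CM fibers) is exactly the one your citation does not visibly cover, and your alternative ``hands-on route'' (conductor sequences, log resolutions, Hodge pieces) is a strategy sketch with several unverified compatibilities, not a proof.

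The paper closes this gap by a short induction on $\dim X$ over the curve base. Base case $\dim X=2$: the fibers are curves, so $X$ is Cohen--Macaulay and Kleiman's relative duality gives $(R^1f_*\sO_X)^*\cong f_*\omega_{X/Y}$, which is nef by \cite[Theorem 4.12]{Kollar_Projectivity_of_complete_moduli}. Inductive step: take a general sufficiently ample hyperplane section $H\subseteq X$ with induced family $g\colon H\to Y$; since each fiber is $S_2$, $\omega^{\bullet}_{X_y}$ lives in degrees $\le -1$, so Serre vanishing plus duality gives $H^1(X_y,\sO_{X_y}(-H_y))=0$ for all $y$ (after increasing $H$), hence $R^1f_*\sO_X(-H)=0$ and $R^1f_*\sO_X$ injects into $R^1g_*\sO_H$, which is anti-nef by induction. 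You are missing this (or some substitute) idea; as written, the proposal assumes the theorem's essential content.
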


\begin{proof}
By \cite[Theorem 7.8]{Kollar_Kovacs_Log_canonical_singularities_are_Du_Bois}, $R^i f_* \sO_X$ is locally free and compatible with base change. Hence, we may assume that $Y$ is a smooth projective curve over $k$. We prove the statement by induction on $\dim X$. If $\dim X=2$, then by \cite[Theorem 21]{Kleiman_Relative_duality_for_quasicoherent_sheaves}, using that $X$ is Cohen-Macaulay by the dimension assumption, $(R^1 f_* \sO_X)^* \cong f_* \omega_{X/Y}$. However, the latter is nef by \cite[Theorem 4.12]{Kollar_Projectivity_of_complete_moduli}. 

If $\dim X >2$, then choose an ample enough hyperplane section $H$ of $X$. Let $g: H \to Y$ be the induced morphism. Since every fiber of $f$ is $S_2$, $\omega_{X_y}^{\bullet}$ is supported in cohomological degrees smaller than $-1$ for every $y \in Y$ \cite[Proposition 3.3.6]{Patakfalvi_Base_change_behavior_of_the_relative_canonical_sheaf}. Hence for a fixed $y \in Y$, $H^{-1}(X_y, (\omega_{X_y}^{\bullet}(H_y))=0$ by Serre-vanishing.  However then by Grothendieck duality, 
\begin{equation*}
H^1(X_y, \sO_{X_y}(-H_y)) = H^{-1}(X_y, (\omega_{X_y}^{\bullet}(H_y))^* = 0 .
\end{equation*}
Now, using flatness of $f$ and the semicontinuity of $\dim_{k(y)} H^1(X_y, \sO_{X_y}(-H_y))$, we obtain that $H^1(X_y, \sO_{X_y}(-H_y)) =0$ for any $y \in U$ where $U$ is a non-empty open set of $Y$. However, then replacing $H$ by an adequate power of itself, we obtain this vanishing also for the finitely many points of $Y \setminus U$. In particular then by cohomology and base change $R^1 f_* \sO_X(-H)=0$. 

Consider then the exact sequence
\begin{equation*}
\xymatrix{
0= R^1 f_* \sO_X(-H) \ar[r] & R^1 f_* \sO_X \ar[r] & R^1 g_* \sO_H  .
}
\end{equation*}
Since $H$ was general, $g$ is also a flat, projective family of connected slc schemes with $\omega_{H/Y}$ relatively ample. Hence by induction $R^1 g_* \sO_H$ is an anti-nef vector bundle. Then by the above exact sequence it follows that so is $R^1 f_* \sO_X$.

\end{proof}

Second, we prove  Theorem \ref{thm:vanishing_hom_from_cotangent}. The proof consists of two main parts. First, in Theorem \ref{thm:vanishing_log_canonical}, we show a generalization of a special case of Bogomolov Sommese vanishing for log-canonical spaces \cite[Theorem 7.2]{Greb_Kebekus_Kovacs_Peternell_Differential_forms_on_log_canonical_spaces}. In particular, Theorem \ref{thm:vanishing_log_canonical} implies Theorem \ref{thm:vanishing_hom_from_cotangent} when $X$ is irreducible. The second ingredient is Lemma \ref{lem:vector_fields} that allows us to conclude the reducible case using Theorem \ref{thm:vanishing_log_canonical}. Theorem \ref{thm:vanishing_log_canonical} uses the notation of reflexive tensor products (i.e., $[ \otimes ]$), reflexive differentials and (reflexive) $\bQ$-line bundles. We refer to Section \ref{subsec:notations} for the precise definitions.

Recall that the statement of Theorem \ref{thm:vanishing_log_canonical} is:

\begin{thm_vanishing_log_canonical}
If $X$ is a  projective variety of dimension $n$, $D \geq 0$ a $\bQ$-divisor on $X$
such that $(X,D)$ is log canonical, $\sL$ an anti-ample $\bQ$-line bundle, $\sE$ an
anti nef vector bundle, then
\begin{equation*}
 H^0(X, \Omega^{[n-1]}_X (\log \lfloor D \rfloor )  [\otimes] \sL \otimes
\sE ) = 0 .
\end{equation*}
\end{thm_vanishing_log_canonical}

%


\begin{proof}[Proof of Theorem \ref{thm:vanishing_log_canonical}]
First, we show that we may assume that $\sL$ is a line bundle. Choose an integer
$N>0$, so that $\sL^{[-N]}$ is a very ample line bundle, and a general section $s
\in \sL^{[-N]}$. Let $\tau \colon X' \to X$ be the $N$-degree cyclic cover of $X$
given by $\sL^{*}$ and $s$. In other words
\begin{equation*}
X' := \Spec_X \left( \bigoplus_{i=0}^{N-1} \sL^{[i]} \right),
\end{equation*}
where the algebra structure is given by the natural tensor operations and the
section $s$. Define $D':= \tau^*(D)$. Note that $\tau$ is ramified over an irreducible divisor $B$ determined by $s$, which avoids the general point of any component of $D$. Hence, by  \cite[Lemma 5.17.2 and Proposition 5.20]{Kollar_Mori_Birational_geometry_of_algebraic_varieties}, $(X',D')$ is log canonical. Furthermore $\sL'=\tau^{[*]} \sL$ is a line bundle.
If we knew the statement of the theorem for $\sL$ being a line bundle, then we
would have
\begin{equation}
\label{eq:vanishing_log_canonical:if_it_was_a_line_bundle}
H^0(X', \Omega^{[n-1]}_{X'} (\log \lfloor D' \rfloor )  \otimes \sL'
\otimes \tau^* \sE ) = 0 
\end{equation}
Let $U \subseteq X$ be the open locus of $X$ where both $X$ and $D +B$ are smooth and define $U':=\tau^{-1}(U)$.
Note first that $\sL$ is a line bundle over $U$, second that $U'$ and
$D'|_{U'}$ are also smooth and third that $\codim_X X \setminus U \geq 2$.
That is, \eqref{eq:vanishing_log_canonical:if_it_was_a_line_bundle} would imply 
\begin{equation}
\label{eq:vanishing_log_canonical:if_it_was_a_line_bundle2}
0 = 
\underbrace{H^0(U', \Omega^{n-1}_{U'} (\log \lfloor D'
\rfloor )  \otimes (\tau|_{U'})^* (\sL \otimes  \sE)
)}_{\textrm{\cite[Proposition 1.11]{Hartshorne_Generalized_divisors_on_Gorenstein_schemes} and
\eqref{eq:vanishing_log_canonical:if_it_was_a_line_bundle}}} 
= 
\underbrace{H^0(U, ((\tau|_{U'})_* \Omega^{n-1}_{U'}  (\log
\lfloor D' \rfloor )) \otimes  \sL|_U \otimes  \sE|_U
)}_{\textrm{projection formula}} .
\end{equation}
Note at this point that since both $D|_U$ and $B|_U$ are smooth, by \cite[Lemma 3.16.a]{Esnault_Viehweg_Lectures_on_vanishing_theorems}
\begin{equation}
\label{eq:vanishing_log_canonical:pullback_log_cotangent}
(\tau|_{U'})^* \Omega_{U}^{n-1} (\log \lfloor D \rfloor + B ) \cong \Omega_{U'}^{n-1} (\log \lfloor D' \rfloor + \tau^*B ) .
\end{equation}
Hence,
\begin{equation}
\label{eq:vanishing_log_canonical:direct_sum}
(\tau|_{U'})_* \Omega_{U'}^{n-1} (\log \lfloor D' \rfloor + \tau^*B ) 
%
%
%
%
%
\cong
\bigoplus_{i=0}^{N-1} \Omega_{U}^{n-1} (\log \lfloor D \rfloor + B ) \otimes \sL|_U^{i}.
\end{equation}
The natural embedding $\Omega_{U'}^{n-1} (\log \lfloor D' \rfloor ) \hookrightarrow \Omega_{U'}^{n-1} (\log \lfloor D' \rfloor + \tau^*B )$ and \eqref{eq:vanishing_log_canonical:direct_sum} yields an embedding 
\begin{equation*}
\iota \colon (\tau|_{U'})_* \Omega_{U'}^{n-1} (\log \lfloor D' \rfloor ) 
\hookrightarrow
\bigoplus_{i=0}^{N-1} \Omega_{U}^{n-1} (\log \lfloor D \rfloor + B ) \otimes \sL|_U^{i}.
\end{equation*}
We claim that 
\begin{equation}
\label{eq:vanishing_log_canonical:image}
\im \iota = \Omega_{U}^{n-1} (\log \lfloor D \rfloor  ) \oplus \left( \bigoplus_{i=1}^{N-1} \Omega_{U}^{n-1} (\log \lfloor D \rfloor + B ) \otimes \sL|_U^{i} \right). 
\end{equation}
Indeed, \eqref{eq:vanishing_log_canonical:image} is a local question, so since $U \cap \Supp B \cap  \Supp \lfloor D \rfloor=\emptyset$ it is enough to prove it over $U \setminus \Supp B$ and $U \setminus \Supp \lfloor D \rfloor$ separately. That is, we may assume that either $B=0$ or $D=0$. In the former case \eqref{eq:vanishing_log_canonical:direct_sum} and in the latter \cite[Lemma 3.16.d]{Esnault_Viehweg_Lectures_on_vanishing_theorems} proves \eqref{eq:vanishing_log_canonical:image}. Therefore, $(\tau|_{U'})_* \Omega^{n-1}_{U'} (\log \lfloor
D' \rfloor )$ has a direct factor isomorphic to $\Omega^{n-1}_U (\log \lfloor D
\rfloor )$. Hence, \eqref{eq:vanishing_log_canonical:if_it_was_a_line_bundle2} implies  that 
\begin{equation*}
 0 = H^0(U,  \Omega^{n-1}_{U} (\log \lfloor D \rfloor ) \otimes  \sL|_U
\otimes  \sE|_U ) 
 =
\underbrace{H^0(X,  \Omega^{[n-1]}_{X} (\log \lfloor D \rfloor ) [\otimes] 
\sL \otimes  \sE )  }_{\textrm{\cite[Proposition 1.11]{Hartshorne_Generalized_divisors_on_Gorenstein_schemes}}} .
\end{equation*}
Therefore, we may assume indeed that $\sL$ is a line bundle. 

Choose now a log-resolution $\pi \colon Y \to X$ of $(X,D)$. Let $\widetilde{D}$ be
the biggest reduced divisor in $\pi^{-1}(\textrm{non-klt locus of $(X,D)$})$. 
Then
\begin{multline*}
 H^0(X, \Omega^{[n-1]}_X (\log \lfloor D \rfloor )  [\otimes] \sL \otimes
\sE ) 
%
%
%
  \cong 
\underbrace{H^0(X, \pi_* \Omega^{n-1}_Y (\log \widetilde{D} )  \otimes
\sL \otimes \sE )}_{\textrm{\cite[Theorem 1.5]{Greb_Kebekus_Kovacs_Peternell_Differential_forms_on_log_canonical_spaces}}}
\\ 
\cong \underbrace{H^0(Y, \Omega^{n-1}_Y (\log \widetilde{D} )  \otimes \pi^*
\sL \otimes \pi^* \sE )}_{\textrm{projection formula}}
%
%
%
%
\cong \underbrace{\Hom_Y( \Omega^{1}_Y (\log \widetilde{D} ),
\omega_Y(\widetilde{D})  \otimes \pi^* \sL \otimes \pi^* \sE
)}_{\textrm{\cite[Exercise II.5.1.b]{Hartshorne_Algebraic_geometry}}} 
\end{multline*}
\emph{Assume now that this group is not zero.} Then there is a non-zero homomorphism
\begin{equation*}
 \phi \colon \Omega^{1}_Y (\log \widetilde{D} ) \to  \omega_Y(\widetilde{D})  \otimes
\pi^* \sL \otimes \pi^* \sE
\end{equation*}
Define $r := \rk (\im \phi)$. Note that $1 \leq r \leq n$. Then
\begin{equation}
\label{eq:vanishing_log_canonical:non-zero}
0 \neq \Hom(\Omega^{r}_Y (\log \widetilde{D}), (\wedge^r (\im \phi))^{**}  ) 
\end{equation}
Define $\sK:=(\wedge^r (\im \phi))^{**} \otimes \omega_Y(\widetilde{D})^*
\otimes \pi^* \sL^*$, and note that since $Y$ is smooth and $\sK$ is reflexive of
rank one, then  $\sK$ is a line bundle \cite[Proposition 1.9]{Hartshorne_Stable_reflexive_sheaves}.
Also note that there is an induced homomorphism $\sK \to \pi^* \wedge^r \sE$,
which is an embedding generically, and hence globally as well since $Y$ is integral. In particular, then $\sK$ is the inverse of a  pseudo-effective line bundle (\cite[Lemma 1.4.1]{Viehweg_Weak_positivity}, using that a weakly positive line bundle is pseudo-effective).  Therefore,
\begin{equation}
\label{eq:vanishing_log_canonical:contradiction}
0 \neq 
\underbrace{\Hom(\Omega^{r}_Y (\log \widetilde{D}), \omega_Y(\widetilde{D})
\otimes \pi^* \sL \otimes \sK )
}_{\textrm{\eqref{eq:vanishing_log_canonical:non-zero} and the definition of
$\sK$}}
%
%
\cong 
\underbrace{H^0(Y, \Omega^{n-r}_Y (\log \widetilde{D}) \otimes \pi^* \sL
\otimes \sK ) }_{\textrm{\cite[Exercise II.5.1.b]{Hartshorne_Algebraic_geometry}}} .
\end{equation}
However, $\pi^* \sL \otimes \sK \cong (\pi^* \sL^* \otimes \sK^*)^*$, and then
it is the dual of a big line bundle tensored with a pseudo-effective line bundle.
Hence, in fact, it is the dual of a big line bundle. But then the last group in
\eqref{eq:vanishing_log_canonical:contradiction} is zero by the Bogomolov
vanishing theorem \cite[Corollary 6.9]{Esnault_Viehweg_Lectures_on_vanishing_theorems}. This is a contradiction. So, our
assumption was false, which concludes our proof.
\end{proof}

The following lemma helps to deduce the non-normal case of Theorem \ref{thm:vanishing_hom_from_cotangent} from Theorem \ref{thm:vanishing_log_canonical}. For the definition of demi-normal please consult Section \ref{subsec:notations}. 

\begin{lem}
\label{lem:vector_fields}
If $X$ is a quasi-projective, equidimensional, demi-normal scheme , and $\pi \colon \oX \to X$ is its normalization
with conductor divisor $D \subseteq X$ and $\oD:= \pi^{-1}(D)_{\red}$, then
there is an inclusion
\begin{equation*}
  \sT_X \hookrightarrow \pi_* \sT_{\oX}(- \log \oD) .
\end{equation*}
(Here $\sT_X:=\sHom_X(\Omega_X, \sO_X)$ and $\sT_{\oX}(- \log \oD):= \sHom_X \left(\Omega_{\oX}^{[1]}( \log \oD), \sO_{\oX} \right)$, where $\Omega_{\oX}^{[1]}( \log \oD)$ is the sheaf of reflexive log-differentials, i.e., the reflexive hull of the sheaf of log-differentials on the normal crossing locus of $(\oX, \oD)$ \cite[2.17]{Greb_Kebekus_Kovacs_Peternell_Differential_forms_on_log_canonical_spaces}.)
\end{lem}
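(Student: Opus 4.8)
The plan is to realize the map as the operation ``restrict a vector field on $X$ to the normalization''. Note first that both sheaves in the statement are $S_2$: $\sT_X=\sHom_X(\Omega_X,\sO_X)$ is $S_2$ because $\sO_X$ is ($X$ being demi-normal, hence $S_2$), and $\pi_*\sT_{\oX}(-\log\oD)$ is $S_2$ because $\sT_{\oX}(-\log\oD)$ is reflexive on the normal scheme $\oX$ and finite pushforwards preserve the $S_d$ condition. In particular both sheaves are torsion free and, being $S_2$, each is the $j_*$-extension of its restriction to any open $U\subseteq X$ with $\codim_X(X\setminus U)\ge 2$. Hence it suffices to produce a natural injection over such a $U$, and then extend uniquely.

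Take $U$ to be the open locus where $X$ is either regular or, étale locally, isomorphic to $\Spec\big(k[u,v]/(uv)\big)\times(\text{regular scheme})$; demi-normality together with the fact that $\oX$ is regular and $\oD$ regular in codimension one (cf.\ \cite[5.2]{Kollar_Singularities_of_the_minimal_model_program}) guarantees $\codim_X(X\setminus U)\ge 2$. Over $U\setminus D$ the morphism $\pi$ is an isomorphism and $\oD=\emptyset$, so $\pi_*\sT_{\oX}(-\log\oD)$ and $\sT_X$ agree canonically there. Near a generic point of a component of $D$, in étale coordinates $X=V(uv)$ (times a regular factor), a $k$-derivation $\delta$ of $\sO_X$ satisfies $u\,\delta(v)+v\,\delta(u)=0$, which forces $\delta(u)\in(u)$ and $\delta(v)\in(v)$ in $\sO_X$; therefore $\delta$ restricts to a derivation on each of the two branches of the normalization, and since $\delta(u)|_{u=0}=0$ and $\delta(v)|_{v=0}=0$ these restrictions are tangent to $\oD$, i.e.\ lie in $\sT_{\oX}(-\log\oD)$. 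These local restriction maps glue to a morphism $\sT_X|_U\to\pi_*\sT_{\oX}(-\log\oD)|_U$, which extends to the desired $\sT_X\to\pi_*\sT_{\oX}(-\log\oD)$ by the $S_2$ remark above. (Coordinate-free alternative: in characteristic zero every $k$-derivation of $\sO_X$ extends uniquely to a $k$-derivation of $\sO_{\oX}$ by a classical theorem of Seidenberg; such an extension preserves the conductor ideal, hence its radical $\sO_{\oX}(-\oD)$, and a vector field on $\oX$ preserving $\sO_{\oX}(-\oD)$ lies in $\sT_{\oX}(-\log\oD)$ over the snc locus of $(\oX,\oD)$, hence everywhere by reflexivity.)

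For injectivity, observe that the kernel of the constructed map is a subsheaf of the torsion-free sheaf $\sT_X$, and that the map is an isomorphism at every generic point of $X$ (there $\sO_X$ is a field, $\pi$ is an isomorphism and $\oD$ is empty); a torsion-free coherent sheaf whose stalks vanish at all generic points is zero, so the kernel vanishes and the map is injective. The main obstacle is the construction in the second paragraph, and within it the bookkeeping that the extended vector field genuinely lands in the \emph{reflexive} log-tangent sheaf $\sT_{\oX}(-\log\oD)$: one must use that the conductor ideal on $\oX$ cuts out the reduced divisor $\oD$ and then pass through the snc locus of $(\oX,\oD)$ and a reflexive-hull argument. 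The remaining ingredients --- the $S_2$/torsion-free reductions, the explicit node computation, and the generic-point injectivity argument --- are routine.
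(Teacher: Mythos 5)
Your proof is correct, but it runs in the opposite direction from the paper's. You work throughout on the tangent side: reduce to the smooth-plus-nodal locus using that both sheaves are $S_2$, extend a derivation of $\sO_X$ to the normalization (Seidenberg, or the explicit computation that $u\,\delta(v)+v\,\delta(u)=0$ forces $\delta(u)\in(u)$ and $\delta(v)\in(v)$ at a node), observe that the extension preserves the ideal of $\oD$ and hence is logarithmic, and get injectivity from torsion-freeness plus the generic isomorphism. The paper instead works on the cotangent side: it identifies $\sT_X$ with $\sHom_X(\Omega_X/\sC,\sO_X)$, where $\sC$ is the torsion submodule of $\Omega_X$, constructs an explicit inclusion $\pi_*\Omega_{\oX}(\log\oD)(-\oD)\hookrightarrow \Omega_X/\sC$ by exhibiting both as submodules of $\Omega_X\otimes_{\sO_X}\sK(X)$ in \'etale coordinates at a node, and then dualizes, using Grothendieck duality for the finite morphism $\pi$ together with $\omega_{\oX/X}\cong\sO_{\oX}(-\oD)$ to recognize $\sHom_X\left(\pi_*\Omega_{\oX}(\log\oD)(-\oD),\sO_X\right)$ as $\pi_*\sT_{\oX}(-\log\oD)$. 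The two arguments share the $S_2$ reduction and the \'etale-local node computation, but yours avoids Grothendieck duality and the bookkeeping with the torsion of $\Omega_X$ entirely; the twist by $-\oD$, which in the paper comes out of the relative dualizing sheaf, is in your version the statement that the extended derivation preserves the conductor ideal and its radical. The one point to keep honest in a full write-up is the identification of $\sT_{\oX}(-\log\oD)=\sHom_{\oX}(\Omega^{[1]}_{\oX}(\log\oD),\sO_{\oX})$ with the sheaf of derivations preserving the ideal of $\oD$; you flag this yourself, and it is settled exactly as you indicate, by agreement on the snc locus of $(\oX,\oD)$ (whose complement has codimension at least two) together with reflexivity of both sheaves.
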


\begin{proof}
\emph{Step 1: we may assume that $X$ contains only smooth and nodal points.} Let $U$ be the open set of $X$ containing the smooth and double normal crossing
points. Define $\bar{U}:= \pi^{-1}(U)$. Both $\sT_X$ and $\sT_{\oX}(-
\log \oD)$ are reflexive, or equivalently $S_2$, by \cite[Corollary
1.8]{Hartshorne_Generalized_divisors_on_Gorenstein_schemes}. Then so is $\pi_* \sT_{\oX}(- \log \oD)$ by
\cite[Proposition 5.4]{Kollar_Mori_Birational_geometry_of_algebraic_varieties}. So, by \cite[Proposition 1.11]{Hartshorne_Generalized_divisors_on_Gorenstein_schemes} it is
enough to prove that there is a natural inclusion
\begin{equation*}
  \sT_U \hookrightarrow \pi_* \sT_{\bar{U}}(- \log \oD)   . 
\end{equation*}
With other words we may assume that $X$ contains only smooth and nodal points. 

\emph{Step 2: if $\sK(X)$ is the sheaf of total quotient rings, the kernel of $\Omega_X \to \Omega_X \otimes_{\sO_X} \sK(X)$ (given by $\eta \mapsto \eta \otimes 1$) is the submodule $\sC$ of sections the supports of which does not contain any component of $X$.} Since $\Omega_X$ is locally free at the generic points of the components, the kernel has to be contained in $\sC$. For the other containment, let $\eta$ be a local section of $\sC$, and $s$ a local section of $\sK(X)^\times \cap \sO_X$ such that $s \cdot \eta =0$. Then $\eta \otimes 1 = \eta \otimes (s \cdot s^{-1}) = s \eta \otimes s^{-1} = 0$. 

\emph{Step 3: $\sT_X \cong \sHom_X \left( \factor{\Omega_X}{\sC}, \sO_X \right)$.} This follows immediately from dualizing the exact sequence
\begin{equation*}
\xymatrix{
0 \ar[r] & \sC \ar[r] & \Omega_X \ar[r] & \factor{\Omega_X}{\sC} \ar[r] & 0
}
\end{equation*}
and noticing that $\sHom_X(\sC, \sO_X)=0$, since $X$ is $S_2$ and hence all the sections of $\sO_X$ are supported on the union of some components.

\emph{Step 4: it is enough to show that there is a natural inclusion $\pi_* \Omega_{\oX}\left(\log \oD \right) \left( - \oD \right) \hookrightarrow \factor{\Omega_X}{\sC}$, which is an isomorphism at the generic point of each component of $X$.} Indeed, by dualizing such an inclusion, we obtain an inclusion $\sT_X \hookrightarrow \sHom_X \left(\pi_* \Omega_{\oX}\left(\log \oD\right) \left( - \oD\right), \sO_X \right)$. Further,
\begin{multline*}
\sHom_X \left(\pi_* \Omega_{\oX}\left(\log \oD\right) \left( - \oD\right), \sO_X \right)
\cong 
\underbrace{\pi_* \sHom_X \left( \Omega_{\oX}\left(\log \oD\right) \left( - \oD\right), \omega_{\oX/X} \right)}_{\textrm{Grothendieck duality}}
\\ \cong 
\underbrace{\pi_* \sHom_X \left( \Omega_{\oX}\left(\log \oD\right) \left( - \oD\right), \sO_{\oX}\left(-\oD\right) \right)}_{\omega_{\oX/X} \cong \sO_{\oX}\left(-\oD\right)}
\cong 
\pi_* \sT_{\oX}\left(- \log \oD\right).
\end{multline*}

\emph{Step 5: showing an inclusion $\pi_* \Omega_{\oX}\left(\log \oD\right) \left( - \oD\right) \hookrightarrow \factor{\Omega_X}{\sC}$ as above.} Note that if $\iota_i : \xi_i \to X$ is the inclusion of the generic points of $X$, then
\begin{equation*}
\Omega_X \otimes_{\sO_X} \sK(X) \cong 
\underbrace{\bigoplus \iota_{i,*} \Omega_{X/k,\xi_i}}_{\textrm{\cite[Prop 2.1]{Hartshorne_Generalized_divisors_on_Gorenstein_schemes}, \cite[Prop 8.2.a]{Hartshorne_Algebraic_geometry}}}
 \cong \pi_* \left( \Omega_{\oX}(\log \oD) (-\oD) \otimes_{\sO_{\oX}} \sK(\oX) \right) .
\end{equation*}
So, in particular, both  $\pi_* \Omega_{\oX}\left(\log \oD\right) \left( - \oD\right)$ and $\factor{\Omega_X}{\sC}$ are both subsheaves of $\Omega_X \otimes_{\sO_X} \sK(X)$.  We verify that the first is a subsheaf of the second via these embeddings. Indeed, this is immediate at smooth points, because there they are equal. So, we may look at only the nodal points. Then after passing to an \'etale cover we may assume that we have a simple normal crossing point, that is,
\begin{equation*}
X=\Spec \frac{A[x,y]}{(xy)} \textrm{, where } A:= k[z_1,\dots,z_n] \textrm{, and }
\oX= \Spec \left( A[x] \oplus A[y] \right) .
\end{equation*}
Note that the embedding $\frac{A[x,y]}{(xy)} \hookrightarrow A[x] \oplus A[y]$ is the unique $A$-algebra homomorphism sending $x \mapsto x$ and $y \mapsto y$.
In this situation, $\sK(X)$ corresponds to the ring $A(x) \oplus A(y)$ viewed as an $\frac{A[x,y]}{(xy)}$-module, and $\Omega_X \otimes_{\sO_X} \sK(X)$ corresponds to the following  $A(x) \oplus A(y)$-module viewed as an $\frac{A[x,y]}{(xy)}$-module.
\begin{equation*}
B:=(A(x) dx \oplus A(y) dy) \oplus (A(x) \oplus A(y)) dz_1 \oplus \dots \oplus (A(x) \oplus A(y)) dz_n
\end{equation*}
Further, $\Omega_X$ corresponds to the $\frac{A[x,y]}{(xy)}$-module
\begin{equation*}
\frac{\frac{A[x,y]}{(xy)} dx \oplus \frac{A[x,y]}{(xy)} dy \oplus \frac{A[x,y]}{(xy)} dz_1 \oplus \dots \oplus \frac{A[x,y]}{(xy)} dz_n}{xdy + ydx},
\end{equation*}
where $\sC$ is the submodule generated by $xdy$. Consequently $\factor{\Omega_X}{\sC}$ corresponds to the following $\frac{A[x,y]}{(xy)}$-submodule of $B$. 
\begin{equation}
\label{eq:submodule_1}
A[x]  dx \oplus A[y] dy \oplus \frac{A[x,y]}{(xy)} dz_1 \oplus \dots \oplus \frac{A[x,y]}{(xy)} dz_n.
\end{equation}
 On the other hand, $\pi_* \Omega_{\oX}(\log \oD) ( - \oD)$ corresponds to the submodule
\begin{equation}
\label{eq:submodule_2}
A[x] dx \oplus A[y] dy \oplus (xA[x] \oplus yA[y]) dz_1 \oplus \dots \oplus (xA[x] \oplus yA[y]) dz_n.
\end{equation}
Since $(xA[x] \oplus yA[y])$ is a subring of $\frac{A[x,y]}{(xy)}$ when the latter is viewed embedded into $A[x] \oplus A[y]$,  submodule \eqref{eq:submodule_2} is indeed contained in submodule \eqref{eq:submodule_1}.
\end{proof}

\begin{proof}[Proof of Theorem  \ref{thm:vanishing_hom_from_cotangent}]
First, \emph{we claim that it is enough to show that $\Hom_X(\Omega_X, \sE)=0$}. Indeed, there is an exact triangle
\begin{equation*}
\xymatrix{
 \bL_X^{\leq -1} \ar[r] & \bL_X \ar[r] & \Omega_X \ar[r]^{+1} &  .
}
\end{equation*}
Hence applying
$\Hom(\_, \sE)$ gives the exact sequence
\begin{equation*}
\xymatrix{
\Hom(\Omega_X, \sE) \ar[r] & \Hom(\bL_X, \sE) \ar[r] & \Hom(\bL_X^{\leq -1}, \sE),
}
\end{equation*}
where the last term is zero, since $\bL_X^{\leq -1}$ is supported in negative cohomological degrees, while $\sE$ in zero cohomological degrees (recall that $\Hom (\_, \sE)$ is computed by $h^0(\Hom^{\bullet}(\_,\sI))$, where $\sI$ is an injective resolution  of $ \sE$, and then since $\bL_X^{\leq -1}$ is supported in negative cohomological degrees, $\Hom^{\bullet}(\bL_X^{\leq -1},\sI)=0$ holds).
This concludes our claim.

Now we show that $\Hom_X(\Omega_X, \sE)=0$. Let $\pi \colon \oX \to X$ be the normalization of $X$ with conductor divisor $D
\subseteq X$ and $\oD:= \pi^{-1}(D)_{\red}$. Then there is an inclusion 
\begin{multline*}
 \Hom_X(\Omega_X, \sE) 
\cong \underbrace{ H^0(X,\sT_X \otimes \sE)}_{\textrm{$\sE$ is locally free}}
\hookrightarrow \underbrace{ H^0(X,\pi_* \sT_{\oX}(- \log \oD) \otimes
\sE)}_{\textrm{Lemma \ref{lem:vector_fields}}} 
 \cong \\ \underbrace{ H^0(\oX,\sT_{\oX}(- \log \oD) \otimes \pi^*
\sE) }_{\textrm{projection formula}} 
 \cong \underbrace{ H^0(\oX,\Omega_{\oX}^{[n-1]}( \log \oD)
[\otimes] \omega_{\oX}(\oD)^* \otimes \pi^* \sE) }_{\textrm{wedge
pairing isomorphism}} 
.
\end{multline*}
Hence it is enough to prove that the last group is zero. However, that follows
from  Theorem \ref{thm:vanishing_log_canonical} by setting 
$\sL:=\omega_{\oX}(\oD)^*$, which is anti-ample by \cite[(5.7.1)]{Kollar_Singularities_of_the_minimal_model_program}. 
\end{proof}

\section{Proof of the main theorem}
\label{sec:main_theorem}

In this section we prove Theorem \ref{thm:main}.

\begin{lem}
\label{lem:vanishing_from_schemes_to_stacks}
Given a fibration $(X \to Y \to B)$ of stable varieties as in \eqref{eq:fibration_of_stable_schemes}, and its corresponding index-one fibration $(\sX \to \sY \to B)$ of stable stacks as in Definition \ref{defn:index_one_tower}, $\Hom_{\sY} (\Omega_{\sY},R^1 \tg_* \sO_{\sX})=0$.
\end{lem}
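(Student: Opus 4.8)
The plan is to descend the statement to the scheme–level vanishing already proved in Section~\ref{sec:vanishing}, by passing through the coarse moduli morphism $\pi\colon\sY\to Y$ in the spirit of the proof of Lemma~\ref{lem:infinitesmial_automorphisms}. It suffices to treat the case $B=\Spec k$, which is the one needed for Theorem~\ref{thm:main} via Proposition~\ref{prop:conclusion}; then $Y$ is a stable variety and $\sY$ is its index–one covering stack.

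\emph{First}, I would identify $R^1\tg_*\sO_{\sX}$. Put $\sE:=R^1 g_*\sO_X$; by \cite{Kollar_Kovacs_Log_canonical_singularities_are_Du_Bois} all the sheaves $R^i g_*\sO_X$ are locally free on $Y$, and $\sE$ is anti-nef by Proposition~\ref{thm:semi_negative}. By Definition~\ref{defn:index_one_tower} we have $\sX=\sX'\times_Y\sY$, where $\sX':=[\Spec_X(\bigoplus_{m\in\bZ}\omega_{X/Y}^{[m]})/\bG_m]$ is the relative index–one covering stack of $g$; by Theorem~\ref{thm:abramovich_hassett} the structure morphism $\ug\colon\sX'\to Y$ is flat, hence so is its base change $\tg\colon\sX\to\sY$, and $\sX\to\sX'$ is the base change of $\pi$. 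Flat (tor-independent) base change then gives $R\tg_*\sO_{\sX}\cong L\pi^*R\ug_*\sO_{\sX'}$, while $R\ug_*\sO_{\sX'}\cong Rg_*\sO_X$ because $\eta\colon\sX'\to X$ is a coarse moduli morphism, so $R\eta_*\sO_{\sX'}\cong\sO_X$ in characteristic zero (cf.\ Lemma~\ref{lem:pi_is_a_coarse_moduli map}). Since the cohomology sheaves $R^ig_*\sO_X$ are locally free, taking $\mathcal{H}^1$ of $L\pi^*(Rg_*\sO_X)$ yields $R^1\tg_*\sO_{\sX}\cong\pi^*\sE$, a locally free sheaf on $\sY$.

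\emph{Second}, I would run the cotangent–complex argument to get $\Hom_{\sY}(\Omega_{\sY},\pi^*\sE)=0$; in fact I would prove the sharper $\Hom_{\sY}(\bL_{\sY},\pi^*\sE)=0$. Being the coarse moduli morphism of a stable stack, $\pi$ is an isomorphism in codimension one (Definition~\ref{defn:stable_stack}), so $\Omega_{\sY/Y}$ is a torsion sheaf supported in codimension $\geq2$; as $\pi^*\sE$ is locally free, hence $S_2$, on the $S_2$ stack $\sY$, a homomorphism $\Omega_{\sY/Y}\to\pi^*\sE$ must vanish, and then the truncation triangle $\tau_{\leq-1}\bL_{\sY/Y}\to\bL_{\sY/Y}\to\Omega_{\sY/Y}$ (together with the degree reason $\Hom_{\sY}(\tau_{\leq-1}\bL_{\sY/Y},\pi^*\sE)=0$) forces $\Hom_{\sY}(\bL_{\sY/Y},\pi^*\sE)=0$. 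Plugging this into the transitivity triangle $L\pi^*\bL_Y\to\bL_{\sY}\to\bL_{\sY/Y}$ and applying $\Hom_{\sY}(-,\pi^*\sE)$ produces an injection $\Hom_{\sY}(\bL_{\sY},\pi^*\sE)\hookrightarrow\Hom_{\sY}(L\pi^*\bL_Y,\pi^*\sE)$. By adjunction the target is $\Hom_Y(\bL_Y,R\pi_*(\pi^*\sE))\cong\Hom_Y(\bL_Y,\sE)$, using the projection formula and $R\pi_*\sO_{\sY}\cong\sO_Y$; and this vanishes by Theorem~\ref{thm:vanishing_hom_from_cotangent}, since $Y$ is a stable variety and $\sE$ an anti-nef vector bundle. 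Thus $\Hom_{\sY}(\bL_{\sY},\pi^*\sE)=0$, and one last truncation triangle $\tau_{\leq-1}\bL_{\sY}\to\bL_{\sY}\to\Omega_{\sY}$ gives $\Hom_{\sY}(\Omega_{\sY},\pi^*\sE)\hookrightarrow\Hom_{\sY}(\bL_{\sY},\pi^*\sE)=0$. Combined with Step~1 this is the assertion.

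The hard part will be Step~1: the identification $R^1\tg_*\sO_{\sX}\cong\pi^*R^1g_*\sO_X$ has to be run through tor-independent (derived) base change along $\pi$, because $\pi$ is not flat — this is legitimate precisely because $\tg$ (equivalently $\ug$) is flat — and one then needs the local freeness of the $R^ig_*\sO_X$ to commute $L\pi^*$ with taking $\mathcal{H}^1$, as well as the characteristic–zero facts $R\eta_*\sO_{\sX'}\cong\sO_X$ and $R\pi_*\sO_{\sY}\cong\sO_Y$ for coarse moduli morphisms. Once these are in hand, Step~2 is a routine adaptation of the argument of Lemma~\ref{lem:infinitesmial_automorphisms}.
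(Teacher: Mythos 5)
Your proof is correct, and its overall skeleton matches the paper's: both arguments reduce the stacky vanishing to the scheme-level statement $\Hom_Y(\Omega_Y,R^1g_*\sO_X)=0$ (Proposition \ref{thm:semi_negative} plus Theorem \ref{thm:vanishing_hom_from_cotangent}) by adjunction along the coarse moduli map $\pi$, and both dispose of the correction term $\Omega_{\sY/Y}$ by noting that it is supported in codimension $\geq 2$ while $R^1\tg_*\sO_{\sX}$ is locally free. The genuine difference is how that local freeness is obtained. The paper works on the target side: it factors $\tg$ through $X\times_Y\sY$, shows $R\rho_*\sO_{\sX}\cong\sO_{X\times_Y\sY}$ (Remark \ref{rem:coarse_tower_with_proj}), and applies \cite[Theorem 7.8]{Kollar_Kovacs_Log_canonical_singularities_are_Du_Bois} to the stable family $X\times_Y\sY\to\sY$ on \'etale charts. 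You work on the source side: since $\sX$ is the base change along $\pi$ of the flat relative index-one cover $\ug\colon\bigl[\Spec_X\bigl(\bigoplus_m\omega_{X/Y}^{[m]}\bigr)/\bG_m\bigr]\to Y$, tor-independent base change gives $R\tg_*\sO_{\sX}\cong L\pi^*Rg_*\sO_X$ and hence the sharper identification $R^1\tg_*\sO_{\sX}\cong\pi^*R^1g_*\sO_X$ (using that all $R^ig_*\sO_X$ are locally free). That buys a cleaner adjunction step, via $R\pi_*\pi^*\sE\cong\sE$ and the projection formula, in place of the paper's computation $\pi_*R^1\tg_*\sO_{\sX}\cong R^1g_*\sO_X$ through $\gamma$; the cost is invoking derived base change along the non-flat $\pi$ for DM stacks, which the paper's route avoids at the price of the Proj description in Remark \ref{rem:coarse_tower_with_proj}. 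Your restriction to $B=\Spec k$ is harmless: it is the only case consumed by Proposition \ref{prop:conclusion}, and the paper's own appeal to Theorem \ref{thm:vanishing_hom_from_cotangent} implicitly makes the same restriction. Running the truncation argument with $\bL_{\sY}$ rather than directly with $\Omega$'s is a cosmetic difference.
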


\begin{proof}
By Lemma \ref{lem:pi_is_a_coarse_moduli map}, the coarse moduli tower of  $(\sX \to \sY \to B)$   is $(X \to Y \to B)$. So, we use the notations of \eqref{eq:tower_of_stable_stacks_coarse_maps}, which we recall here:
\begin{equation*}
\begin{split}
\xymatrix{
    \sX =  \sX_2 \ar@/^2pc/[rr]^{\tilde{f}}  \ar[r]^{\tg=\tf_{2}} \ar[d]^{\gamma} & \sY= \sX_{1} \ar[r]^{\tf_{1}} \ar[d]^{\pi} &
 \sX_0 = B \ar@{=}[d] \\
     X= X_2 \ar@/_2pc/[rr]^{f}  \ar[r]^{g=f_{2}} & Y= X_{1} \ar[r]^{f_1} &
 X_0 = B
} ,
\end{split}
\end{equation*} 
By Proposition \ref{thm:semi_negative}, $R^1 g_* \sO_{X}$ is a weakly negative vector bundle. Then by Theorem \ref{thm:vanishing_hom_from_cotangent}, $\Hom_{Y}(\Omega_{Y}, R^1 g_* \sO_{X})=0$. However,  
\begin{equation}
\label{eq:vanishing_from_schemes_to_stacks:structure_sheaves}
R^1 g_*  \sO_{X}
\cong
\underbrace{R^1 g_* \gamma_* \sO_{\sX}}_{\parbox{85pt}{\tiny $\gamma_* \sO_{\sX} \cong \sO_{X}$, since $\gamma$ is a coarse moduli map}} 
\cong 
\underbrace{\pi_* R^1 \tg_*  \sO_{\sX}}_{\parbox{95pt}{\tiny $\pi_*$ is exact, since $\pi$ is a coarse moduli map}} ,
\end{equation}
and hence
\begin{equation}
\label{eq:vanishing_from_schemes_to_stacks:vanishing_pullback}
0 =  
\underbrace{\Hom_{Y}(\Omega_{Y}, \pi_* R^1 \tg_*  \sO_{\sX})}_{\textrm{by \eqref{eq:vanishing_from_schemes_to_stacks:structure_sheaves}}}
\cong
\underbrace{\Hom_{\sY}(L \pi^* \Omega_{Y},  R^1 \tg_*  \sO_{\sX})}_{\textrm{by adjunction}} 
\\ \cong 
\underbrace{\Hom_{\sY}(\pi^* \Omega_{Y},  R^1 \tg_*  \sO_{\sX})}_{\textrm{by cohomological degrees}} 
.
\end{equation}
Consider now the triangle 
\begin{equation}
\label{eq:vanishing_from_schemes_to_stacks:Omega_exact_triangle}
\xymatrix{
\pi^* \Omega_{Y} \ar[r] & \Omega_{\sY} \ar[r] & \Omega_{\sY/Y} \ar[r]^{+1} & .
}
\end{equation}
By \eqref{eq:vanishing_from_schemes_to_stacks:vanishing_pullback} and \eqref{eq:vanishing_from_schemes_to_stacks:Omega_exact_triangle}, it is enough to prove that $\Hom_{\sY}(\Omega_{\sY/Y},R^1 \tg_* \sO_{\sX})=0$. 
Since $\sY \to Y$ is isomorphism in codimension one, $\Omega_{\sY/Y}$ is supported on a codimension two closed set. Hence it is enough to prove that $R^1 \tg_* \sO_{\sX}$ is locally free. At this point, we are going to use the notations of Remark \ref{rem:coarse_tower_with_proj}. By Remark \ref{rem:coarse_tower_with_proj}, $R \rho_* \sO_{\sX} \cong \sO_{\sX'}$, where $\sX':=X \times_{Y} \sY$ and $\rho: \sX \to \sX'$ is the induced morphism. Denote by $g'$ the natural morphism $\sX' \to \sY$. Then 
\begin{equation*}
R^1 \tg_* \sO_{\sX} \cong R^1 (g' \circ \rho)_* \sO_{\sX} \cong h^1(R g'_*  R \rho_* \sO_{\sX}) \cong h^1(R g'_*  \sO_{\sX'}) \cong R^1 g'_* \sO_{\sX'}
\end{equation*}
However $g'$ is a family of stable schemes, so $R^1 g'_* \sO_{\sX'}$ is locally free by Theorem \cite[Theorem 7.8]{Kollar_Kovacs_Log_canonical_singularities_are_Du_Bois} (The base of $\sX' \to \sY$ is a DM-stack, so one has to be slightly careful when applying  \cite[Theorem 7.8]{Kollar_Kovacs_Log_canonical_singularities_are_Du_Bois}. Note that it is enough to prove that the  pullback of $R^1 g'_* \sO_{\sX'}$ to an \'etale cover $\zeta \colon Z \to \sY$ of $\sY$ by a scheme is locally free (in \'etale topology which follows from showing it in Zariski topology). However, $\zeta^*  R^1 g'_* \sO_{X'} \cong R^1 g_{Z,*} \sO_{X_Z}$, so over $Z$ \cite[Theorem 7.8]{Kollar_Kovacs_Log_canonical_singularities_are_Du_Bois} applies directly.)

\end{proof}

\begin{proof}[Proof of point \eqref{itm:vague:non_Q_Gorenstein} of Theorem \ref{thm:vague}]
 It follows from \cite[Propositions 3.9 and 3.10]{Bhatt_Ho_Patakfalvi_Schnell_Moduli_of_products_of_stable_varieties}, Theorem \ref{thm:vanishing_hom_from_cotangent} and Proposition \ref{thm:semi_negative}.
\end{proof}

\begin{proof}[Proof of Theorem \ref{thm:main} and equivalently of point \eqref{itm:vague:Q_Gorenstein} of Theorem \ref{thm:vague}]
It follows from Lemma \ref{lem:vanishing_from_schemes_to_stacks} and Proposition \ref{prop:conclusion}.
\end{proof}

\begin{rem}
Let us note that iterating the results of the paper one can obtain similar results to towers. We word these precisely here. Let a \emph{tower of stable varieties with Hilbert function vector $\uh=(h_1,\dots,h_n)$} over a base scheme $B$   ve a commutative diagram
\begin{equation}
\label{eq:tower_of_stable_schemes}
\xymatrix{
    X= X_n \ar@/^2pc/[rrrr]^f  \ar[r]_{f_{n}} & X_{n-1} \ar[r]_{f_{n-1}} & \dots 
\ar[r]_{f_{2}} & X_1 \ar[r]_{f_{1}} & X_0 = B
} ,
\end{equation}
such that $f_i$ is a family of stable varieties (satisfying Koll\'ar's condition), and $\chi \left(  \omega_{(X_i)_y}^{[m]} \right) = h_i(m)$ for every $m \in \bZ$, $1 \leq i \leq n$ and $y \in X_{i-1}$. Define the category fibered in groupoids $\fT \fM_{\uh}$ over $\Sch_k$ to have such towers as objects over $B$, and natural Cartesian pullbacks as morphisms. For a vector of integers $\um=(m_1,\dots, m_n)$ define also the category  of all towers with dimension vector $\um$ as follows.
\begin{equation*}
\fT \fM_{\um}:= \bigcup_{\uh=(h_1,\dots,h_n), \deg h_i = m_i} \fT \fM_{\uh}
\end{equation*}
By induction on $n$, $\fT \fM_{\um}$ is a DM-stack locally of finte type over $k$ (c.f. Proposition \ref{prop:stack_isomorphism}). Let $F \colon \fT \fM_{\um} \to \ofM_m$ denote the forgetful functor obtained by disregarding the middle levels of a tower (here $m= \sum m_i$). Then iterated use of Theorem \ref{thm:vague} yields that the forgetful functor $F \colon \fT \fM_{\um} \to \ofM_m$ is \'etale.

\end{rem}

\bibliographystyle{skalpha}
\bibliography{includeNice}

\end{document}